
\documentclass[12pt,leqno]{article}

\usepackage{fancyhdr}
\fancyhf{}

\fancyhead[CE]{\small\scshape\shorttitle}
\fancyhead[CO]{\small\scshape\authors}
\pagestyle{fancy}

\fancyhf{}

\fancyhead[C]{%
\ifodd\value{page}
  \small\scshape\authors
\else
  \small\scshape\shorttitle
\fi
}

\usepackage[english]{babel}
\usepackage{amsmath,amsfonts,amssymb,amsthm}
\usepackage{graphicx}
\usepackage{amscd}
\usepackage{url}
\usepackage{vmargin}
\usepackage{mathrsfs}
\usepackage{color}

\definecolor{Green}{rgb}{0.20,0.43,0.09}

\usepackage{pdfpages}
\usepackage[bookmarksopen=false,breaklinks=true,  backref=page,pagebackref=true,plainpages=false,%
    hyperindex=true,pdfstartview=FitH,colorlinks=true, pdfpagelabels=true,colorlinks=true,linkcolor=blue,citecolor=red]%
   {hyperref}
\hypersetup{
    colorlinks=true,
    linkcolor=blue,
    citecolor=red,
    filecolor=green,
    urlcolor=blue,
}


\newtheorem{theorem}{Theorem}

\newtheorem{corollary}[theorem]{Corollary}

\newtheorem{definition}[theorem]{Definition}
\newtheorem{temp-def}{Temporary definition}

\newtheorem{example}{Example}

\newtheorem{lemma}[theorem]{Lemma}
\newtheorem{lemma-def}[theorem]{Lemma-Definition}
\newtheorem{sublemma}[theorem]{Sublemma}

\newtheorem{proposition}[theorem]{Proposition}

\newtheorem{remark}[theorem]{Remark}


\def\proof{\smallskip\goodbreak{\it Proof.~~--~\kern.3em}
     \ignorespaces}%
\def\qedbox{$\square$}%
\def\qed{\ifmmode\qedbox\else\unskip\ \hglue0mm\hfill
     \qedbox\smallskip\goodbreak\fi}%

\def\proofof#1{\smallskip\goodbreak{\it Proof of #1.~~--~\kern.3em}
     \ignorespaces}%
\def\qedbox{$\square$}%
\def\qed{\ifmmode\qedbox\else\unskip\ \hglue0mm\hfill
     \qedbox\smallskip\goodbreak\fi}%
%

%
%


\newcommand{\ZZ}{\mathbb{Z}}

\newcommand{\CC}{\mathbb{C}}

\newcommand{\NN}{\mathbb{N}}

\newcommand{\C}{\mathcal{C}}
\newcommand{\D}{\mathcal{D}}
\newcommand{\E}{\mathcal{E}}
\newcommand{\F}{\mathcal{F}}
\newcommand{\G}{\mathcal{G}}
\renewcommand{\H}{\mathcal{H}}

\renewcommand{\O}{\mathcal{O}}



\newcommand{\Cc}{\mathscr{C}}

\newcommand{\Ec}{\mathscr{E}}
\newcommand{\Fc}{\mathscr{F}}
\newcommand{\Gc}{\mathscr{G}}

\newcommand{\Lc}{\mathscr{L}}

 \newcommand{\CA}{\mathcal{A}}

\def\sym{\texttt{sym}}
\def\Sym{\mathrm{Sym}}



\setlength{\arraycolsep}{0.5mm}

\setcounter{tocdepth}{2}

\begin{document}

\newcommand\shorttitle{Galoisian Methods for the Irreducibility of ODE}
\newcommand\authors{G. Casale and J.-A. Weil}
\title{Galoisian Methods for Testing Irreducibility of Order Two Nonlinear Differential Equations}

\author{G. Casale\thanks{IRMAR, Universit\'e de Rennes 1, 35042 RENNES Cedex, France \texttt{guy.casale@univ-rennes1.fr} } 
\& J.-A. Weil\thanks{XLIM,  Universit\'e de Limoges, 123 avenue Albert Thomas, 87060 Limoges Cedex, France \texttt{weil@unilim.fr} }
}

\date{\today }

\maketitle

\begin{abstract}
  The aim of this article is to provide a method to prove the irreducibility of non-linear ordinary differential equations by means of the differential Galois group of their variational equations along algebraic solutions. We show that if the dimension of the Galois group of a variational equation is large enough then the equation must be irreducible. 
 We propose a method to compute this dimension via reduced forms. 
 As an application, we reprove the irreducibility of the second and third Painlev\'e equations for special values of their parameter.
 In the Appendix, we recast the various notions of variational equations found in the literature and prove their equivalences.
\end{abstract}

\tableofcontents

\noindent\textbf{Keywords:} Ordinary Differential Equations, Differential Galois Theory, Painlev\'e Equations, Computer Algebra.

\noindent\textbf{MSC 2010 Classification:} 
34M55, 
34M03, 
34A05, 
34M15, 
34A26, 
34M25, 
20G05, 
17B45.  

\section*{Introduction}

To study differential equations by reduction to a simpler form is as old as differential equations themselves. 
However, the first formalized definition of reducibility appeared only in the  Stockholm 
lessons of Paul Painlev\'e \cite{painleve}.
A complete algebraization of this definition was given by K. Nishioka \cite{nishioka} and H. Umemura \cite{umemura}. Note that Nishioka's concept of decomposable extension may be more general than reducibility.
The first application was the proof the irreducibility of the first Painlev\'e equation \cite{painleve-irred,nishioka,umemura, umemura2}. H. Umemura gave a simple criterion to prove irreducibility and the Japanese school applied it to all Painlev\'e equations \cite{noumi-okamoto, umemura-watanabe,umemura-watanabe-bis,watanabe, watanabe2}. 
\\
These papers deal with reducibility of \emph{solutions}; in this paper, we will emphasize on the (stronger) notion of \emph{reducibility of an equation} (see next section for proper definitions).

Painlev\'e suggested (see \cite{painleve-note}) that irreducibility of a differential equation can be proved by the computation of its (hypothetical)
 ``rationality group'', as (incorrectly) defined by J. Drach in \cite{drach-these}. 
Such a group-like object was finally defined by H.Umemura \cite{umemura-galois} (where it is a group functor) and B. Malgrange \cite{malgrange} (where it is an algebraic pseudogroup), see also \cite{pommaret}. 
In \cite{casale-irred}, G. Casale showed that  some properties of this Malgrange pseudogroup imply the irreducibility of the differential equation. 
Casale applied this  in \cite{casale-P1} to prove the irreducibility of the first Painlev\'e equation. S. Cantat and F. Loray used this  in \cite{CA}
to prove the irreducibility of the sixth Painlev\'e equation.

The computation of the Malgrange pseudogroup of a differential equation is a difficult (and currently wide open) problem. 
In this paper,  we use differential Galois groups
 of the variational equations along an algebraic solution of equations of the form $y''=f(x,y)$ to determine their Malgrange pseudogroup and then prove their irreducibility. 

The study of an equation through its linearization is ancient. Applications to integrability of differential equations were greatly 
improved by S.L. Ziglin \cite{ziglin} followed by many authors, notably J.J. Morales-Ruiz and J.-P. Ramis \cite{MR1,MR2} and then with C. Sim\`o \cite{MRS} using the differential Galois group of the variational equations along a solution. In \cite{casale-MR}, Casale proved that these Galois groups provide a lower bound for the Malgrange pseudogroup in the following way. 
This pseudogroup acts on the phase space and the algebraic solution (along which we linearize) parameterizes a curve $\Cc$ in this space. Then the group of $k$-jets of its elements fixing a point  in $\Cc$ contains the Galois group of the $k$-th order variational equation along $\Cc$.\\

Using technics developed in \cite{MR1, MR2,MRS} and the Malgrange pseudogroup following \cite{casale-MR}, we will prove the following theorem:

\begin{theorem}\label{main-theorem}
  Let $M$ be a smooth irreducible algebraic $3$-fold over $\CC$ and $X$ be a rational vector field on $M$ such that there exist a closed 
  rational $1$-form $\alpha$ with $\alpha(X)=1$ and a closed rational 2-form $\gamma$ with $\iota_{X}\gamma = 0$.

  Assume $\Cc$ is an algebraic $X$-invariant curve with $X_\Cc \not \equiv 0$. If the Galois group of the first variational equation of $X$ 
  along $\Cc$ is not virtually solvable and the dimension of the Galois group of the formal variational equation is greater than $5$ then 
  the Malgrange groupo\"{\i}d is
  $$Mal(X) = \{\varphi \ | \ \varphi^\ast \alpha = \alpha, \varphi^\ast\gamma = \gamma \}.$$ 
 Moreover, if there exist rational coordinates $x,y,z$ on $M$ such that $X = \frac{\partial}{\partial x} + z \frac{\partial}{\partial y} +f(x,y,z) \frac{\partial}{\partial z}$ then the equation $y'' = f(x,y,y')$ is irreducible.
\end{theorem}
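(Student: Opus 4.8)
The plan is to pin down $Mal(X)$ by squeezing: it contains the (formal) flow of $X$ by definition, and we first check it is contained in the algebraic pseudogroup $D:=\{\varphi\mid\varphi^{*}\alpha=\alpha,\ \varphi^{*}\gamma=\gamma\}$; the two Galois hypotheses will then force equality, and the classical dictionary between reducibility and the Malgrange groupoid yields the last assertion.

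\textbf{Upper bound.} Cartan's formula, together with $\alpha(X)=1$, $d\alpha=0$, $\iota_X\gamma=0$, $d\gamma=0$, gives $\mathcal{L}_X\alpha=d\iota_X\alpha+\iota_Xd\alpha=d(1)+0=0$ and $\mathcal{L}_X\gamma=d\iota_X\gamma+\iota_Xd\gamma=0+0=0$. Hence the flow of $X$ preserves $\alpha$ and $\gamma$, so it is a solution of $D$; by minimality of $Mal(X)$ we get $Mal(X)\subseteq D$.

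\textbf{Lower bound (the main point).} It remains to show that every algebraic sub-groupoid $\mathcal{G}$ with $Mal(X)\subseteq\mathcal{G}\subseteq D$ equals $D$. A solution of $D$ fixing a point fixes the function cut out by $\alpha$ (as $\varphi^{*}\alpha-\alpha$ is closed and vanishes there) and acts on the two-dimensional $\alpha$-fibres preserving the area induced by $\gamma$; thus the transverse structure of $D$ along $\Cc$ is that of the primitive pseudogroup $\mathrm{SDiff}$ of area-preserving transformations of a surface. Up to conjugacy a sub-pseudogroup of $\mathrm{SDiff}$ is either imprimitive (it preserves a foliation by curves), or primitive and finite-dimensional — its Lie algebra being then generated by Hamiltonians of degree at most $2$ and hence conjugate into $\mathfrak{sl}_2(\CC)\ltimes\CC^2$, of dimension $5$ — or all of $\mathrm{SDiff}$. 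Now restrict $\mathcal{G}$ along $\Cc$ and invoke \cite{casale-MR}: the Galois group of the $k$-th variational equation of $X$ along $\Cc$ embeds into the group of $k$-jets at a point of $\Cc$ of the isotropy of $\mathcal{G}$. If the transverse structure of $\mathcal{G}$ were imprimitive this isotropy would preserve a line already at order one, so its identity component — hence the Galois group of the first variational equation — would be solvable, contradicting the first hypothesis; if it were finite-dimensional the whole isotropy, hence the Galois group of the formal variational equation, would have dimension at most $5$, contradicting the second hypothesis. Therefore $\mathcal{G}=D$, i.e. $Mal(X)=D$.

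\textbf{Irreducibility and the main obstacle.} In the coordinates where $X=\partial_x+z\,\partial_y+f(x,y,z)\,\partial_z$, the flow of $X$ is the phase flow of $y''=f(x,y,y')$, so $Mal(X)$ is the Malgrange groupoid of this equation. If the equation were reducible in the Nishioka--Umemura sense, its generic solution would lie in a finite tower obtained by algebraic, Picard--Vessiot, first-order and constant extensions; by the criterion of \cite{casale-irred} this forces $Mal(X)$ into a proper sub-groupoid of $D$, whose transverse structure is then imprimitive or finite-dimensional — exactly the alternatives excluded above. Hence $y''=f(x,y,y')$ is irreducible. The delicate step is the lower bound: one must justify that the transverse structure of $D$ really is the full area-preserving pseudogroup of a surface, enumerate its sub-pseudogroups up to conjugacy, verify that the maximal transitive finite-dimensional one has dimension exactly $5$ (so that ``greater than $5$'' is the sharp hypothesis), and make the translation through \cite{casale-MR} between the transverse-structure alternatives and the two Galois conditions completely precise.
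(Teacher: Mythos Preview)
Your strategy is the paper's: invoke the Cartan--Casale trichotomy (transversally imprimitive / transversally affine / full) for $Mal(X)$ inside $D$, rule out the first two alternatives via the embedding of the variational Galois groups into the isotropy of $Mal(X)$ from \cite{casale-MR}, and read off irreducibility from \cite{casale-irred}. Your upper bound, the transversally affine case, and the dimension count $\le 5$ are handled essentially as in the paper.

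The one place where your sketch is genuinely too quick is the imprimitive case. You write that the isotropy ``would preserve a line already at order one''. But the integrable $1$-form $\omega$ is only guaranteed on a finite covering $\widetilde M\to M$, so what descends rationally to $M$ is not a $1$-form but a section $\overline\omega\in\Sym^\ell\Omega^1_M$ (the product of the Galois conjugates of $\omega$); moreover this section may well vanish along $\Cc$, so nothing is visible at first order without further work. The paper fills this in with three sublemmas: first, the vanishing order of $\overline\omega$ is constant on a Zariski-open subset of $\Cc$ (using $\Lc_X\overline\omega\wedge\overline\omega=0$ in flow-box coordinates), so that its lowest-order homogeneous part $\overline\omega_k$ is a well-defined rational section over $T_\Cc M$; second, $\Lc_{C_1X}\overline\omega_k\wedge\overline\omega_k=0$; third, the Galois group of $VE_1$ therefore preserves a homogeneous $\ell$-\emph{web} (not a single line) in each normal fibre, which forces it into a proper algebraic subgroup of $SL_2(\CC)$ and hence makes it virtually solvable. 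Your final paragraph correctly flags that this translation is the delicate step; this is precisely where the missing argument lies.
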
  

Casale proved in \cite{casale-irred} that 
the second point in the conclusion of this theorem is a direct consequence of the first. 
Another way to express the conclusion of the theorem is that the singular holomorphic foliation $\Fc_{X}$ of $M$ defined by trajectories of $X$ has no transversal rational geometric structure except the transversal rational volume form given by $\gamma$.

This theorem can be applied to prove the irreducibility of  equations of the form $y'' = f(x,y)$. Solutions $x \mapsto (x,y(x),y'(x))$ of such an equation are trajectories of the vector field $\frac{\partial}{\partial x}+ z\frac{\partial}{\partial y}+ f(x,y)\frac{\partial}{\partial z}$ on the phase space. The forms $\alpha = dx$ and $\gamma = \iota_X(dx \wedge dy \wedge dz)$ are closed and $\alpha(X)=1$, $\iota_X\gamma = 0$. To apply the theorem, a particular solution is needed. This is the case in our main example. The second Painlev\'e equation with parameter $a$ is 
$$
(P_2) \quad y'' = xy + 2y^3 +a .
$$
When $a=0$, it admits the algebraic solution $y=0$.
M. Noumi and K. Okamoto prove in \cite{noumi-okamoto} that, apart from the trivial solution $y=0$, the solutions of this equation are irreducible in the sense of Nishioka-Umemura. The approach presented here
uses the Malgrange pseudogroup of the rational vector field $X$ on $M =\CC^3$ given by
$$
X = \frac{\partial}{\partial x} + z\frac{\partial}{\partial y} +(xy+2y^3)\frac{\partial}{\partial z}, 
$$
whose trajectories are parameterized by solutions of $(P_2)$. Using notation of the theorem, $\alpha = dx$, $\gamma = \iota_X(dx\wedge dy\wedge dz)$ and $\Cc = \{y=z=0\}$, we prove that
$$
Mal(X) =\{\varphi \ | \ \varphi^\ast \alpha = \alpha, \varphi^\ast\gamma = \gamma \}.
$$
This equality implies the irreducibility of $(P_2)$. Note that this property of the Malgrange pseudogroup is  much stronger than irreducibility in Nishioka-Umemura sense. However, it is not a   completely algebraic property:  it is formulated for the differential field $\CC(x)$ and seems to be specific to differential fields which are finitely generated over the constants, whereas the definition of irreducibility can be stated over any differential field.\\

The application of our theorem to prove the irreducibility of the second Painlev\'e equation requires two steps.
 
 First, one needs to check whether the Galois group of the first variational equation is solvable by finite (or virtually solvable). This differential equation reduces to the Airy equation $y''=xy$
and it is easy, for example by using the Kovacic algorithm  \cite{kovacic}, to show that its differential Galois group is $SL(2,\CC)$.

Then, to check the dimension condition seems more hasardous at first sight. We would need to compute Galois group of higher order variational equations until we would find a Galois  group of dimension greater than $6$. 
Until now, no bound is known on the order of the required variational equation that one would have to study to prove this. Moreover, the size of the (linearized) variational equations grows fast and, even though there are theoretical methods to compute differential Galois groups in \cite{hrushovski},
the computation of differential Galois groups of such big systems is yet unrealistic in general.
\\
In our case, the situation is better because the methods of P.H. Berman and M.F. Singer \cite{berman-singer,berman} could allow us to determine the differential Galois group. We choose another approach, following the works of A. Aparicio and J.-A. Weil on reduced forms of linear differential systems (see \cite{AJA,AEJA}), notably \cite{AJA2,AJA3} where new effective technics allow to compute the Lie algebra of the differential Galois group of a variational equation of order $k$ when 
the variational equation of order $k-1$ has an abelian differential Galois group.
We show how to extend their method to our situation.

These computations can be reused to prove irreducibility of a larger class of differential equations: $y'' = xy + y^nP(x,y)$. We will then show how this technique can be used to prove the irreducibility of a family of Painlev\'e III equations.
\\

The paper is organized as follows. Section $1$ contains the definitions of reducibility,  variational equations and its differential Galois group in 
order to state the main theorem. 
In Section 2, we elaborate a simple irreducibility criterion for equations of the form $y'' = xy + y^nP(x,y)$ and give two irreducibility proofs for a Painlev\'e II equation. In Section 3, we apply a similar scheme to prove the irreducibility of a Painlev\'e III equation from statistical physics.\\
In the appendices, we detail the constructions and prove the main theorem. 
In Appendix A, we recast the Galois groups in the context of $G$-principal connection. 
In Appendix B,  we describe and compare various notions of variational equations (arc 
space and frame bundle viewpoints), as the literature is occasionally hazy on this point. In appendix C, we recall the definition of the Malgrange pseudogroup of a vector field and give some of its properties regarding 
the reducibility and the variational equations. Together  with the Cartan classification of pseudogroups in dimension $2$ (in a neighborhood of a generic point), this allows us to finally prove our main theorem 1.

\section{Definitions}

\subsection{Irreducibility}

In the 21st lesson of his Stockholm's lessons \cite{painleve}, P. Painlev\'e defined different classes of transcendental functions and gave the definition of order two differential equation reducible to order one. Then he proved that the so-called Picard-Painlev\'e equation, a special case of Painlev\'e sixth equation discovered by E. Picard, is irreducible. This proof relies on the fact that this equation has no moving singularities and then its flow gives bimeromorphic transformations of the plane $\CC^2$. In this situation, reducible equations have a flow sending a foliation by algebraic curves onto another algebraic one. This is not the case for the case for Picard-Painlev\'e equation. 

Later, Painlev\'e claimed without proof that the computation of Drach's rationality group from \cite{drach-these} would prove the irreducibility of an equation.
He tried to compute it for the first Painlev\'e equation in \cite{painleve-note}.   

\begin{definition}[\cite{painleve,nishioka,umemura}]\label{reducible-solution}
  Let $(K,\delta)$ be an ordinary differential field, $y$ a differential indeterminate and $(E): \delta^2 y = F(y,\delta y) \in K(y,\delta y)$ a second order differential equation defined on $K$.
  A solution of the equation $(E)$ is called a \emph{reducible solution} if it lies in 
  a differential extension $L$ of $K$ built in the following way:
    $$
    K=K_{0} \subset K_{1} \subset \ldots \subset K_{m}=L 
    $$
  with one of the following elementary extension for any $i$:
    \begin{itemize}
      \item either $K_{i} \subset K_{i+1}$ is an algebraic extension,
      \item or $K_{i} \subset K_{i+1}$ is a linear extension,
             {\it i.e.} $K_{i+1} = K_{i}(f^p_{j}; 1 \leq p,j \leq n )$ with $\delta f^p_{j} = \sum_{k} A^k_{j} f^p_{k}$, $ A^k_{j} \in K_{i}$.
      \item or $K_{i} \subset K_{i+1}$ is an abelian extension,
             {\it i.e.}  $K_{i+1} = K_{i}(\varphi_{j}(a_{1},\ldots,a_{n});1 \leq j \leq n)$ with $\varphi$'s a basis of periodic functions on $\CC^n$ given by the field of rational functions
             on an abelian variety over $\CC$ and  $a$'s in $K_{i}$, 
      \item or $K_{i} \subset K_{i+1}$ has transcendence degree $1$,
             {\it i.e.} $K_{i+1} = K_{i}(z, \delta z)$ with $P(z,\delta z) = 0$, $P \in K_{i}[X,Y]-\{0\}$.
    \end{itemize}
\end{definition}

Note that Nishioka's definition of decomposable extension seems more general that reducibility. We don't know any example of a decomposable irreducible extension nor any proof of the equivalence of the two notions. In the articles of Umemura, the notion of reducible appears together with the notion of classical functions. The latter is similar except that the last kind of elementary extension is not allowed.

This definition may not be the most relevant to understand the geometry of the differential equation:
an order two differential equation may have
  two functionnally independent first integrals in a Picard-Vessiot extension of $\CC(x,y,z)$ without being reducible.
This is the case for Picard-Painlev\'e equation as it is explained in the 21st lesson of Painlev\'e \cite{painleve}, see also \cite{casale-PP} and \cite{watanabe2}.  

The above definition is a property of individual solutions; however, the equation may have an exceptional solution which is reducible whereas the others are not. For example, any  equation $\delta^2 y = y F(y, \delta y) + \delta y G(y, \delta y) \in K[y,\delta y]$ admits $y=0$ as solution. Therefore we will introduce a notion of \emph{reducibility of the equation} which translates, in algebraic terms, the idea  that the general solution of the equation is reducible.

\begin{definition}\label{reducible-equation}
  Let $(K,\delta)$ be an ordinary differential field and $(E): \delta^2 y = F(y,\delta y) \in K(y,\delta y)$ a second order differential equation defined on $K$.
  The equation $(E)$ is called a \emph{reducible differential equation over $K$} 
  if there exists a reducible solution $f$ such that $transc.deg.(K(f ,\delta f)/K) = 2 $ (i.e. the general solution of the equation is reducible).
\end{definition}

\begin{example}
Consider the equation $\delta^2 y =0$. We want to show that it is reducible over $(\CC(x), \delta = \frac{\partial}{\partial x})$. Its general solution is $f=a x + b$ for arbitrary (i.e transcendental) constants $a$ and $b$. 
Here, $K=\CC(x)$ and $K(f ,\delta f) = \CC(a,b)(x)$ (with $a$ and $b$ transcendental over $\CC$) so that we indeed have
$transc.deg.(K(f ,\delta f)/K) = 2 $. This is why, in the second condition for reducibility of solutions (in definition \ref{reducible-solution} above), we allow \emph{linear} extensions with possibly new constants (and not only Picard-Vessiot extensions).
\end{example}

\begin{remark} Note that it does not seem fully clear whether "all" solutions of  a reducible equation are reducible.
\end{remark}



Using the Malgrange pseudogroup of a vector field and \'E.~Cartan's classification of pseudogroups, Casale proved in \cite{casale-P1}  the following theorem.

\begin{theorem}[Annexe A in \cite{casale-P1}]
 Let $X$ be a rational vector field on $M$, a smooth irreducible algebraic $3$-fold. Assume there exist a rational closed $1$-form $\alpha$ such that $\alpha(X) = 1$ and a rational closed $2$-form $\gamma$ such that $\iota_{X}\gamma = 0$. 
 Then one of the three following holds.
    \begin{itemize}
      \item There exists a $1$-form $\omega$ with coefficients in $\overline{\CC(M)}^{alg}$ such that $\omega(X)=0$ and 
              for any local determination of algebraic functions $\omega \wedge d\omega = 0$.
      \item There exist $\theta_1$, $\theta_2$ two rational $1$-form vanishing on $X$ and $(\theta_{i}^j)$ a traceless $2\times 2$ matrix
              of rational $1$-form such that $\theta_i(X) = 0$, $d\theta_i = \sum _{k} \theta_i^k\wedge \theta_k$ and $d\theta_i^j = \sum_{k}\theta^j_{k} \wedge\theta^k_{i}$, $\forall (i,j) \in \{1,2\}^2$.
              \item The Malgrange pseudogroup is $Mal(X) =\{\varphi | \varphi^\ast \alpha = \alpha;\varphi^\ast (\gamma) = \gamma  \}$. 
    \end{itemize}
\end{theorem}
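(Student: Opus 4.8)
The plan is to read off the three cases from the structure of the Malgrange groupoid $Mal(X)$, combined with \'E.~Cartan's classification of transitive Lie pseudogroups on a surface, following Annexe~A of \cite{casale-P1}. First I would record the inclusion that always holds. Since $\alpha$ and $\gamma$ are closed with $\iota_X\alpha=\alpha(X)=1$ and $\iota_X\gamma=0$, Cartan's magic formula gives $\L_X\alpha=d\,\iota_X\alpha+\iota_X\,d\alpha=0$ and $\L_X\gamma=d\,\iota_X\gamma+\iota_X\,d\gamma=0$, so the flow of $X$ preserves $\alpha$ and $\gamma$; as $\G:=\{\varphi\mid\varphi^\ast\alpha=\alpha,\ \varphi^\ast\gamma=\gamma\}$ is an algebraic $D$-groupoid whose solutions contain this flow, minimality of the Malgrange groupoid yields $Mal(X)\subseteq\G$. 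Applying the same remark to the $D$-groupoid cut out by $\varphi_\ast X\wedge X=0$ (which also contains the flow, since $\exp(sX)_\ast X=X$) shows that $Mal(X)$ preserves the line field $\CC X$, that is, the singular foliation $\Fc_X$.

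The rest of the argument is local, near a generic point of $M$. There $\alpha=dt$ for a local coordinate $t$, so $M\cong(\CC_t,0)\times(T,0)$ with $X=\partial/\partial t$, and since $\gamma$ is closed and $\iota_X\gamma=0$ it is the pull-back of an area form $\nu$ on the surface germ $T\cong(\CC^2,0)$; in these coordinates $\G$ consists exactly of the maps $(t,u)\mapsto(t+c,\psi(u))$ with $c$ locally constant and $\psi^\ast\nu=\nu$. Because $Mal(X)$ preserves $\Fc_X$ and contains every time translation, it is determined by its transverse pseudogroup $H$, a Lie pseudogroup of germs of biholomorphisms of $(T,0)$ with $H\subseteq\mathrm{Diff}_\nu(T)$, and one has $Mal(X)=\G$ precisely when $H=\mathrm{Diff}_\nu(T)$. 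Moreover $H$, being the transverse incarnation of the algebraic $D$-groupoid $Mal(X)$, is itself algebraic, so any $H$-invariant tensor or reduction structure on $T$ descends to $M$ with coefficients in $\overline{\CC(M)}^{alg}$.

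The heart of the proof is the classification of $H$. If $H$ is not all of $\mathrm{Diff}_\nu(T)$, \'E.~Cartan's classification of transitive Lie pseudogroups on a surface contained in the area-preserving one (together with the degenerate non-transitive and solvable cases, which also land in the first possibility below through the flag of their invariant structure) leaves two outcomes: either $H$ is imprimitive, hence preserves a foliation of $T$ by curves, or $H$ is of finite type and primitive, hence (up to conjugacy) contained in the unimodular affine pseudogroup $\mathrm{SL}_2(\CC)\ltimes\CC^2$, which preserves a flat torsion-free connection compatible with $\nu$. In the first case the invariant foliation of $T$ pulls back, through the local product structure, to an $X$-invariant codimension-one foliation of $M$ containing $\Fc_X$; a $1$-form $\omega$ defining its conormal direction satisfies $\omega(X)=0$, being integrable it forces $\omega\wedge d\omega=0$ along every local determination, and algebraicity of $Mal(X)$ puts the coefficients of $\omega$ in $\overline{\CC(M)}^{alg}$ — this is the first alternative. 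In the second case, a coframe $\theta_1,\theta_2$ adapted to the invariant connection on $T$ (extended to $M$ as $1$-forms killing $X$) together with its traceless connection matrix $(\theta_i^j)$ satisfies $\theta_i(X)=0$, $d\theta_i=\sum_k\theta_i^k\wedge\theta_k$ (vanishing torsion) and $d\theta_i^j=\sum_k\theta_k^j\wedge\theta_i^k$ (flatness), and rigidity of $\mathfrak{sl}_2$-structures together with algebraicity of $Mal(X)$ lets one take these forms rational — this is the second alternative. Finally, if $H=\mathrm{Diff}_\nu(T)$ then $Mal(X)=\G$, the third alternative.

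The step I expect to be the real obstacle is not the formal manipulations — Cartan's formula for $\L_X$, Frobenius integrability of the pulled-back foliation, the structure equations in the affine case are all routine — but the rigorous passage to the transverse pseudogroup: one must verify that the transverse structure of the $D$-groupoid $Mal(X)$ at a generic point is again represented by an algebraic $D$-groupoid to which Cartan's analytic classification legitimately applies, and then track that the resulting invariant foliation descends with coefficients in $\overline{\CC(M)}^{alg}$ (respectively that the flat affine structure descends rationally). This is exactly the technical content of Annexe~A of \cite{casale-P1}, whose treatment I would follow in detail.
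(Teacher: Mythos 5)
The paper itself does not prove this statement: it is imported from Annexe~A of \cite{casale-P1}, and even the refined version restated as Theorem~\ref{theoreme-casale-MR} in Appendix~C.2 comes only with the pointer ``see the appendix of \cite{casale-P1} for a proof''. So there is no internal proof to compare with; what one can say is that your sketch follows exactly the strategy that the paper and its reference attribute to Cartan: the unconditional inclusion $Mal(X)\subseteq\{\varphi \ | \ \varphi^\ast\alpha=\alpha,\ \varphi^\ast\gamma=\gamma\}$ by minimality (the flow preserves $\alpha$ and $\gamma$), the local normal form $\alpha=dt$, $\gamma$ basic, reduction to the transverse pseudogroup of area-preserving germs of $(\CC^2,0)$, and Cartan's trichotomy imprimitive / finite-type special-affine / full, matching the three alternatives of the statement.

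Taken as a standalone proof, however, your write-up leaves unproved precisely the content of that appendix, as you acknowledge in your last paragraph: (i) that the transverse part of the algebraic $D$-groupoid $Mal(X)$ at a generic point is a Lie pseudogroup to which Cartan's analytic classification legitimately applies, and (ii) that the resulting invariant structures descend to $M$ with the stated regularity --- coefficients in $\overline{\CC(M)}^{alg}$ for $\omega$ (the algebraic closure enters because the invariant foliation or web need not be unique and may only be defined after a finite cover, something your phrase ``algebraicity of $Mal(X)$'' does not by itself deliver), and genuinely rational $\theta_i$, $\theta_i^j$ in the transversally affine case. These are not routine verifications; deferring them to \cite{casale-P1} is consistent with what the paper does, but it means your proposal is an outline of the cited proof rather than a proof. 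Two smaller points: you implicitly use that $\gamma$ is generically nonzero, so that $\ker\gamma=\CC X$ and $\gamma$-preserving maps preserve $\Fc_X$ (the intended reading, but it should be stated), and the step ``a primitive proper subpseudogroup of the area-preserving pseudogroup is of finite type and conjugate into $SL_2(\CC)\ltimes\CC^2$'' needs the precise statement from Cartan's (or Lie's) classification, including why the conjugation can be taken compatible with the area form, rather than a gesture at it.
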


The systems of PDE given in the first two items of the statement are the analogue of the resolvant equations in classical Galois theory. 
The existence of a rational solution to the resolvant equations would  imply that the Malgrange pseudogroup is small.  
Then in \cite{casale-irred}, the claim of Painlev\'e is proved.

\begin{theorem}[\cite{casale-irred}]
 Let $E$ be a rational order two equation $y'' = F(x,y) \in \CC(x,y)$ and 
  $X = \frac{\partial}{\partial x} + z\frac{\partial}{\partial y} + F(x,y)\frac{\partial}{\partial z}$ be the rational vector field on $\CC^3$ 
  associated to $E$. If $Mal(X) =\{\varphi | \varphi^\ast dx = dx;\varphi^\ast (\iota_X dx\wedge dy \wedge dz) = \iota_X dx\wedge dy \wedge dz  \}$ then $E$ is irreducible. 
\end{theorem}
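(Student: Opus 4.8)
The plan is to prove the contrapositive: assuming that $E$ is reducible over $\CC(x)$, I would deduce that $Mal(X)$ is strictly smaller than the pseudogroup $\{\varphi \mid \varphi^\ast dx = dx,\ \varphi^\ast \gamma = \gamma\}$, where $\gamma = \iota_X(dx\wedge dy\wedge dz)$. The geometry behind this is that at a generic point the leaf space of the foliation $\Fc_X$ cut out by the trajectories of $X$ is a surface, $Mal(X)$ induces a transitive Lie pseudogroup on it, and the data $dx$, $\gamma$ descend to a time parametrization of the leaves and a transversal area form. Reducibility should force the transversal pseudogroup to preserve, in addition, some nontrivial rational or algebraic geometric structure on that surface; but by \'E. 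Cartan's classification of transitive Lie pseudogroups in dimension two, the full area-preserving pseudogroup preserves no such extra structure, so $Mal(X)$ cannot be the big one.

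First I would fix a tower $K=K_0\subset K_1\subset\cdots\subset K_m=L$ witnessing reducibility of the general solution $f$, where $\mathrm{transc.deg.}(K(f,\delta f)/K)=2$, and translate each elementary extension into a transversal structure on $\Fc_X$ through Malgrange's theory. An algebraic extension only replaces $M$ by a finite covering and does not change the local type of the transversal pseudogroup, so it can be absorbed. A linear extension $K_{i+1}=K_i(f^p_j)$ yields a flat transversal linear connection on $\Fc_X$; restricting to the two-dimensional transversal this becomes either a transversal foliation by curves or a traceless $\mathfrak{sl}_2$-type system of the kind appearing in the second case of the trichotomy of \cite[Annexe A]{casale-P1} quoted above. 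An abelian extension yields a transversal fibration in abelian varieties, hence a translation-invariant transversal structure, which again restricts to a finite-dimensional (affine) transversal pseudogroup. A transcendence-degree-one extension $K_{i+1}=K_i(z,\delta z)$ with $P(z,\delta z)=0$ yields a transversal first integral with values in an algebraic curve, that is, an algebraic $1$-form $\omega$ with $\omega(X)=0$ and $\omega\wedge d\omega=0$ along every local determination — exactly the first case of that trichotomy. In all cases $Mal(X)$ ends up preserving a transversal structure finer than the area form.

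Now I would invoke the trichotomy itself: under the standing hypotheses on $\alpha=dx$ and $\gamma$, either there is such an integrable algebraic $1$-form $\omega$ (first case), or there is such a traceless system $(\theta_i^j)$ (second case), or $Mal(X)=\{\varphi \mid \varphi^\ast\alpha=\alpha,\ \varphi^\ast\gamma=\gamma\}$ (third case). The previous paragraph shows that a reducible equation always lands in the first or the second case. But if $Mal(X)$ is the big pseudogroup of the third case, then every rational structure left invariant by $Mal(X)$ is built from $\alpha$ and $\gamma$ alone, and the full area-preserving pseudogroup of a surface leaves invariant neither a foliation by curves nor an affine structure; so the first and second cases are incompatible with the hypothesis. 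Hence $E$ is not reducible, which is the claim. Alternatively, one can bypass the trichotomy and argue directly via the Cartan classification that the transversal pseudogroup associated with a reducible equation is a proper subgroup of $\mathrm{Diff}_{\mathrm{area}}$, reaching the same contradiction.

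I expect the genuine difficulty to be the second step — the rigorous passage ``$E$ reducible $\Rightarrow$ $Mal(X)$ preserves a finer transversal structure''. This relies on the functorial properties of the Malgrange $D$-groupoid: that $Mal(X)$ is the smallest algebraic $D$-groupoid whose solutions contain the flow of $X$, and that it behaves coherently under the differential-algebraic operations appearing in the tower (adjoining solutions of linear systems, abelian functions, or one new algebraic function of a transcendental). Care is also needed with the singular and non-transitive locus of $\Fc_X$, with the ramification introduced by the algebraic steps, and with the abelian case, which does not appear verbatim in the trichotomy and must be seen to reduce, in transversal dimension two, to the affine situation of the second case.
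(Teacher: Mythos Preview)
The paper does not prove this statement: it is quoted from \cite{casale-irred} and used as a black box, so there is no in-paper proof to compare against. Your overall plan (contrapositive, landing in one of the first two cases of the Cartan trichotomy) is indeed the shape of the argument in \cite{casale-irred}.

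That said, the crucial step you flag is not merely ``the genuine difficulty'' --- in your sketch it is not yet correct. The elementary extensions in the tower are extensions of the intermediate fields $K_i$, which in general have nothing to do with $\CC(M)=\CC(x,y,z)$; a linear extension $K_i\subset K_{i+1}$ does not produce a flat connection transversal to $\Fc_X$ on $M$, and a transcendence-degree-one step over $K_i$ does not hand you a rational $1$-form on $M$ with $\omega(X)=0$. What one actually has is only that the pair $(f,\delta f)$, with $\mathrm{tr.deg.}(K(f,\delta f)/K)=2$, lies in such a tower. The work in \cite{casale-irred} is precisely to convert that statement about the \emph{generic solution} into a rational (or algebraic) object on the phase space $M$ invariant under the flow of $X$ --- using that $(y,z)$ are themselves the generic solution over $\CC(x)$ and analyzing, step by step through the tower, which differential-algebraic relations this forces on $\CC(x,y,z)$. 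Only then does one obtain the integrable $1$-form or the transversal affine data of the first two alternatives. Your case-by-case paragraph skips this reduction, so as written it does not establish the implication ``$E$ reducible $\Rightarrow$ $Mal(X)$ preserves a finer transversal structure''.
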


\subsection{Variational Equations}

Let $X$ be a vector field on an algebraic manifold $M$ and $\Cc \subset M$ an algebraic $X$-invariant curve such that 
$X_{\Cc} \not \equiv 0$. Variational equations can be written easily in local coordinates. Intrinsic versions will be given in  the appendix. 
In local coordinates $(x_1, \ldots, x_n)$ on $M$, the flow equations of $X = \sum a_i(x)\frac{\partial}{\partial x_i}$ are 
$$ 
\frac{d}{dt} x_i  = a_i(x) \quad i= 1,\ldots,n.
$$
This flow can be used to move germs of analytic curves on $M$ pointwise. Let $\epsilon \mapsto x(\epsilon)$ be such a germ defined on $(\CC,0)$.
For any $\epsilon$ small enough, one has 
$$ 
\frac{d}{dt} x_i(\epsilon) = a_i(x(\epsilon)) \quad i= 1,\ldots,n.
$$
Analyticity allow us to expand this equality. Let $x(\epsilon) = \displaystyle \left(\sum_k x_1^{(k)} \frac{\epsilon^k}{k!}, \ldots,\sum_k x_n^{(k)} \frac{\epsilon^k}{k!} \right)$
then
\begin{equation}
\label{variational system}
\tag{$VE_{k}$} 
\left\{
  \begin{array}{l}
	\displaystyle \frac{d}{dt} x_i^0 = a_i(x^0) \\
	\displaystyle		\frac{d}{dt} x_i^{(1)} = \sum_j \frac{\partial a_i}{\partial x_j}(x^0) x_j^{(1)} \\
	\displaystyle \frac{d}{dt} x_i^{(2)} = \sum_j \frac{\partial a_i}{\partial x_j}(x^0) x_j^{(2)} + \sum_{j,\ell} \frac{\partial^2 a_i}{\partial x_j \partial x_\ell}(x^0) x_j^{(1)}x_\ell^{(1)}\\
%
%
	\displaystyle \frac{d}{dt} x_i^{(3)} = \sum_j \frac{\partial a_i}{\partial x_j}(x^0) x_j^{(3)} + \sum_{j,\ell} 3 \frac{\partial^2 a_i}{\partial x_j \partial x_\ell}(x^0) x_j^{(2)}x_\ell^{(1)}  \\
	\hspace{7cm}\displaystyle  + \sum_{j,\ell,m} \frac{\partial^3 a_i}{\partial x_j \partial x_\ell \partial x_m}(x^0) x_j^{(1)}x_\ell^{(1)}x_m^{(1)}\\
	\vdots \\
	\displaystyle \frac{d}{dt} x_i^{(k)} = F_k(\partial^\beta a_i (x_0) , x_i^{(\ell)}\ |\ i=1,\ldots, n; |\beta|\leq k; \ell \leq k )
  \end{array}
\right.
\end{equation}

where the $F$'s are given by Faa di Bruno formulas (see formula (14) p. 860 in \cite{MRS}). The order $k$ variational equation is the differential system on order $k$ jets of 
parameterized curve on $M$ obtained in this way. Because $\Cc$ is an algebraic $X$-invariant curve, the space of parameterized curves with $x^{0} \in \Cc$ is an algebraic subvariety invariant by the variational equation. The variational equation gives a non-linear connexion on the bundle over $\Cc$ of parameterized curves pointed on $\Cc$. This restriction is the variational equation along $\Cc$.

The system (\ref{variational system}) is a rank $nk$ non-linear system but it can be linearized. For instance the 
order $3$ variational equation is linearized using new unknowns  $z_{\ell,k,j} = x_\ell^{(1)}x_k^{(1)}x_j^{(1)}$, $z_{k,j}=x_k^{(2)}x_j^{(1)}$ and $z_k = x^{(3)}_k$, which amounts to perform some tensor constructions on lower order linearized variational equations (such as symmetric powers of the first variational equation), see \cite{simon, AJA2, MRS}. 
The linear system obtained is
\begin{equation}
\label{linear variational system}
\tag{$LVE_{3}$}
\left\{
\begin{array}{l}
\displaystyle \frac{d}{dt} x_i^0 = a_i(x^0) \\
\\
\displaystyle
\frac{d}{dt} z_{\ell,k,j} = \sum_{b,c,d} \left(\frac{\partial a_\ell}{\partial x_b} + \frac{\partial a_k}{\partial x_c} + \frac{\partial a_j}{\partial x_d}\right)(x^0) z_{b,c,d}\\
\\
\displaystyle \frac{d}{dt} z_{k,j} = \sum_{b,c} \left(\frac{\partial a_\ell}{\partial x_b} + \frac{\partial a_k}{\partial x_c}\right)(x^0) z_{b,c} + \sum_{c,d} \frac{\partial^2 a_k}{\partial x_c \partial x_d}(x^0) z_{c,d,j}\\
\\
%
\displaystyle \frac{d}{dt} z_i = \sum_j \frac{\partial a_i}{\partial x_j}(x^0) z_j + \sum_{j,\ell} 3 \frac{\partial^2 a_i}{\partial x_j \partial x_\ell}(x^0) z_{j,\ell} + \sum_{j,\ell,m} \frac{\partial^3 a_i}{\partial x_j \partial x_\ell \partial x_m}(x^0) z_{j,\ell,m}
\end{array}\right.
\end{equation} 
 
 When $X$ preserves a transversal fibration $\pi: M \to B$, the parameterized curves $\epsilon \to x(\epsilon)$ included in fibers of $\pi$ give a subset of curves invariant by $X$. The restriction of the variational equation to this subset is called the $\pi$-normal variational equation. The main case of interest is the normal variational equation of an ODE. Such a differential equation gives a vector field $\frac{\partial}{\partial x_{1}} +\ldots$ where $x_{1}$ is the independent coordinate.  The normal variational equation (with respect to the projection on the curve of the independent coordinate) is obtained from the variational equation by setting $x_{1}^{(k)} = 0$ when $k \geq 1$. 
 
 The order $k$ linearized normal variational equation is obtained from the order $k$ linearized variational equation by setting $z_{\alpha} = 0$ when a coordinate of $\alpha \in \NN^k$ is equal to $1$. The induced system will be denoted by $NLVE_{k}$

 \subsection{The Galois Group and the Main Theorem}
 
 Following E. Picard and E. Vessiot, the differential Galois group of a linear differential system $\frac{d}{dt} Y = A Y$ with $A \in GL(n,\CC(t))$ can be defined in the following way. 
 
 First, select a regular point $t_{0}$ of the differential system and a fundamental matrix $F(t)\in GL(\CC\{t-t_{0}\})$ of holomorphic solutions at this point. Then the splitting field, called Picard-Vessiot extension, is $L = \CC(t, F_{i}^j(t) | 1\leq i,j \leq n)$ and the differential Galois group $G$ is the group of $\CC(t)$-automorphisms of $L$ commuting with $\frac{d}{dt}$. 
 
 Picard proved that this group $G$ is an linear algebraic subgroup of $GL(n,\CC)$ and Vessiot proved the Galois correspondence.  
 In our context, the linearized normal variational equation is a subsystem of the linearized variational equation so the Galois correspondence implies that its Galois group is a quotient of the Galois group of the variational equation. 

Introductions to this theory may be found in \cite{magid} or the reference book \cite{put-singer}. 
Other variations on that theme can be found e.g. in 
\cite{katz, kolchin, bertrand}. We propose an overview of the theory from the ``principal bundle'' point of view in the appendices.

The statement of our main theorem involves the Malgrange pseudogroup of a vector field wich is a non linear generalisation of the differential Galois group; we recall its definition in Appendix C.2. 

\newcounter{theoreme1}
\setcounter{theoreme1}{\value{theorem}}
\setcounter{theorem}{0}

\begin{theorem}
  Let $M$ be a smooth irreducible algebraic $3$-fold over $\CC$ and $X$ be a rational vector field on $M$ such that there exist a closed 
  rational $1$-form $\alpha$ with $\alpha(X)=1$ and a closed rational 2-form $\gamma$ with $\iota_{X}\gamma = 0$.

  Assume $\Cc$ is an algebraic $X$-invariant curve with $X_\Cc \not \equiv 0$. If the Galois group of the first variational equation, $(VE_{1})$, of $X$ 
  along $\Cc$ is not virtually solvable and if the exists a $k \geq2$ such that the dimension of the Galois group of the $(LVE_{k})$ is greater than $5$ then the Malgrange pseudogroup is
  $$Mal(X) = \{\varphi \ | \ \varphi^\ast \alpha = \alpha, \varphi^\ast\gamma = \gamma \}.$$ 
 Moreover, if there exist rational coordinates $x,y,z$ on $M$ such that $X = \frac{\partial}{\partial x} + z \frac{\partial}{\partial y} +f(x,y,z) \frac{\partial}{\partial z}$ then the equation $y'' = f(x,y,y')$ is irreducible.
\end{theorem}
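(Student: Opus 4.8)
The plan is to deduce the theorem from the trichotomy of the cited theorem of Casale (Annexe A in \cite{casale-P1}) by using the Galois-group hypotheses to eliminate the first two alternatives, and then to invoke \cite{casale-irred} for the final sentence about irreducibility of the scalar equation. Since $M$, $X$, $\alpha$, $\gamma$ satisfy the hypotheses of that theorem, exactly one of its three conclusions holds; we want to force the third one, $Mal(X)=\{\varphi\mid\varphi^{\ast}\alpha=\alpha,\ \varphi^{\ast}\gamma=\gamma\}$, so it suffices to rule out the existence of the algebraic $1$-form $\omega$ with $\omega(X)=0$ and $\omega\wedge d\omega=0$ (the ``integrating factor / transversal foliation'' case) and the existence of the $\mathfrak{sl}_2$-type coframe $(\theta_i,\theta_i^j)$ transverse to $X$ (the ``transversal affine/projective structure'' case).

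The key mechanism is the result of \cite{casale-MR} recalled in the introduction: the group of $k$-jets of elements of $Mal(X)$ fixing a point of $\Cc$ contains the differential Galois group of the $k$-th order variational equation along $\Cc$, and the $(LVE_k)$ Galois group surjects onto (a quotient equal to, up to the normal part) this Galois group as explained in §1.2. First I would treat the first alternative: an algebraic $1$-form $\omega$ transverse to $X$ with $\omega\wedge d\omega=0$ defines, along $\Cc$, a $1$-dimensional invariant subbundle of the transversal space preserved by all variational equations; its presence would force the Zariski closure of the first variational Galois group (acting on the $2$-dimensional transversal space) to be reducible, hence conjugate into the Borel of $GL_2$ — in particular virtually solvable (after passing to the finite extension trivializing the algebraic coefficients of $\omega$). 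This contradicts the hypothesis that the Galois group of $(VE_1)$ is not virtually solvable. Next I would treat the second alternative: the existence of the traceless matrix $(\theta_i^j)$ of rational $1$-forms satisfying the structure equations $d\theta_i=\sum_k\theta_i^k\wedge\theta_k$, $d\theta_i^j=\sum_k\theta_k^j\wedge\theta_i^k$ means the transversal pseudogroup is contained in the (local) affine group of the plane, equivalently the full jet-pseudogroup transverse to $\Cc$ has dimension at most $6$; by the $k$-jet inclusion above this bounds the dimension of the Galois group of $(LVE_k)$ by $5$ on the normal (transversal) part — more precisely, combining the rank-$1$ ``flow'' direction with the transversal structure, every $(VE_k)$ Galois group embeds into a group of dimension $\le 6$, contradicting the hypothesis that some $(LVE_k)$ has Galois group of dimension $>5$. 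Hence only the third alternative survives, giving the asserted $Mal(X)$.

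Finally, when rational coordinates $x,y,z$ exist with $X=\partial_x+z\,\partial_y+f(x,y,z)\,\partial_z$, the closed forms may be taken to be $\alpha=dx$ and $\gamma=\iota_X(dx\wedge dy\wedge dz)$ (which indeed satisfy $\alpha(X)=1$, $\iota_X\gamma=0$), so the conclusion $Mal(X)=\{\varphi\mid\varphi^{\ast}\alpha=\alpha,\ \varphi^{\ast}\gamma=\gamma\}$ is exactly the hypothesis of the theorem of \cite{casale-irred} quoted above; that theorem then yields the irreducibility of $y''=f(x,y,y')$. (One subtlety: \cite{casale-irred} is stated for $f=F(x,y)$ independent of $z$, but as remarked after the main theorem the second conclusion is a formal consequence of the first in general, so no genuine restriction is needed.)

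The main obstacle I expect is making the dictionary precise in the second step: translating Cartan's structure equations for the transversal pseudogroup into an honest dimension bound on the variational Galois groups, and being careful about the distinction between the normal variational equation $NLVE_k$ (which sees only the $2$-dimensional transversal directions, and to which Cartan's $\le 6$ bound directly applies) and the full $LVE_k$ (which also carries the $1$-dimensional direction along $X$); one must check that adding this extra direction cannot push the dimension past $5$ unless the transversal pseudogroup is the whole volume-preserving pseudogroup, i.e. unless alternative three already holds. Handling the algebraic (rather than rational) coefficients in alternative one — passing to a finite cover of $\Cc$ and checking that ``virtually solvable'' is insensitive to this — is a minor but necessary technical point. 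The rest is bookkeeping via the Galois correspondence of §1.2 and the jet-inclusion of \cite{casale-MR}.
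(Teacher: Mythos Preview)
Your overall architecture is exactly the paper's: invoke the trichotomy of \cite{casale-P1}, use the two Galois hypotheses to kill the two non-generic branches, and quote \cite{casale-irred} for the last sentence. Both obstacles you flag are real, but you have misjudged which one is delicate.

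For the transversally imprimitive branch you assert that $\omega$ ``defines, along $\Cc$, a $1$-dimensional invariant subbundle of the transversal space''. This breaks down when $\omega$ (or the rational symmetric form $\overline{\omega}=\prod_i\omega_i$ obtained by taking the product of its Galois conjugates) vanishes identically along $\Cc$ --- and nothing in the setup prevents that. The paper does \emph{not} get an invariant line. It first descends to a rational symmetric $\ell$-form $\overline{\omega}$ on $M$, proves (in rectifying coordinates for $X$) that its vanishing order along $\Cc$ is constant on a Zariski open, and extracts the lowest-order homogeneous part $\overline{\omega}_k$: a nonzero section of $\Sym^{\ell}V^{\ast}(T_{\Cc}M)$ defining a homogeneous $\ell$-\emph{web} in each normal fibre. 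A second sublemma shows $\Lc_{C_1X}\overline{\omega}_k\wedge\overline{\omega}_k=0$, so the $(VE_1)$ Galois group preserves this web; since $SL_2(\CC)$ preserves no web on $\CC^2$, the normal block lies in a proper algebraic subgroup of $SL_2$, hence is virtually solvable. Your ``minor but necessary technical point'' covers only the finite-cover issue, not this degeneration, which is where the actual work is.

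For the transversally affine branch your idea is right but the arithmetic is inconsistent: you conclude ``dimension $\le 6$'', which does not contradict ``$>5$''. The cure is the isotropy remark you yourself quoted from \cite{casale-MR}: the Galois group of the variational equation embeds into $Mal(X)_p^0=\{\varphi\in Mal(X):\varphi(p)=p\}$ for generic $p\in\Cc$. In the affine case one computes in straightening coordinates that $\dim\mathfrak{mal}(X)_p\le 6$, while $\mathfrak{mal}(X)_p^0$ is \emph{strictly} smaller (it cannot contain $X=\partial_{x_1}$), so $\dim\le 5$. That is the inequality you need; the ``flow direction'' never contributes to the Galois group because it is already killed by the isotropy condition.
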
  

\setcounter{theorem}{\value{theoreme1}}

In the application, we will compute the Galois group of the normal variational equation. As this group is a quotient of the group used in the theorem,  one can replace $(VE_{1})$ by $(NVE_{1})$ and $(LVE_{k})$ by $(NLVE_{k})$ without changing the conclusion of the theorem.
We postpone the proof of the theorem to the appendices because it requires additional technology which is recalled there. In the next two sections, we show applications of this theorem to the irreducibility of order two equations such as the Painlev\'e equations $(P_{II})$ and $P_{III}$.

\section{Irreducibility of $\frac{d^2y}{dx^2}=f(x,y)$ and the Painlev\'e II Equation}

We will compute the differential Galois group of some normal variational equation along the solution $y=0$ of differential equations of the form:
$$
\frac{d^2 y}{dx^2} = xy +y^nP(x,y) \text{ with } P \in \CC(x,y) \text{ without poles along }y=0\text{ \ and \ } n \geq 2.
$$ 

The vector field of our equation is
  $$
    X = \frac{\partial}{\partial x} + z\frac{\partial}{\partial y} + (xy + y^nP(x,y))\frac{\partial}{\partial z}.
  $$ 
This equation has a solution $y=z=0$. The first normal variational equation along this curve is 
$$
  \frac{\partial}{\partial x}  +   z^{(1)}\frac{\partial}{\partial y^{(1)}}  + x y^{(1)}\frac{\partial}{\partial z^{(1)}}
$$
Using a parameterisation $x=t$ of this curve, we  get a linear system:

$$ \frac{d}{dt}Y = A \; Y \quad \textrm{ with } \quad A=\left( \begin {array}{cc} 0&1\\ \noalign{\medskip}t&0\end {array}
 \right).$$
It is easily seen, from the form of the equation, that the variational equations of order less than $n$ bring no new information, 
because of the term in $y^n$. 
Letting $y=\sum_{i=1}^n y^{(i)} \frac{\epsilon^i}{i!}$ and $p(x) = n!P(x,0)$, we have 
	$$ x\; y \; + \; y^n P(x,y) = \sum_{i=1}^{n-1} x y^{(i)} \epsilon^i + \left( x y^{(n)}+ (y^{(1)})^n p(x)\right) \epsilon^n + o(\epsilon^n)$$
and the $n$-th order normal variational equation along the solution $y=z=0$ is
  $$
    \frac{\partial}{\partial x}  + \left( \sum_{k=1}^{n-1} z^{(k)}\frac{\partial}{\partial y^{(k)}} + x y^{(k)}\frac{\partial}{\partial z^{(k)}} \right)+ z^{(n)}\frac{\partial}{\partial y^{(n)}} +  (x y^{(n)} + p(x) ({y^{(1)}})^n)\frac{\partial}{\partial z^{(n)}} 
  $$
The linearized  normal variational system can be reduced to 
  $$
(NVE_{n}): \qquad   \frac{d}{dt}
     \left(\begin{matrix}
       \vdots \\ \vdots \\
       {n \choose k} (y^{(1)})^{n-k} (z^{(1)})^{k}\\
       \vdots \\
       \hline \\
       y^{(n)} \\
       z^{(n)} 
     \end{matrix}\right)
    =  
     \left( 
       \begin{array}{ccc|cc}
         & & & \multicolumn{2}{c}{\vdots}\\
         & \sym^n{\left(\begin{matrix}0&1\\ t & 0 \end{matrix}\right)} & &  \multicolumn{2}{c}{0} \\
         & & &  \multicolumn{2}{c}{\vdots}\\
         \hline
         0 & \dots &0& 0&1\\
         p(t)&\dots &0 & t&0 \\
       \end{array}
     \right)\cdot
    \left( \begin{matrix}
        \vdots \\ \vdots \\
         {n \choose k} (y^{(1)})^{n-k} (z^{(1)})^{k}\\
        \vdots \\
        \hline \\
        y^{(n)} \\
        z^{(n)} 
      \end{matrix}\right).
  $$ 

\begin{example}
For example, in the case of the second Painlev\'e equation with $a=0$, $(P_2)$, we have $n=3$ and the linearized variational system is 
$$ \frac{d}{dt} Y = \CA \cdot Y \quad \textrm{ with } \quad \CA= 
	\left( \begin {array}{cccc|cc} 
		0&1&0&0&0&0\\ 
		\noalign{\medskip}3\,t&0& 2&0&0&0\\ 
		\noalign{\medskip}0&2\,t&0&3&0&0\\ 
		\noalign{\medskip}0&0&t&0&0&0\\ \hline \\[-0.6cm]
		\noalign{\medskip}0&0&0&0&0&1\\ \noalign{\medskip} 12 &0&0&0&t&0
\end {array}
\right)   \qquad (PNVE_3).$$
\end{example}

\subsection{Reduced Forms and a First Irreducibility Proof of $(P_2)$}

We introduce material from \cite{AEJA,AJA} about the Kolchin-Kovacic reduced forms of linear differential systems.\\
Consider a differential system $[A]:\; Y' = AY$ with $A\in Mat(n,k)$. Let $G$ denote its differential Galois group and $\mathfrak{g}$ its Lie algebra.
Given a matrix $P\in GL(n,\overline{k})$, the change of variable $Y=P.Z$ transforms $[A]$ into a system $Z'=B.Z$, where $
B=PAP^{-1}-P'P^{-1}$. The standard notation is $B=P[A]$. The systems $[A]$ and $[P[A]]$ are called \emph{equivalent over $\overline{k}$}.
The Galois group may change but its Lie algebra $\mathfrak{g}$ is preserved under this transformation.
\\
We say that $[A]$ is \emph{in reduced form} if $A\in\mathfrak{g}(k)$. When it is not the case, we say that a matrix $B\in Mat(n,\overline{k})$ is a 
\emph{reduced form} of $[A]$ if there exists $P\in GL(n,\overline{k})$ such that $B=P[A]$ and  $B\in \mathfrak{g}(\overline{k})$. 
Our technique to find $\mathfrak{g}$, for the variational equations, will be to transform them into reduced form.

\begin{example} The first variational equation of Painlev\'e II has matrix $A_1=\left(\begin{matrix} 0&1\\t&0\end{matrix}\right)$. This corresponds to the Airy equation and its Galois group is known to be $SL(2,\CC)$. Obviously, $A_1 \in \mathfrak{sl}(2,\CC(t))$ so the first variational equation is in reduced form.
\end{example}

Let $a_1(x),\ldots, a_r(x)\in k$ be a basis of the  $\mathbb{C}$-vector space generated by the coefficients of $A$. We may decompose $A$ as 
$A=\sum_{i=1}^s a_i(x) M_i$, where the $M_i$ are constant matrices (and $s$ is minimal). The \emph{Lie algebra associated to $A$}, denoted $Lie(A)$ is the \emph{algebraic Lie algebra generated by the $M_i$}: it is the smallest Lie algebra which contains the $M_i$ and is also the Lie algebra of some (connected) linear algebraic group $H$, see \cite{AEJA}.

\begin{example} \label{painleve-example}
We compute the Lie algebra $Lie(\CA)$ associated to $\CA$ in system $(PNVE_3)$.
Let
$$ X:=  {\scriptsize{\left( \begin {array}{cccc|cc} 0&1&0&0&0&0\\ \noalign{\medskip}0&0&2&0
&0&0\\ \noalign{\medskip}0&0&0&3&0&0\\ \noalign{\medskip}0&0&0&0&0&0 \\ \hline \\[-0.6cm]
 \noalign{\medskip}0&0&0&0&0&1\\ \noalign{\medskip}0&0&0&0&0&0
\end {array}\right) }}
= \left( 
  \begin{array}{c|c}
     \sym^n\left(\begin{matrix} 0& 1\\ 0&0 \end{matrix}\right) & 0 \\
     \hline
     0 & \begin{matrix} 0& 1\\ 0&0 \end{matrix} \\
  \end{array}
\right),
   \; 
Y:=    {\scriptsize{\left( \begin {array}{cccc|cc} 0&0&0&0&0&0\\ \noalign{\medskip}3&0&0&0
&0&0\\ \noalign{\medskip}0&2&0&0&0&0\\ \noalign{\medskip}0&0&1&0&0&0 \\ \hline\\[-0.6cm] 
\noalign{\medskip}0&0&0&0&0&0\\ \noalign{\medskip}0&0&0&0&1&0
\end {array}\right) }}
= \left( 
    \begin{array}{c|c}
    \sym^n\left(\begin{matrix} 0& 0\\ 1&0 \end{matrix}\right) & 0 \\
    \hline
    0 & \begin{matrix} 0& 0\\ 1&0 \end{matrix} \\
  \end{array}
\right) , \; 
$$
and
$$  
H:=[X,Y]=  {\scriptsize{\left( \begin {array}{cccc|cc} 3&0&0&0&0&0\\ \noalign{\medskip}0&1&0&0
&0&0\\ \noalign{\medskip}0&0&-1&0&0&0\\ \noalign{\medskip}0&0&0&-3&0&0 \\ \hline\\[-0.6cm]
 \noalign{\medskip}0&0&0&0&1&0\\ \noalign{\medskip}0&0&0&0&0&-1
\end {array}\right)}}
$$
(standard $\mathfrak{sl}_2$ triplet with $[H,X]= 2\; X$, $[H,Y]= - 2\; Y$)
and we introduce the off-diagonal matrices 
$$ E_i = \left( 
       \begin{array}{ccc|c}
         \ddots & & & \multicolumn{1}{c}{\vdots}\\
         & 0 & &  \multicolumn{1}{c}{0} \\
         & & \ddots&  \multicolumn{1}{c}{\vdots}\\
         \hline
         & B_i & & 0\\
      
       \end{array}
     \right)
$$
where
$$B_0= {\scriptsize{\left( \begin {array}{cccc} 0&0&0&0\\ \noalign{\medskip}1&0&0&0\end {array}\right)}}
, 
B_1 ={\scriptsize{\left( \begin {array}{cccc} +1&0&0&0\\ \noalign{\medskip}0&-1&0&0\end {array}\right)}}
,
B_{{2}}= {\scriptsize{\left( \begin {array}{cccc} 0&-1&0&0\\ \noalign{\medskip}0&0&1&0\end {array}\right)}},
B_{{3}}= {\scriptsize{\left( \begin {array}{cccc} 0&0&+1&0\\ \noalign{\medskip}0&0&0&-1\end {array}\right)}},
B_{{4}}={\scriptsize{ \left( \begin {array}{cccc} 0&0&0&-1\\ \noalign{\medskip}0&0&0&0\end {array}\right)}}. 
$$
Moreover:  $[X,E_i]= (i+1) E_{i+1}$, $[Y,E_i]= (5-i) E_{i-1}$, $[H,E_i]=(-4+2i) E_i$ and $[E_i,E_j]=0$
 (with $E_{-1}=E_5=0$). 
 We now show that $Lie(\CA)$  is generated (as a Lie algebra) by $X$, $Y$ and $E_1$. 
 Indeed, $Lie(\CA)$ is generated (as a Lie algebra) by $M_1:=X+  E_1$ and $M_2:=Y$;
 then $[M_1,M_2]=H$ and $[M_1,H]=-2X-4E_1$ so $[M_1,H]+2 M_1 = -2 E_1$ and so $E_1\in Lie(\CA)$.
 
 The above calculations then shows that $Lie(\CA)$ has dimension 8 and a basis for it is $\{ X,Y,H, E_0,\ldots, E_4\}$. 
 We admit, this is proved later, that this Lie algebra is actually an algebraic Lie algebra.

%
 
 \end{example}
 
 
 A theorem of Kolchin  (\cite{put-singer}, proposition 1.31) shows that $\mathfrak{g}\subset Lie(A)$ (and that $G\subset H$). A reduced form is obtained when we achieve equality in that inclusion. Moreover, when $G$ is connected (which will be the case in this paper), the reduction theorem of Kolchin and Kovacic (\cite{put-singer}, corollaire 1.32) shows that a reduced form exists and that the reduction matrix $P$ may be chosen in $H(k)$.
\\

Let us now  continue the above examples with the third variational equation of Painlev\'e II.\\
Denote by $\mathfrak{h}_{diag}$ the Lie algebra generated by the block-diagonal elements $X,Y,H$.
Similarly, let $\mathfrak{h}_{sub}$ be the Lie algebra generated by the off-diagonal matrices $E_i$.
Of course, $\mathfrak{h}_{diag}$ is $\mathfrak{sl}_2$ in its representation on a direct sum $\Sym^n(\mathbb{C}^2)\oplus \mathbb{C}^2$.
\\
Letting $\mathfrak{h}:=Lie(A)$, we see that $\mathfrak{h} = \mathfrak{h}_{diag} \oplus \mathfrak{h}_{sub}$. It is easily seen that 
$\mathfrak{h}_{diag}$ is a Lie subalgebra of $\mathfrak{h}$ and that $\mathfrak{h}_{sub}$ is an ideal in $\mathfrak{h}$.
\\
 
We have seen that $\mathfrak{g} \subset Lie(A)$. Furthermore, $\mathfrak{h}_{diag} \subset \mathfrak{g}$ 
(because $VE_1$ has Galois group $SL_2(\mathbb{C})$). It follows that $\mathfrak{g} = \mathfrak{h}_{diag} \oplus \tilde{\mathfrak{g}}$, where $\tilde{\mathfrak{g}} \subset \mathfrak{h_{sub}}$ is an ideal in $\mathfrak{g}$; in particular, it is closed under the bracket with elements of $\mathfrak{h}_{diag}$ (adjoint action of $\mathfrak{h}_{diag}$ on $\mathfrak{h_{sub}}$). 
\\

Now the only invariant subsets of $\mathfrak{h}_{sub}$ under this adjoint action are seen to be $\{0\}$ and $\mathfrak{h}_{sub}$ (this is reproved and generalized in proposition \ref{irreducibility-prop} below and its lemmas).
So the Lie algebra $\mathfrak{g}$ is either $\mathfrak{sl}_2$ (of dimension 3) or $\mathfrak{h}$ (of dimension $8$).
\\
 
 As the Galois group of the block-diagonal part is connected, the differential Galois group $G$ of $[\mathcal{A}]$  is connected.
Hence we know (by the reduction theorem of Kolchin and Kovacic cited above)  that there exists a reduction matrix $P\in H(k)$. Furthermore,  as the block-diagonal part of $\CA$ is already in reduced form,  the block-diagonal part of the reduction matrix $P$ may be chosen to be the identity.  
 So there exists a reduction matrix  of the form 
 	$$P = \textrm{Id} + \sum_{i=1}^5 f_i(t) E_i, \; \textrm{ with }Ê\; f_i(t) \in \mathbb{C}(t).$$
 \\
 
 A simple calculation shows that $P\mathcal{A} P^{-1} = \mathcal{A} + \sum_{i=1}^5 f_i(t) [E_i, X+t Y]$ and hence 
	$$ P[\mathcal{A}] = X + t Y + E_1 + \sum_{i=1}^5 f_i(t) [E_i, X+t Y] - \sum_{i=1}^5 f_i'(t) E_i.$$ 
We see that the case 
 $\mathfrak{g}=\mathfrak{sl}_2$  happens if and only if we can find $f_i\in \CC(t)$ such that $\sum_{i=1}^5 f_i'(t) E_i = \sum_{i=1}^5 f_i(t) [ X+t Y, E_i] + E_1$.
 Let $\Psi$ denote the matrix of the adjoint action $[ X+t Y, \bullet]$ of $X+ t Y$ on $\mathfrak{h}_{sub}$. 
 We see that  $\mathfrak{g}=\mathfrak{sl}_2$ iff we can find an $F\in \CC(t)^5$  solution of the differential system 
 	$$ F' = \Psi\cdot F + \left(\begin{matrix} 1 \\ 0 \\\vdots \\ 0 \end{matrix}\right).$$
 We now gather the properties of $(P_2)$ elaborated in this sequence of examples.
 
 \begin{proposition} The Painlev\'e II equation is irreducible when the parameter $a = 0$. \end{proposition}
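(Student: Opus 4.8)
The plan is to deduce the Proposition from the Main Theorem (Theorem~\ref{main-theorem}), applied --- as permitted by the remark following its restatement --- to the \emph{normal} variational equations along $\Cc=\{y=z=0\}$, with the closed forms $\alpha=dx$ and $\gamma=\iota_X(dx\wedge dy\wedge dz)$ and with $k=3$ (indeed $(P_2)$ at $a=0$ is $y''=xy+2y^3$, of the form $y''=xy+y^nP$ with $n=3$, $P=2$). The hypotheses on $M=\CC^3$, on $X$, and on $\alpha,\gamma,\Cc$ have already been verified above, so only the two Galoisian conditions remain. The first is immediate: $(NVE_1)$ is the Airy equation, whose differential Galois group is $SL(2,\CC)$, which is not virtually solvable (it is connected and non-solvable). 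The whole content of the proof lies in the second condition, namely that the Galois group of $(NLVE_3)=(PNVE_3)$ has dimension strictly greater than $5$.

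By the sequence of examples preceding the Proposition, the Lie algebra $\mathfrak{g}$ of the Galois group of $(PNVE_3)$ satisfies $\mathfrak{h}_{diag}\cong\mathfrak{sl}_2\subseteq\mathfrak{g}\subseteq\mathfrak{h}:=Lie(\CA)=\mathfrak{h}_{diag}\oplus\mathfrak{h}_{sub}$, and, the only $\mathfrak{h}_{diag}$-invariant subspaces of $\mathfrak{h}_{sub}$ being $\{0\}$ and $\mathfrak{h}_{sub}$, it follows that $\mathfrak{g}$ is either $\mathfrak{sl}_2$ (dimension $3$) or $\mathfrak{h}$ (dimension $8$). Moreover $\mathfrak{g}=\mathfrak{sl}_2$ if and only if the inhomogeneous linear system $F'=\Psi\cdot F+b$ admits a solution $F\in\CC(t)^5$, where $\Psi$ is the matrix of $\mathrm{ad}(X+tY)$ on $\mathfrak{h}_{sub}\cong\Sym^4(\CC^2)$ and $b$ is the constant column vector coming from the off-diagonal term of $\CA$. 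Thus it suffices to prove that this system has \emph{no} rational solution; then $\mathfrak{g}=\mathfrak{h}$, the Galois group of $(NLVE_3)$ has dimension $8>5$, and Theorem~\ref{main-theorem} yields $Mal(X)=\{\varphi\mid\varphi^\ast\alpha=\alpha,\ \varphi^\ast\gamma=\gamma\}$, hence the irreducibility of $(P_2)$ at $a=0$.

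To prove the non-existence of a rational solution, I would first make $\Psi$ explicit from the bracket relations $[X,E_i]=(i+1)E_{i+1}$ and $[Y,E_i]=(5-i)E_{i-1}$: one gets $[X+tY,E_i]=(i+1)E_{i+1}+t(5-i)E_{i-1}$, so that $\Psi$ is a $5\times5$ matrix with entries affine in $t$ --- it is, up to a change of basis, the symmetric fourth power of the Airy operator. Since $\Psi$ and $b$ are polynomial in $t$, the system $F'=\Psi F+b$ has no finite singular point, and therefore any rational solution must be a vector of polynomials $F=(f_0,\dots,f_4)\in\CC[t]^5$. A degree count along the chain $f_4,f_3,f_2,f_1,f_0$ then finishes the job: each equation of the system determines the degree and leading coefficient of the next unknown from the previous ones, and matching leading terms in the last equation produces a linear equation for $\deg f_4$ whose only solution is $\deg f_4=-1$; hence $f_4=0$, and working back up the chain forces $f_3=f_2=f_1=f_0=0$, which contradicts the surviving nonzero constant in $b$. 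So no rational solution exists, and $\mathfrak{g}=\mathfrak{h}$.

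The main obstacle is exactly this last computation: the non-solvability of $F'=\Psi F+b$ over $\CC(t)$ cannot be disposed of by a soft argument --- the relevant obstruction is an $\mathrm{Ext}^1$ in the category of differential modules over $\CC(t)$ which need not vanish --- so one is forced to exhibit $\Psi$ concretely and run the degree/leading-coefficient bookkeeping. A secondary point to pin down is that $\mathfrak{h}=Lie(\CA)$ is genuinely an \emph{algebraic} Lie algebra: this is needed both for the dichotomy on $\mathfrak{g}$ and for the Kolchin--Kovacic reduction theorem that puts $\CA$ in the normalized form $\mathrm{Id}+\sum f_i E_i$ from which the criterion above is extracted. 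That fact is established, in greater generality, in Proposition~\ref{irreducibility-prop} and its lemmas, which we invoke here; once it and the degree count are in place, the conclusion $\dim\mathfrak{g}=8>5$ together with the Main Theorem completes the proof.
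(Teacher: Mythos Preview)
Your proposal follows exactly the paper's route: reduce to Theorem~\ref{main-theorem} via the normal variational equations, use the $\mathfrak{sl}_2$/$\mathfrak{h}$ dichotomy established in the running example, and conclude by showing the auxiliary system $F'=\Psi F+b$ has no rational solution. The one substantive difference is in how you dispatch that last point. The paper's proof of this Proposition simply invokes the Barkatou algorithm (and its Maple implementation) to certify the non-existence of a rational solution; you instead sketch a direct degree/leading-coefficient argument on the $5\times 5$ system. Your argument is sound --- since $\Psi$ and $b$ are polynomial, a rational solution is polynomial, and tracking leading coefficients down the chain from $f_4$ to $f_0$ and back into the first equation does force $8\deg f_4=-8$, whence $f_4=0$ and then all $f_i=0$, contradicting $b\neq 0$ --- though your sketch suppresses the actual bookkeeping. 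The paper in fact gives a very similar by-hand argument a little later (the Corollary following Proposition~\ref{irreducibility-criterion-airy}), but only after converting the system to the scalar equation $L_4(y)=12$ with $L_4=\sym^4(\partial_t^2-t)$, where the degree count becomes a one-liner: $L_4$ raises polynomial degree by exactly one, so $12$ is not in its image. That scalar reduction is cleaner than working directly on the system and is worth adopting if you want to make the hand computation rigorous without the detailed coefficient chase.
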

 \begin{proof} Using the Barkatou algorithm and its Maple implementation \cite{Ba99a, BaClElWe12a}, one easily sees that the above differential system does not have a rational solution. If follows that,using the notations of the above examples, we have $\mathfrak{g}=\mathfrak{h}$ of dimension 8. 
 So, for $(P_2)$, we have:  the Galois group of the first variational equation is $SL(2,\CC)$ which is not virtually solvable; the Galois group of the third variational equation has dimension $8>5$. Theorem 1 thus shows that the Painlev\'e II equation is irreducible. 
 \end{proof}

We will now generalize this process to all equations of the form $\frac{d^2 y}{dx^2} = xy +y^nP(x,y)$. We will elaborate a much easier irreducibility criterion, which will allow to reprove the above proposition without having to trust a computer.


\subsection{The Galois Group of the $n$-th Variational Equation} 

The aim of this subsection is to prove the following:
\begin{proposition}\label{irreducibility-prop}
  The Galois group of the $n$-th variational equation $(LNVE_{n})$ is either $SL_{2}(\CC)$ or its dimension is $n+5$ and then the differential equation $y'' = xy +y^nP(x,y)$ 
  is irreducible.
\end{proposition}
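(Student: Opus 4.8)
The plan is to generalize the sequence of examples worked out for $(P_2)$ to the general system $(LNVE_n)$, whose coefficient matrix is block lower-triangular with diagonal blocks $\Sym^n\left(\begin{smallmatrix}0&1\\t&0\end{smallmatrix}\right)$ and $\left(\begin{smallmatrix}0&1\\t&0\end{smallmatrix}\right)$, and a single off-diagonal coupling block involving $p(t)$. First I would set up the Lie-algebraic framework exactly as in Example \ref{painleve-example}: write the diagonal part as $X + tY$ with the standard $\mathfrak{sl}_2$-triple $(X,Y,H)$ acting on $\Sym^n(\CC^2)\oplus\CC^2$, and describe the space of off-diagonal matrices $\mathfrak{h}_{sub}$ as an $\mathfrak{sl}_2$-module. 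The coupling block lies in $\mathfrak{h}_{sub}$, and $\mathfrak{h}:=Lie(A) = \mathfrak{h}_{diag}\oplus\mathfrak{h}_{sub}$ with $\mathfrak{h}_{diag}\cong\mathfrak{sl}_2$ a subalgebra and $\mathfrak{h}_{sub}$ an abelian ideal. The key structural point is to identify $\mathfrak{h}_{sub}$, as an $\mathfrak{sl}_2$-representation, with $\mathrm{Hom}(\Sym^n\CC^2,\CC^2)\cong \Sym^n\CC^2\otimes(\CC^2)^\ast \cong \Sym^n\CC^2\otimes\CC^2$, which by Clebsch–Gordan decomposes as $\Sym^{n+1}\CC^2\oplus\Sym^{n-1}\CC^2$ — but the relevant sub-block (the last two rows only) picks out precisely one irreducible piece of dimension $n+2$. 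This is where the lemmas referred to in the text ("proved later... and its lemmas") do the work: showing this piece is an \emph{irreducible} $\mathfrak{sl}_2$-module, so its only invariant subspaces are $\{0\}$ and the whole thing.

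Next, since $VE_1$ has Galois group $SL_2(\CC)$, we get $\mathfrak{h}_{diag}\subset\mathfrak{g}$, hence by the same argument as in the $(P_2)$ case $\mathfrak{g}=\mathfrak{h}_{diag}\oplus\tilde{\mathfrak{g}}$ with $\tilde{\mathfrak{g}}\subset\mathfrak{h}_{sub}$ stable under the adjoint action of $\mathfrak{h}_{diag}$. By the irreducibility lemma just cited, either $\tilde{\mathfrak{g}}=0$ (so $\mathfrak{g}=\mathfrak{sl}_2$, $\dim 3$) or $\tilde{\mathfrak{g}}=\mathfrak{h}_{sub}$ (so $\mathfrak{g}=\mathfrak{h}$, of dimension $3+(n+2)=n+5$). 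I should also confirm $\mathfrak{h}$ is an \emph{algebraic} Lie algebra — this follows because $\mathfrak{sl}_2$ is algebraic and a semidirect product of an algebraic reductive Lie algebra with an abelian ideal on which it acts rationally is algebraic (Chevalley). To conclude the irreducibility statement, I invoke Theorem 1 (in the $(NVE_k)$/$(NLVE_k)$ form noted after its restatement): the first variational equation is the Airy equation with Galois group $SL_2(\CC)$, which is not virtually solvable; and if $\mathfrak{g}=\mathfrak{h}$ then $\dim G = n+5 \geq 7 > 5$ (since $n\geq 2$), so the hypotheses of Theorem 1 are met and $y''=xy+y^nP(x,y)$ is irreducible.

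The main obstacle — the only genuinely new content beyond bookkeeping — is the representation-theoretic lemma: proving that the sub-block of $\mathfrak{h}_{sub}$ on which $\mathfrak{sl}_2$ acts is irreducible, i.e. that there is no nontrivial proper $(X+tY)$-invariant (equivalently, $\mathfrak{h}_{diag}$-invariant) subspace. I would do this by the standard highest-weight/$\mathfrak{sl}_2$-module technique: identify the weight of $H$ on each $E_i$ (the text gives $[H,E_i]=(-4+2i)E_i$ in the $n=3$ case; in general the weights form an unbroken string $-(n+1),-(n-1),\dots,(n+1)$ in steps of $2$), check that $\mathrm{ad}(X)$ and $\mathrm{ad}(Y)$ act as nonzero raising/lowering operators connecting consecutive weight spaces (so no invariant subspace can omit an intermediate $E_i$), and hence any nonzero invariant subspace must be everything. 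One subtlety worth flagging: the full module $\Sym^n\CC^2\otimes\CC^2$ is \emph{not} irreducible (it splits as $\Sym^{n+1}\oplus\Sym^{n-1}$ by Clebsch–Gordan), so I must be careful that the coupling block in $(LNVE_n)$, and the matrices $E_i$ generated from it under bracketing with $X$ and $Y$, span exactly one irreducible summand — the $\Sym^{n+1}$ piece of dimension $n+2$ — matching the claimed $\dim\mathfrak{g}=n+5$. Verifying that the $E_i$ obtained by repeatedly bracketing the single coupling matrix $p(t)$-block with $X$ and $Y$ land in and span the $(n+2)$-dimensional summand (and not the smaller one) is the crux of the computation.
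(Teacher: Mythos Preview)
Your proposal is correct and follows essentially the same route as the paper: identify $Lie(\CA)=\mathfrak{sl}_2\rtimes\Sym^{n+1}(\CC^2)$ by showing the coupling matrix $E_0$ lies in the $\Sym^{n+1}$ summand of the Clebsch--Gordan decomposition, then use $\mathfrak{sl}_2\subset\mathfrak{g}\subset Lie(\CA)$ together with irreducibility of $\Sym^{n+1}$ to force $\mathfrak{g}$ to be one of the two endpoints, and finish with Theorem~1. The paper packages this as two lemmas (the first giving the four a~priori dimensions $3,\,n+3,\,n+5,\,2n+5$ from Clebsch--Gordan, the second pinning down $Lie(\CA)$ via a vector-field model of $\mathfrak{sl}_2$ acting on homogeneous vector fields on $\CC^2$), but the substance is the same as what you outline.

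One small omission worth flagging: your plan to ``repeatedly bracket the coupling matrix with $X$ and $Y$'' tacitly assumes $X$ and $Y$ are already in $Lie(\CA)$, which is only immediate when $1,\,t,\,p(t)$ are linearly independent over $\CC$. When $p(t)=a+bt$, the generators of $Lie(\CA)$ are $M_1=X+aE_0$ and $M_2=Y+bE_0$, and you must extract $E_0$ (hence $X$ and $Y$) from iterated brackets of $M_1,M_2$ before your argument runs. The paper handles this degenerate case by a short explicit computation; you should do the same.
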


\subsubsection{Adjoint Action}
\begin{lemma}
Let $A$ be a $2 \times 2$ matrix of rational function of the variable $t$ such that the Galois group $G_1$ of the differential system
$
\frac{dY}{dt} = A Y
$
has Lie algebra $\mathfrak{sl}_{2}$.
Consider  a system
$$
\frac{d}{dt}
\left(\begin{matrix}
  Z\\
  Y 
\end{matrix}\right)
= 
\left(
\begin{array}{c|c}
  \sym^n A & 0 \\
 \hline
  B &A\\
\end{array}
\right)
\left(\begin{matrix}
  Z\\
  Y 
\end{matrix}\right)
$$
with differential Galois group $G$. 
Then $G$ has dimension $3$ or $n+3$ or $n+5$ or $2n+5$.
\end{lemma}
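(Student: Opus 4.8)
The setup is a block lower-triangular system whose diagonal blocks are $\Sym^n A$ and $A$, with $A$ having Galois group Lie algebra $\mathfrak{sl}_2$. The strategy is to identify $\mathfrak{g} = \mathrm{Lie}(G)$ as a subalgebra of the Lie algebra $\mathfrak{h}$ of block lower-triangular matrices of the given shape, split $\mathfrak{h} = \mathfrak{h}_{\mathrm{diag}} \oplus \mathfrak{h}_{\mathrm{sub}}$ where $\mathfrak{h}_{\mathrm{diag}} \cong \mathfrak{sl}_2$ acts diagonally on $\Sym^n(\CC^2)\oplus\CC^2$ and $\mathfrak{h}_{\mathrm{sub}} = \mathrm{Hom}(\Sym^n\CC^2,\CC^2)$ is an abelian ideal, and then use that the Galois correspondence forces $\mathfrak{g}$ to project onto $\mathfrak{h}_{\mathrm{diag}}$ (since the diagonal subsystem, a direct sum of $\Sym^n$ of the $\mathfrak{sl}_2$-system and the $\mathfrak{sl}_2$-system itself, already has Galois Lie algebra $\mathfrak{h}_{\mathrm{diag}}$). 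First I would record that $\mathfrak{g}$ surjects onto $\mathfrak{h}_{\mathrm{diag}}$ and that $\mathfrak{g}\cap\mathfrak{h}_{\mathrm{sub}}$ is an ideal of $\mathfrak{g}$, hence an $\mathfrak{h}_{\mathrm{diag}}$-submodule of $\mathfrak{h}_{\mathrm{sub}}$; and that $\mathfrak{g}$ is determined by $\mathfrak{g}\cap\mathfrak{h}_{\mathrm{sub}}$ together with a graph-like "cocycle" piece.

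\medskip
The key structural input is the decomposition of $\mathfrak{h}_{\mathrm{sub}} = \mathrm{Hom}(\Sym^n\CC^2, \CC^2)$ as an $\mathfrak{sl}_2$-module. As a tensor product $\Sym^n(\CC^2)^\vee \otimes \CC^2 \cong \Sym^n(\CC^2)\otimes \Sym^1(\CC^2)$, Clebsch–Gordan gives $\mathfrak{h}_{\mathrm{sub}} \cong V_{n+1}\oplus V_{n-1}$, the irreducible $\mathfrak{sl}_2$-modules of dimensions $n+2$ and $n$. Therefore the $\mathfrak{h}_{\mathrm{diag}}$-submodules of $\mathfrak{h}_{\mathrm{sub}}$ are exactly $0$, $V_{n+1}$, $V_{n-1}$, and $V_{n+1}\oplus V_{n-1}$, of dimensions $0$, $n+2$, $n$, $2n+2$. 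This yields four candidate values for $\dim(\mathfrak{g}\cap\mathfrak{h}_{\mathrm{sub}})$.

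\medskip
Next I would analyze how $\mathfrak{g}$ can fail to contain $\mathfrak{g}\cap\mathfrak{h}_{\mathrm{sub}}$ as a direct summand: writing $\mathfrak{g} = \{\,(\xi, \phi(\xi)+\eta) : \xi\in\mathfrak{h}_{\mathrm{diag}},\ \eta\in\mathfrak{g}\cap\mathfrak{h}_{\mathrm{sub}}\,\}$ for a linear map $\phi: \mathfrak{h}_{\mathrm{diag}}\to\mathfrak{h}_{\mathrm{sub}}/(\mathfrak{g}\cap\mathfrak{h}_{\mathrm{sub}})$, the Jacobi identity forces $\phi$ to be a Lie algebra $1$-cocycle, i.e. an element of $Z^1(\mathfrak{sl}_2, \mathfrak{h}_{\mathrm{sub}}/(\mathfrak{g}\cap\mathfrak{h}_{\mathrm{sub}}))$. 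Since $\mathfrak{sl}_2$ is semisimple, Whitehead's lemma gives $H^1(\mathfrak{sl}_2, W)=0$ for every module $W$, so every such cocycle is a coboundary, meaning the graph can be conjugated into a genuine direct sum by an element of the (unipotent) group with Lie algebra $\mathfrak{h}_{\mathrm{sub}}$. Hence $\dim\mathfrak{g} = \dim\mathfrak{h}_{\mathrm{diag}} + \dim(\mathfrak{g}\cap\mathfrak{h}_{\mathrm{sub}}) = 3 + \{0, n, n+2, 2n+2\} = \{3,\ n+3,\ n+5,\ 2n+5\}$, which is exactly the claimed list.

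\medskip
The main obstacle is the cohomological step: one must be careful that $\mathfrak{g}$ is really the Lie algebra of the full Galois group and that the algebraic-group structure (not merely the Lie algebra) also splits, so that "dimension" is computed correctly; this is where one invokes that $G$ is connected (the diagonal block is connected, being $SL_2$ acting on $\Sym^n\CC^2\oplus\CC^2$, and the unipotent radical contributes a connected factor) and that $\mathfrak{h}_{\mathrm{sub}}$ is abelian so the corresponding unipotent group is a vector group on which conjugation is affine. The Clebsch–Gordan computation and the Whitehead vanishing are then routine. I would also note in passing that each of the four dimensions is realized — $3$ when $B$ can be gauged away, $2n+5$ generically — though for the proposition only the list of possibilities is needed.
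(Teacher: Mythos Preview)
Your proposal is correct and follows essentially the same route as the paper: identify $\mathfrak{g}$ inside the block lower-triangular algebra $\mathfrak{h}$, note that the diagonal projection $\mathfrak{g}\to\mathfrak{h}_{\mathrm{diag}}\cong\mathfrak{sl}_2$ is surjective with kernel $\mathfrak{g}\cap\mathfrak{h}_{\mathrm{sub}}$, and then list the $\mathfrak{sl}_2$-submodules of $\mathfrak{h}_{\mathrm{sub}}\cong(\CC^2)^\vee\otimes\Sym^n(\CC^2)\cong\Sym^{n+1}(\CC^2)\oplus\Sym^{n-1}(\CC^2)$ via Clebsch--Gordan.

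One remark: your cocycle/Whitehead argument, while correct and yielding the finer statement that $\mathfrak{g}$ actually splits as $\mathfrak{sl}_2\ltimes(\mathfrak{g}\cap\mathfrak{h}_{\mathrm{sub}})$, is not needed for the lemma as stated. The dimension formula $\dim\mathfrak{g}=3+\dim(\mathfrak{g}\cap\mathfrak{h}_{\mathrm{sub}})$ already follows from rank--nullity for the surjection $\mathfrak{g}\twoheadrightarrow\mathfrak{sl}_2$; the paper simply uses this and does not invoke $H^1$-vanishing. (The paper instead remarks that $G$ contains a copy of $SL_2$ coming from the subsystem, which amounts to the same splitting but obtained via Galois correspondence rather than cohomology.)
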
 
\begin{proof}
Let $\mathfrak{g}$ be the Lie algebra of $G$. 
It has a block lower triangular form shaped by the form of the system {\it i.e.} if $\mathfrak{g} \subset  \mathfrak{h} \subset \mathfrak{gl}_{n+3}$ where
$$ \mathfrak{h} = \left\{
\left( 
\begin{array}{c|c}
  \sym^n a & 0 \\
 \hline
  b & a\\
\end{array}
\right)
,a \in \mathfrak{sl}_{2}, b\in (\mathbb{C}^2)^\vee \otimes \Sym^n(\CC^2)\right\}.$$
The south-east block $A$ defines a subsystem thus $G$ contains a subgroup isomorphic to $SL_{2}$.
The north-west block defines a quotient system so there is a surjective group morphism from  $G$ onto $SL_{2}$. 
The kernel of this map is a commutative ideal (the off-diagonal matrices $E_i$, in our examples) 
and inherits a structure of $\mathfrak{sl}_{2}$-module for the inclusion of $\mathfrak{sl}_{2}$ in $\mathfrak{h}$ {\it via} $\mathfrak{g}$. As a representation, 
$\mathfrak{g} \cap (\mathbb{C}^2)^\vee \otimes \Sym^n(\CC^2)$ is a subspace of $(\mathbb{C}^2)^\vee \otimes \Sym^n(\CC^2)$. This representation is nothing but the adjoint representation. The decomposition in irreducible representations is 
(see ex. 11.11 in \cite{fulton-harris}, part 11.2 p. 151, or next paragraph)
$$
(\mathbb{C}^2)^\vee \otimes \Sym^n(\CC^2) = \Sym^{n-1}(\CC^2) \oplus \Sym^{n+1}(\CC^2).
$$ 
%
So the Lie algebra $\mathfrak{g}$ is either $\mathfrak{sl}_{2}$, or $\mathfrak{sl}_{2} \rtimes \Sym^{n-1}(\CC^2)$, or $\mathfrak{sl}_{2} \rtimes \Sym^{n+1}(\CC^2)$
or $\mathfrak{sl}_{2} \rtimes \left(\Sym^{n-1}(\CC^2) \oplus \Sym^{n-1}(\CC^2)\right)$.
Its dimension is then $3$ or $n+3$ or $n+5$ or $2n+5$.
\end{proof}

\subsubsection{Vector Field Interpretation}
In order to compute easily, we will use the following identification.
The Lie algebra $\mathfrak{sl}_{2}$ may be viewed as a Lie algebra of linear vector fields on $\CC^2$ namely $\CC X + \CC H+ \CC Y$ with 
$X = x \frac{\partial}{\partial y}$, $H = x \frac{\partial}{\partial x} - y \frac{\partial}{\partial y}$ et $Y = y \frac{\partial}{\partial x}$.
These are the same standard $X$, $Y$ and $H$ as the matrices of Example \ref{painleve-example}.
\\
The dual representation $\mathbb{C}^2 \otimes \Sym^n((\CC^2)^\vee)$ is the space of vector fields on $\CC^2$ whose coefficients  are
homogeneous polynomial of degree $n$. 
The decomposition in irreducible representation is the decomposition of any vector field 
	in $\mathbb{C}^2 \otimes \Sym^n((\CC^2)^\vee)$ as 
$$ A\frac{\partial}{\partial x} + B\frac{\partial}{\partial y} = G(x,y) (x\frac{\partial}{\partial x} + y \frac{\partial}{\partial y}) + \frac{\partial K}{\partial y} \frac{\partial}{\partial x} 
- \frac{\partial K}{\partial x} \frac{\partial}{\partial y} $$ 
with\footnote{via $G= \frac{1}{n+1}\left( \frac{\partial A}{\partial x} + \frac{\partial B}{\partial y}\right)$
and $K = \frac{1}{n+1} \left( y A - x B \right)$.}
$G\in \Sym^{n-1}((\CC^2)^\vee)$ and $K \in \Sym^{n+1}((\CC^2)^\vee)$. 

The \emph{symplectic gradient} of a polynomial $K$ will be denoted by 
	$$J\nabla K:=\frac{\partial K}{\partial y} \frac{\partial}{\partial x} 
- \frac{\partial K}{\partial x} \frac{\partial}{\partial y}.$$
If we define
	$$ K_i:=\left( n+1 \choose i\right) x^{n+1-i} y^i 
	\; \textrm{ and } \; E_i:=\frac1{n+1} J\nabla(K_i),$$
then calculation shows that, just like in the previous section, 
	$$ [X, E_i] = (i+1) E_{i+1}, \quad [Y,E_i]=(n+2-i) E_{i-1} \quad \textrm{ and }\quad [H,E_i]=(2i-n-1)E_i.$$

\begin{lemma}
Let $\mathfrak{h}:=Lie(\mathcal{A})$ be the Lie algebra associated to the matrix $\mathcal{A}$ of $(LNVE_{n})$.
Let $G$ denote the differential Galois group of  $(LNVE_{n})$ and $\mathfrak{g}$ be its Lie algebra.
With the standard notations of Example \ref{painleve-example}, we have: 
\begin{enumerate}
	\item $\mathfrak{h}$ is generated, as a Lie algebra, by $X$, $Y$ and $E_0$ and $\mathfrak{h}=\mathfrak{sl}_{2} \rtimes \Sym^{n+1}(\CC^2)$.
	\item Either $\mathfrak{g}=\mathfrak{sl}(2)$ (of dimension 3) or $\mathfrak{g}=\mathfrak{h}$ (of dimension $n+5$).
\end{enumerate}
\end{lemma}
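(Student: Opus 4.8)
The plan is to prove the two assertions in order, exploiting the block-triangular structure of $\mathcal{A}$ and the vector-field dictionary introduced above. For part (1), I would first observe that the coefficients of $\mathcal{A}$ span (over $\CC$) a space with basis $\{1, t, p(t)\}$ (or fewer elements if $p$ is constant, but $1$ and $t$ always appear), so that $\mathcal{A} = X + tY + E_0$ up to the normalization of $p$ — more precisely $\mathcal{A}$ decomposes into the constant matrices $X$, $Y$, and the $E_i$ coming from the lower-left block via the expansion of $y^nP(x,y)$; by the explicit form of $(LNVE_n)$ only $E_0$ (the term $p(t)(y^{(1)})^n$) occurs. Hence $Lie(\mathcal{A})$ is, by definition, the algebraic Lie algebra generated by $X$, $Y$ and $E_0$. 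Using the bracket relations $[X,E_i]=(i+1)E_{i+1}$, $[Y,E_i]=(n+2-i)E_{i-1}$, $[H,E_i]=(2i-n-1)E_i$ together with $[X,Y]=H$ and $[E_i,E_j]=0$, one generates $H$ from $[X,Y]$ and then all $E_1,\dots,E_{n+1}$ from $E_0$ by repeated bracketing with $X$; since the $E_i$ commute among themselves, the Lie algebra closes. This exhibits $Lie(\mathcal{A})=\CC X\oplus\CC H\oplus\CC Y\oplus\bigoplus_{i=0}^{n+1}\CC E_i$, which is $\mathfrak{sl}_2\ltimes \Sym^{n+1}(\CC^2)$ as an $\mathfrak{sl}_2$-module (the $E_i$ span the unique $(n+2)$-dimensional irreducible from the weight computation $[H,E_i]=(2i-n-1)E_i$). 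The one subtlety is that $Lie(\mathcal{A})$ is required to be \emph{algebraic}; here it is automatic because the group $SL_2\ltimes \Sym^{n+1}(\CC^2)$ is algebraic and has exactly this Lie algebra, so no Zariski closure enlarges it.

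For part (2), I would combine Kolchin's inclusion $\mathfrak{g}\subset Lie(\mathcal{A})=\mathfrak{h}$ with the fact, already used for $(P_2)$, that the first variational equation has Galois group $SL_2(\CC)$, so that the block-diagonal copy $\mathfrak{h}_{diag}=\CC X\oplus\CC H\oplus\CC Y$ sits inside $\mathfrak{g}$ (it is the image of $\mathfrak{g}$ under the projection onto the $\Sym^n$-block, which is surjective since that block is the subsystem $VE_1$ tensored up, having Galois group $SL_2$, and it is also a section). Therefore $\mathfrak{g}=\mathfrak{h}_{diag}\oplus\tilde{\mathfrak{g}}$ with $\tilde{\mathfrak{g}}\subseteq\mathfrak{h}_{sub}=\bigoplus_{i=0}^{n+1}\CC E_i$ an $\mathfrak{h}_{diag}$-submodule (an ideal of $\mathfrak{g}$, hence stable under $\mathrm{ad}\,\mathfrak{h}_{diag}$). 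But $\mathfrak{h}_{sub}\cong\Sym^{n+1}(\CC^2)$ is an \emph{irreducible} $\mathfrak{sl}_2$-module, so its only submodules are $0$ and the whole thing. This forces $\tilde{\mathfrak{g}}=0$, i.e.\ $\mathfrak{g}=\mathfrak{sl}_2$ of dimension $3$, or $\tilde{\mathfrak{g}}=\mathfrak{h}_{sub}$, i.e.\ $\mathfrak{g}=\mathfrak{h}$ of dimension $(n+2)+3=n+5$.

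The main obstacle I anticipate is not the irreducibility of $\Sym^{n+1}(\CC^2)$ (standard $\mathfrak{sl}_2$ representation theory) but rather being careful about two points: first, that the lower-left coupling block of $(LNVE_n)$ genuinely lands inside $\mathfrak{h}_{sub}$ and produces $E_0$ and not something degenerate (this is a direct reading of the matrix written above, where the coupling is $p(t)(y^{(1)})^n$, i.e.\ a multiple of $E_0$); and second, the subtle semidirect-product bookkeeping ensuring that the projection $\mathfrak{g}\twoheadrightarrow\mathfrak{sl}_2$ really does split through $\mathfrak{h}_{diag}\subset\mathfrak{g}$ rather than through some ``twisted'' copy of $\mathfrak{sl}_2$ inside $\mathfrak{h}$ — this is where one uses that the $\Sym^n$-block of $\mathcal{A}$ is literally $\sym^n A$ with $A$ the first variational matrix, so its Galois group is genuinely $SL_2$ acting in the standard way, pinning down $\mathfrak{h}_{diag}$ inside $\mathfrak{g}$. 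Once these are in place the dimension count is immediate. The final clause of Proposition~\ref{irreducibility-prop} — that $\mathfrak{g}=\mathfrak{h}$ (dimension $n+5>5$ for $n\geq2$) yields irreducibility of $y''=xy+y^nP(x,y)$ — then follows by applying Theorem~1: $(NVE_1)$ is Airy with Galois group $SL_2(\CC)$, not virtually solvable, and $(NLVE_n)$ has Galois group of dimension $n+5>5$, so the hypotheses of the theorem (in its normal-variational form) are met.
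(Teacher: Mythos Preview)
Your argument for Part (2) is essentially the paper's, and is correct: once you know $\mathfrak{g}\subset\mathfrak{h}=\mathfrak{sl}_2\ltimes\Sym^{n+1}(\CC^2)$ and that $\mathfrak{sl}_2$ (in its block-diagonal incarnation) sits inside $\mathfrak{g}$, the irreducibility of $\Sym^{n+1}(\CC^2)$ forces the dichotomy. The paper routes this through the preceding lemma (which allows four possibilities inside the larger $\mathfrak{sl}_2\ltimes(\Sym^{n-1}\oplus\Sym^{n+1})$) and then intersects with $Lie(\mathcal{A})$, but your direct argument is equivalent.

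Your Part (1), however, has a genuine gap. You write that ``$Lie(\mathcal{A})$ is, by definition, the algebraic Lie algebra generated by $X$, $Y$ and $E_0$,'' but this is only the definition when $1$, $t$, $p(t)$ are linearly independent over $\CC$. You flag parenthetically that there may be fewer basis elements ``if $p$ is constant,'' but the degenerate case is broader: whenever $p(t)=a+bt$ is affine, the minimal decomposition reads $\mathcal{A}=1\cdot(X+aE_0)+t\cdot(Y+bE_0)$, so by definition $Lie(\mathcal{A})$ is the algebraic Lie algebra generated by $M_1:=X+aE_0$ and $M_2:=Y+bE_0$, \emph{not} a priori by $X$, $Y$, $E_0$ separately. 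One must then show by hand that $E_0$ (hence $X$ and $Y$) can be extracted from brackets of $M_1$ and $M_2$. The paper does exactly this: if $b=0$, compute $[M_1,M_2]=H$ and $[M_1,H]-2M_1=a(n-1)E_0$; if $b\neq 0$, let $M_3:=[M_1,M_2]$ and check that $2M_2-[M_3,Y]$ is a nonzero multiple of $E_0$. Without this case analysis your claim that $\mathfrak{h}=\mathfrak{sl}_2\ltimes\Sym^{n+1}(\CC^2)$ is unjustified when $p$ is affine, and the rest of Part (2) then rests on an unproven containment $\mathfrak{g}\subset\mathfrak{h}$.
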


\begin{proof}
With the matrices of Example \ref{painleve-example}, we have $\mathcal{A} = X + t Y + p(t) E_0$. 
As $[X,E_i]=(n+1-i)E_{i+1}$, the Lie algebra generated by $X$, $Y$ and $E_0$
has dimension $n+5$ and may be identified with $\mathfrak{sl}_{2} \rtimes \Sym^{n+1}(\CC^2)$. 
Moreover, a Lie algebra containing $X$, $Y$ and any of the $E_i$ contains $\mathfrak{sl}_{2} \rtimes \Sym^{n+1}(\CC^2)$ (because $[Y,E_i]=(n+2-i) E_{i-1}$).

If $1$, $t$ and $p(t)$  are linearly independent over $\CC$  then  $Lie(\mathcal{A})$ is the algebraic envelop of the Lie algebra generated by $X$, $Y$ and $E_0$; as the latter is algebraic (it is $\mathfrak{sl}_{2} \rtimes \Sym^{n+1}(\CC^2)$), we have $Lie(\mathcal{A})=\mathfrak{sl}_{2} \rtimes \Sym^{n+1}(\CC^2)$. 
Now, we have seen that $\mathfrak{g}$ is either $\mathfrak{sl}_{2}$, or $\mathfrak{sl}_{2} \rtimes \Sym^{n-1}(\CC^2)$, or $\mathfrak{sl}_{2} \rtimes \Sym^{n+1}(\CC^2)$
or $\mathfrak{sl}_{2} \rtimes \left(\Sym^{n-1}(\CC^2) \oplus \Sym^{n-1}(\CC^2)\right)$. Among these, only $\mathfrak{sl}_{2}$ and $\mathfrak{sl}_{2} \rtimes \Sym^{n+1}(\CC^2)$ are in $Lie(\mathcal{A})$, which proves the lemma in that case.
\\

We are left with the case $p(t) = a+bt$ with $(a,b)\in \CC^2$. Then $Lie(\mathcal{A})$ is the algebraic envelop of the Lie algebra generated by
$M_1:=X + a E_0$ and $M_2:= Y + b E_0$.
If $b=0$, then $[M_1,M_2]=H$ and $[M_1,H]=2X + a(n+1) E_0$ so $[M_1,H]-2 M_1 = a (n-1) E_0$. So $Lie(\mathcal{A})$ contains $E_0$ and we are done.
If $b\neq 0$ then let $M_3:=[M_1,M_2]=H+(n+1)^2b E_1$; then $[M_3,Y]= - 2Y - (n+1)b E_0$ so $2 M_2- [M_3,Y]$ is a multiple of $E_0$ and the result is again true.
\end{proof}

\begin{proofof}{Proposition \ref{irreducibility-prop}} Follows from the above two lemmas and Theorem 1. \end{proofof}

\subsection{Irreducibility Criteria}

Thanks to Proposition \ref{irreducibility-prop}, to show irreducibility of $y'' = xy +y^n \frac{P(x,y)}{Q(x,y)}$, it is enough to show
that $(LNVE)_n$ has a Lie algebra not isomorphic to $\mathfrak{sl}(2)$. Using the Kolchin-Kovacic reduction theory, we achieve this by proving (as in our first proof of irreducibility of Painlev\'e II) that there is no reduction matrix that transforms our system to one with Lie algebra $\mathfrak{sl}(2)$.
This gives us the following simple irreducibility criterion.

\begin{proposition}\label{irreducibility-criterion-airy}
 We consider the equation $(E):\; y''=x y + y^n P(x,y)$. Let $p(t):=n!P(t,0)$.
Let $L_{n+1}:=\sym^{n+1}(\partial_t^2-t)$ denote the $(n+1)$-th symmetric power of the Airy equation.
If the equation $L_{n+1}(f)=p(t)$ does not admit a rational solution, then the equation $(E)$ is irreducible.
\end{proposition}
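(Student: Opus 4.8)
The plan is to reduce, via Proposition~\ref{irreducibility-prop}, to showing that the differential Galois group $G$ of $(LNVE_n)$ is \emph{not} $SL_2(\CC)$, and to detect this failure as the non-existence of a rational solution of the linearized reduction equation --- which will turn out to be exactly $L_{n+1}(f)=p(t)$. Recall from the lemmas above that the matrix of $(LNVE_n)$ is $\CA=X+tY+p(t)E_0$, where $X,Y,H$ is the standard $\mathfrak{sl}_2$ triple and $E_0,\dots,E_{n+1}$ are the commuting generators of $\mathfrak{h}_{sub}\cong\Sym^{n+1}(\CC^2)$ with $[X,E_i]=(i+1)E_{i+1}$, $[Y,E_i]=(n+2-i)E_{i-1}$, and that $\mathfrak{g}=Lie(G)$ is either $\mathfrak{sl}_2$ or $\mathfrak{h}=\mathfrak{sl}_2\rtimes\Sym^{n+1}(\CC^2)$ of dimension $n+5$. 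As $n\geq 2$, we have $n+5>5$, so by Theorem~1 (applied as in Proposition~\ref{irreducibility-prop}, which also disposes of the ``not virtually solvable'' hypothesis, since $(VE_1)$ is the Airy equation with Galois group $SL_2(\CC)$) it suffices to rule out $\mathfrak{g}=\mathfrak{sl}_2$.

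For this I would imitate the first irreducibility proof of $(P_2)$. Since $(VE_1)$ is the Airy equation, already in reduced form with Galois group $SL_2(\CC)$, the group $G$ is connected and $\mathfrak{h}_{diag}=\mathfrak{sl}_2\subset\mathfrak{g}$; hence by the Kolchin--Kovacic reduction theorem, $\mathfrak{g}=\mathfrak{sl}_2$ holds if and only if there is a reduction matrix over $\CC(t)$ of the form $P=\mathrm{Id}+N$ with $N=\sum_{i=0}^{n+1}f_i(t)E_i$, $f_i\in\CC(t)$. A short computation --- using that $\mathfrak{h}_{sub}$ is abelian and that each $E_i$ maps into the ``lower block'', so that $N^2=0$, $N\CA N=0$ and $N'N=0$ --- gives
$$P[\CA]=\CA+[N,\CA]-N'=X+tY+\bigl(p(t)E_0+[N,X+tY]-N'\bigr),$$
whose block-diagonal part is unchanged. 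Thus $\mathfrak{g}=\mathfrak{sl}_2$ iff the $f_i$ can be chosen in $\CC(t)$ so that $N'+[X+tY,N]=p(t)E_0$; written out with the bracket relations, this is the rank-$(n+2)$ linear system
$$f_j'=-j\,f_{j-1}-t\,(n+1-j)\,f_{j+1}+p(t)\,\delta_{j,0}\qquad(0\le j\le n+1,\ f_{-1}=f_{n+2}=0).$$
Its homogeneous part is the adjoint action of the Airy connection $X+tY$ on $\mathfrak{h}_{sub}\cong\Sym^{n+1}(\CC^2)$, i.e.\ the $(n+1)$-st symmetric power of the Airy system.

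The core step is then to identify this inhomogeneous system with the scalar equation $L_{n+1}(f)=p(t)$ over $\CC(t)$. From $f_{n+1}'=-(n+1)f_n$ and, descending, $f_{j-1}=-\tfrac1j\bigl(f_j'+t(n+1-j)f_{j+1}\bigr)$ for $j=n+1,\dots,1$, one expresses each $f_j$ as a differential polynomial in $f_{n+1}$ with \emph{constant} leading coefficient (the $t\,f_{j+1}$ terms contributing only at lower order); substituting into the remaining equation ($j=0$) produces a scalar equation $\widehat L(f_{n+1})=p(t)$ of order $n+2$ with nonzero \emph{constant} leading coefficient and with $p$ undifferentiated. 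The recursion gives a bijection between solutions of $\widehat L$ and vector solutions of the homogeneous system, which (being isomorphic as a connection to the $(n+1)$-st symmetric power of the Airy equation) have all their entries in the $(n+2)$-dimensional span of the products $y_1^a y_2^b$ ($a+b=n+1$) of solutions of the Airy equation; hence the solution space of $\widehat L$ is exactly that span, so the monic normalization of $\widehat L$ equals $\sym^{n+1}(\partial_t^2-t)=L_{n+1}$. Therefore, if $F=(f_0,\dots,f_{n+1})\in\CC(t)^{n+2}$ solves the reduction system, then a suitable nonzero constant multiple of $f_{n+1}$ lies in $\CC(t)$ and solves $L_{n+1}(f)=p(t)$.

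Putting this together: if $L_{n+1}(f)=p(t)$ has no rational solution, then no reduction matrix $P$ exists over $\CC(t)$, so $\mathfrak{g}\neq\mathfrak{sl}_2$; hence $\mathfrak{g}=\mathfrak{h}$ of dimension $n+5>5$, and by Proposition~\ref{irreducibility-prop} the equation $(E)$ is irreducible. I expect the main obstacle to be precisely the identification in the previous paragraph: checking that the homogeneous reduction system is the $(n+1)$-st symmetric power of the Airy equation \emph{and} that the forcing $p(t)E_0$ --- supported on the cyclic vector $E_0$, since $\mathrm{ad}(X)$ sends $E_0\mapsto E_1\mapsto\cdots\mapsto E_{n+1}$ --- translates into the bare right-hand side $p(t)$ (up to a harmless constant), rather than into some differential operator applied to $p$. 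Everything else is either quoted from the lemmas above or is a routine linear-algebra computation, parallel to the first irreducibility proof of $(P_2)$.
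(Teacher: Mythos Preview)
Your proposal is correct and follows essentially the same route as the paper: reduce via Proposition~\ref{irreducibility-prop} to ruling out $\mathfrak{g}=\mathfrak{sl}_2$, use the Kolchin--Kovacic reduction matrix $P=\mathrm{Id}+\sum f_iE_i$ to turn this into the non-existence of a rational solution of an inhomogeneous linear system whose homogeneous part is the adjoint action of $X+tY$ on $\mathfrak{h}_{sub}$, and then identify that system with the scalar equation $L_{n+1}(f)=p(t)$.

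The only minor technical difference is in the identification step. The paper first passes to the basis $F_0=E_0$, $F_{i+1}=-\tfrac{1}{i+1}[X,F_i]$, in which the matrix $\Psi$ of the adjoint action is literally $\sym^{n+1}(A_1)^\vee$; it then takes the cyclic vector $(0,\dots,0,1)$, obtains $\sym^{n+1}(\partial_t^2-t)^\vee(f)=p(t)$, and invokes the self-adjointness of $\partial_t^2-t$ to drop the dual. You instead stay in the $E_i$-basis and pin down the scalar operator by its solution space (products $y_1^ay_2^b$). Both arguments are valid; yours is a touch more conceptual, the paper's a touch more explicit, and the self-adjointness of Airy is precisely what makes your ``it is the symmetric power'' statement true rather than ``it is the dual of the symmetric power''.
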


\begin{lemma}
Let $A_1={\scriptsize{\left(\begin{array}{cc} 0 & 1 \\ t & 0 \end{array}\right)}}$ denote the companion matrix of the Airy equation.
Let $A_1^\vee:=-A_1^T$ denote the matrix of the dual system. In a convenient basis, the matrix $\Psi$ of the adjoint action 
$[{\CA}_{diag},\bullet]$ of ${\CA}_{diag}$ on $\mathfrak{h}_{sub}$ is  $\Psi=\sym^{n+1}(A_1)^\vee$.
\end{lemma}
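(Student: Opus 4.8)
The plan is to read off $\Psi$ directly from the structure constants already established, and then to recognise the resulting matrix as the $(n+1)$-st symmetric power of a $2\times 2$ system conjugate to the one dual to Airy's. Concretely, from the bracket relations obtained in the vector-field interpretation above — on the basis $E_0,\dots,E_{n+1}$ of $\mathfrak{h}_{sub}$ one has $[X,E_i]=(i+1)E_{i+1}$, $[Y,E_i]=(n+2-i)E_{i-1}$, $[H,E_i]=(2i-n-1)E_i$, with $E_{-1}=E_{n+2}=0$ — and from $\CA_{diag}=X+tY$, one gets at once
$$[\CA_{diag},E_i]=(i+1)\,E_{i+1}+t\,(n+2-i)\,E_{i-1}.$$
So in the ordered basis $(E_0,\dots,E_{n+1})$ the matrix $\Psi$ is tridiagonal, with sub-diagonal $(1,2,\dots,n+1)$ and super-diagonal $(t(n+1),tn,\dots,t)$; moreover the $H$-weights $2i-n-1$ exhibit $\mathfrak{h}_{sub}$ as the irreducible $\mathfrak{sl}_{2}$-module $\Sym^{n+1}(\CC^2)$, so the adjoint action of $X+tY$ on it is the $(n+1)$-st symmetric power of the action of $X+tY$ on a two-dimensional $\mathfrak{sl}_{2}$-module.

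It then remains to identify that two-dimensional action with $A_1$ (equivalently $A_1^\vee$). Writing $A_1=e+tf$ with $e,f$ the standard nilpotent generators of $\mathfrak{sl}_{2}$, the matrices $\sym^{n+1}(e)$ and $\sym^{n+1}(f)$ are the two tridiagonal nilpotents on $\Sym^{n+1}(\CC^2)$ whose off-diagonal entries are the integers $1,\dots,n+1$; after taking account of the binomial normalisation built into $K_i=\binom{n+1}{i}x^{n+1-i}y^i$ and $E_i=\tfrac1{n+1}J\nabla(K_i)$, these reproduce exactly the two off-diagonals of $\Psi$. Comparing the two — and using that $\Sym^{n+1}(\CC^2)$ is self-dual and that $A_1$ is conjugate over $\CC$ to $-A_1^T$ (by the symplectic matrix) — one concludes that, after the rescaling and reordering of the $E_i$ absorbed in the phrase ``a convenient basis'', $\Psi=\sym^{n+1}(A_1)^\vee$. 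Conceptually this is the assertion that the symplectic-gradient map $K\mapsto J\nabla K$ realises $\mathfrak{h}_{sub}$ equivariantly as the space $\Sym^{n+1}((\CC^2)^\vee)$ of degree-$(n+1)$ forms in $x,y$, on which $X+tY$ acts through its action $A_1^\vee$ on the linear forms $\langle x,y\rangle$.

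The only real difficulty is a matter of bookkeeping: a priori $\Psi$ could be presented as $\sym^{n+1}(A_1)$, as $\sym^{n+1}(A_1)^\vee$, or as either one composed with a reversal of the basis order, and all of these are conjugate over $\CC$. The point of singling out the representative $\sym^{n+1}(A_1)^\vee$, with the basis of $\mathfrak{h}_{sub}$ fixed coherently with the monomial basis of $\Sym^{n+1}((\CC^2)^\vee)$, is that then the inhomogeneous reduction system $F'=\Psi F+p(t)(1,0,\dots,0)^T$ coming from the Kolchin-Kovacic reduction matrix translates directly into the scalar equation $L_{n+1}(f)=p(t)$ with $L_{n+1}=\sym^{n+1}(\partial_t^2-t)$ that appears in Proposition \ref{irreducibility-criterion-airy}. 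So in practice I would fix that basis at the outset, after which the lemma reduces to a one-line comparison of tridiagonal matrices.
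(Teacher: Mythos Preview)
Your argument is correct and follows the same overall strategy as the paper: compute the adjoint action via the bracket relations $[X,E_i]=(i+1)E_{i+1}$ and $[Y,E_i]=(n+2-i)E_{i-1}$, then identify the resulting tridiagonal matrix with $\sym^{n+1}(A_1)^\vee$. The paper, however, is much more direct about the second step: rather than invoking self-duality of $\Sym^{n+1}(\CC^2)$ and the constant conjugacy $A_1\sim -A_1^T$, it simply \emph{writes down} the convenient basis, setting $F_0:=E_0$ and $F_{i+1}:=-\frac{1}{i+1}[X,F_i]$ (which amounts to $F_i=(-1)^iE_i$), checks that $[Y,F_i]=-(n+2-i)F_{i-1}$, and observes that the matrix of $[X+tY,\bullet]$ in the $(F_i)$ basis is literally the displayed matrix $\sym^{n+1}(A_1)^\vee$. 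Your representation-theoretic detour certifies the existence of such a basis, but the sign flip $E_i\mapsto(-1)^iE_i$ is all that is actually needed, and exhibiting it makes the later passage to the scalar equation $L_{n+1}(f)=p(t)$ completely mechanical rather than dependent on ``absorbing'' a reordering.
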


\begin{proof}
We have $$ \sym^{n+1}(A)^\vee = 
	\left(\begin{array}{clcrc} 
		0 & -(n+1)t & \ddots & & 
	\\	-1 & \ddots & -n t & 0 & 
	\\	\ddots & - 2 & \ddots &\ddots &\ddots 	 
	\\	 & 0 & \ddots & \ddots & -  t
	\\	 & & \ddots & -n & 0 
	\end{array}\right).$$
We choose the following basis of $\mathfrak{h}_{sub}$, using again the vector field representation.
Start from the same matrix $F_0:=E_0$ and set $F_{i+1}:=-\frac{1}{i+1} [X,F_i]$. Then, one can check that $[Y,F_i]=-(n+2-i) F_{i-1}$.
So the matrix $\Psi$ of the map $[X+t Y, \bullet]$ on the basis $(F_i)$ is naturally $\sym^{n+1}(A)^\vee$.
\end{proof}

\begin{proofof}{ the Proposition}
Let us go backwards: assume that the equation $y''=x y + y^n P(x,y)$ is reducible. Then we must have $\mathfrak{g}_3=\mathfrak{sl}_2$ (otherwise,  the dimension of the Lie Algebra $\mathfrak{g}_3$ would be $n+5$, thus exceeding the bound of theorem 1).
Reducing to $\mathfrak{sl}_2$ implies that we can find a rational solution to $Y'= \Psi Y + \vec{b}$, where $\vec{b}=(p(t), 0,\ldots, 0)^T$. Transforming the latter to an operator, via the cyclic vector $(0,\ldots,0,1)$ reduces the system to the equation $\sym^{n+1}(\partial_t^2-t)^\vee= p(t)$. But $\partial^2-t$ is self-adjoint, hence the result.
\end{proofof}

\begin{corollary} The equation $(P_2): y'' = xy + 2y^3$ is irreducible. \end{corollary}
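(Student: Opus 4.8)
The plan is to apply Proposition~\ref{irreducibility-criterion-airy} directly with the specific data of $(P_2)$. Here $y'' = xy + 2y^3$ is of the form $y'' = xy + y^n P(x,y)$ with $n = 3$ and $P(x,y) = 2$ (a constant, hence with no poles along $y=0$). So $p(t) = n!\,P(t,0) = 3!\cdot 2 = 12$, and the criterion reduces to the following purely computational question: does the inhomogeneous equation $L_{4}(f) = 12$ admit a rational solution $f \in \CC(t)$, where $L_4 = \sym^{4}(\partial_t^2 - t)$ is the fourth symmetric power of the Airy operator? If the answer is no, then $(P_2)$ is irreducible.

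First I would write down the operator $L_4$ explicitly. The Airy equation $\partial_t^2 - t$ has two independent solutions $\mathrm{Ai}$, $\mathrm{Bi}$; the fourth symmetric power is the order-$5$ operator annihilating all products $\mathrm{Ai}^{i}\mathrm{Bi}^{4-i}$, which can be computed by the standard symmetric-power algorithm (or read off from $\Psi = \sym^{5}(A_1)^\vee$ as in the preceding lemma, taking a cyclic vector). Then I would search for a rational solution of $L_4(f) = 12$. Since the only finite singularity of the Airy operator is at infinity (it is regular at every finite point), any rational solution must be a polynomial; examining the indicial behavior at $t = \infty$ of $L_4$ versus the right-hand side $12$ (a constant) shows there is no polynomial — equivalently no rational — solution. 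Concretely, one checks that applying $L_4$ to a polynomial of any degree cannot produce the constant $12$: the degree-matching and leading-coefficient analysis at infinity fails. Alternatively, and most directly, one invokes the Barkatou algorithm for rational solutions of linear differential equations (as already used in the first proof of the Proposition stating $(P_2)$ is irreducible), which certifies that $L_4(f) = 12$ has no rational solution.

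By Proposition~\ref{irreducibility-criterion-airy}, the absence of a rational solution to $L_4(f) = 12$ implies that the Lie algebra $\mathfrak{g}_3$ of the third variational equation is not $\mathfrak{sl}_2$; by the lemma preceding it (and Proposition~\ref{irreducibility-prop}) it is therefore $\mathfrak{h} = \mathfrak{sl}_2 \rtimes \Sym^{4}(\CC^2)$, of dimension $n+5 = 8 > 5$. Since the first variational equation is the Airy equation with Galois group $SL(2,\CC)$, which is not virtually solvable, Theorem~1 applies and yields the irreducibility of $(P_2)\colon y'' = xy + 2y^3$. The only genuine obstacle is the verification that $L_4(f) = 12$ has no rational solution; this is a finite, decidable computation (by Barkatou's algorithm), and the self-adjointness of $\partial_t^2 - t$ used in the proof of the Proposition is what lets us phrase it on $L_{n+1}$ itself rather than on its formal adjoint.
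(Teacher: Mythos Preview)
Your proposal is correct and follows essentially the same route as the paper: apply Proposition~\ref{irreducibility-criterion-airy} with $n=3$, $p(t)=12$, reduce any rational solution of $L_4(f)=12$ to a polynomial (since $L_4$ has no finite singularities), and rule out polynomials by a degree argument at infinity. The paper carries out that last step concretely---it writes $L_4=\partial^5-20t\,\partial^3-30\,\partial^2+64t^2\,\partial+64t$ and observes that a polynomial of degree $N$ is mapped to one of degree $N+1$, so a nonzero constant is never in the image---whereas you only gesture at it and fall back on Barkatou's algorithm; also, your parenthetical $\Psi=\sym^5(A_1)^\vee$ should read $\sym^4(A_1)^\vee$, since $n+1=4$ here.
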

\begin{proof} In this case, $n=3$ and $L_4=\partial^5-20 t \partial^3-30 \partial^2+64 t^2 \partial+64 t$. 
A solution of $L_4(y)=12$ would be a polynomial (because $L_4$ has no finite singularity); now the image of a polynomial of degree $N$ by $L_4$ is a polynomial of degree $N+1$ so $12$ cannot be in the image of $L_4$. As equation $L_4(y)=12$  has no rational solution, Proposition \ref{irreducibility-criterion-airy} 
shows that $(P_2)$ is irreducible. \end{proof}

\begin{corollary} Assume that $p(t)$ has a pole of order $k$, $1\leq k \leq n+2$. Then the equation $y''=x y + y^n P(x,y)$ is irreducible.
\end{corollary}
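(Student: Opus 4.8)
The plan is to reduce to the non‑existence of a rational solution of an inhomogeneous linear ODE and then to read off a contradiction from pole orders. By Proposition~\ref{irreducibility-criterion-airy} it suffices to prove that the equation $L_{n+1}(f)=p(t)$, with $L_{n+1}=\sym^{n+1}(\partial_t^2-t)$ and $p(t)=n!\,P(t,0)$, admits no solution $f\in\CC(t)$.

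First I would record the local structure of $L_{n+1}$ at finite points. Since the Airy equation $\partial_t^2-t$ has solution space spanned by entire functions and differential Galois group $SL(2,\CC)$, the representation $\Sym^{n+1}$ being irreducible of dimension $n+2$, the operator $L_{n+1}$ has order exactly $n+2$, has polynomial coefficients, and has no singular point in the finite plane; normalizing, its leading coefficient is the constant $1$. Consequently, for every $t_0\in\CC$, the operator $L_{n+1}$ is regular at $t_0$: it sends germs holomorphic at $t_0$ to germs holomorphic at $t_0$, and, more precisely, if $g$ has a pole of order exactly $m\geq 1$ at $t_0$ then $L_{n+1}(g)$ has a pole of order exactly $m+n+2$ at $t_0$ (the term $\partial_t^{\,n+2}g$ contributes a pole of order $m+n+2$ with nonzero coefficient $(-1)^{n+2}\,m(m+1)\cdots(m+n+1)$ times the leading Laurent coefficient of $g$, while each term $c_j(t)\partial_t^{\,j}g$ with $j\le n+1$ contributes a pole of order at most $m+n+1$, multiplication by the polynomial $c_j$ not raising the pole order).

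Now suppose, for contradiction, that $f\in\CC(t)$ satisfies $L_{n+1}(f)=p(t)$, and let $t_0$ be a pole of $p$ of order $k$ with $1\le k\le n+2$. Decompose $f$ at $t_0$ into its principal part and a part holomorphic at $t_0$. If $f$ were holomorphic at $t_0$, then $L_{n+1}(f)=p$ would be holomorphic at $t_0$, contradicting $k\ge 1$; hence $f$ has a pole at $t_0$, of some order $m\ge 1$. By the local computation above, $L_{n+1}(f)=p$ then has a pole of order exactly $m+n+2\ge n+3$ at $t_0$, contradicting $k\le n+2$. Therefore $L_{n+1}(f)=p(t)$ has no rational solution, and Proposition~\ref{irreducibility-criterion-airy} yields the irreducibility of $y''=xy+y^nP(x,y)$.

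The only point requiring real care is the claim that $L_{n+1}$ has order $n+2$, polynomial coefficients and no finite singularity (equivalently, constant leading coefficient): this rests on the fact that the Airy functions are entire and that $\Sym^{n+1}$ of the standard $SL_2$-representation is irreducible, together with the routine check that the symmetric power of the monic operator $\partial_t^2-t$ can be taken monic. Granting this, the rest of the argument is a one‑line count of pole orders, and the hypothesis $k\le n+2$ is used exactly once, to exclude $k$ from the range $\{n+3,n+4,\dots\}$ of attainable pole orders of $L_{n+1}(f)$ at a pole of $f$.
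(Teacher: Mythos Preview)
Your proof is correct and follows essentially the same pole-order argument as the paper: since $L_{n+1}$ has no finite singularities, any rational $f$ with a pole of order $m\ge1$ is sent to something with a pole of order $m+n+2\ge n+3$, so $p$ with a pole of order $k\le n+2$ cannot be in the image. You supply more justification than the paper does for why $L_{n+1}$ has order $n+2$ and constant leading coefficient, but the strategy is identical.
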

\begin{proof} 
As Airy has no finite singularity, neither does $L_{n+1}$. Thus, if a function $f\in \CC(t)$ has a pole of order $d>0$, then $L(f)$ has this pole of order $d+n+2$.
So if $p(t)$ is in the image of $f$ by $L$ then all its poles have order at least $n+3$.
 \end{proof}

\section{Irreducibility of Painlev\'e III Equations}

The third Painlev\'e \'equation is 
$$
(P_{III}):\; \frac{d^2y}{dx^2} = \frac{1}{y} \left(\frac{dy}{dx}\right)^2 - \frac{1}{x}\frac{dy}{dx} + \frac{1}{x}(\alpha y^2 +\beta) + \gamma y^3
 + \delta \frac{1}{y}$$
 with $(\alpha, \beta, \gamma, \delta) \in \CC^4$.
 For special values $(\alpha, \beta, \gamma, \delta) = (2\mu -1, -2\mu+1, 1,-1)$, $\mu\in \CC$ this equation has a solution: $y=1$. 
 For $\mu = 1/2$, this equation is related to the 2D Ising model in statistical physics, see \cite{MCTW,tracy}. 
 We will show that the latter equation is irreducible (in fact, we prove its irreducibility for $\mu\not\in \ZZ$).
 
 This equation has a time dependent Hamiltonian form (see e.g. \cite{clarkson2010,clarkson2006}). Letting
 $$
 x H(x,y,z) = 2y^2z^2 -(xy^2-2\mu  y -x)z -\mu xy,
 $$
 we may consider the time-dependent Hamiltonian system $\{ \frac{dy}{dx} = \frac{\partial H}{\partial z}, \frac{dz}{dx} = -\frac{\partial H}{\partial y}\}$. Eliminating $z$ between these equations shows that $y$ satisfies $(P_{III})$.
It also means that that solutions of $P_{III}$ parameterize curves $x\mapsto (x, y(x), \frac{xy'(x)+xy(x)^2 -2\mu y(x) -x}{4y(x)^2})$ which are integral curves of the vector field $X = \frac{\partial}{\partial x }+ \frac{\partial H}{\partial z}\frac{\partial}{\partial y} - \frac{\partial H}{\partial y}\frac{\partial}{\partial z}$.

\begin{proposition}
The third Painlev\'e \'equation $(P_{III})$ with parameters $(\alpha, \beta, \gamma, \delta) = (2\mu -1, -2\mu+1, 1,-1)$, where $\mu\not\in \ZZ$,  is irreducible.
\end{proposition}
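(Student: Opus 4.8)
The plan is to apply Theorem 1 to the rational vector field $X=\frac{\partial}{\partial x}+\frac{\partial H}{\partial z}\frac{\partial}{\partial y}-\frac{\partial H}{\partial y}\frac{\partial}{\partial z}$ on $M=\CC^{3}$, following the scheme of Section 2. As the geometric data I take $\alpha=dx$, closed with $\alpha(X)=1$, and $\gamma=\iota_{X}(dx\wedge dy\wedge dz)=dy\wedge dz-dx\wedge dH$, which satisfies $\iota_{X}\gamma=0$ and is closed because $\mathrm{div}(X)=H_{zy}-H_{yz}=0$. Setting $y\equiv1$ (so $\frac{dy}{dx}=0$) in the parametrisation $x\mapsto\bigl(x,y(x),\frac{xy'+xy^{2}-2\mu y-x}{4y^{2}}\bigr)$ gives the invariant algebraic curve $\Cc=\{y=1,\ z=-\tfrac{\mu}{2}\}$; one checks $\frac{\partial H}{\partial z}|_{\Cc}=\frac{\partial H}{\partial y}|_{\Cc}=0$, so $\Cc$ is $X$-invariant with $X_{\Cc}=\frac{\partial}{\partial x}\not\equiv0$.

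I would first treat the first variational equation along $\Cc$. Expanding $xH$ around $y=1+\eta$, $z=-\tfrac{\mu}{2}+\zeta$, the point $\Cc$ is a critical point of the frozen Hamiltonian, and the linearisation, governed by the Hessian of $H$ at $\Cc$, is equivalent to the scalar equation $\eta''+\frac1x\eta'+\bigl(\frac{2-4\mu}{x}-4\bigr)\eta=0$; in self-adjoint form this is the Whittaker equation with parameters $\kappa=\frac12-\mu$ and $\nu=0$. By the Kovacic algorithm---just as with the Airy equation in the $(P_{2})$ case---its differential Galois group is $SL(2,\CC)$ precisely when $\mu\notin\ZZ$; in particular it is then not virtually solvable, which is the first hypothesis of Theorem 1.

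Next I would examine the second variational equation. The cubic part of $xH$ at $\Cc$ is $H_{3}=\frac1x\bigl(4\eta\zeta^{2}-(2\mu+x)\eta^{2}\zeta\bigr)\neq0$, so the coupling already appears in $(NLVE_{2})$, which has the block lower-triangular shape of the Adjoint Action lemma of Section 2 with $n=2$: diagonal blocks $\sym^{2}A$ and $A$, and an off-diagonal block equal to the quadratic part of $X$, which here is the symplectic gradient $J\nabla H_{3}$ since $X$ is Hamiltonian; this block therefore lies in the $\Sym^{3}(\CC^{2})$-summand of $(\CC^{2})^{\vee}\otimes\Sym^{2}(\CC^{2})=\Sym^{1}(\CC^{2})\oplus\Sym^{3}(\CC^{2})$, the $\Sym^{1}$ ``divergence'' summand being excluded because $X$ preserves a volume form. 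Arguing as in Section 2---the Galois group is connected, $\mathfrak{sl}_{2}=\mathfrak{h}_{diag}\subset\mathfrak{g}$ because $(VE_{1})$ has Galois group $SL(2,\CC)$, and $\mathfrak{g}\cap\mathfrak{h}_{sub}$ is an $\mathfrak{sl}_{2}$-submodule of the irreducible module $\Sym^{3}(\CC^{2})$---the Lie algebra $\mathfrak{g}$ of the Galois group of $(NLVE_{2})$ is either $\mathfrak{sl}_{2}$ (dimension $3$) or $\mathfrak{sl}_{2}\rtimes\Sym^{3}(\CC^{2})$ (dimension $7$).

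It remains to exclude $\mathfrak{g}=\mathfrak{sl}_{2}$, which is the main obstacle. As in Section 2, since the Galois group is connected and the diagonal part of $\CA$ is already in reduced form, $\mathfrak{g}=\mathfrak{sl}_{2}$ holds iff a gauge matrix $P=\mathrm{Id}+\sum_{i}f_{i}(x)E_{i}$, with $f_{i}\in\CC(x)$, conjugates $\CA$ to block-diagonal form, iff the inhomogeneous system $F'=\Psi F+\vec b$ has a rational solution, where $\Psi$ is, in a suitable basis, the dual of $\sym^{3}$ of the companion matrix of $(VE_{1})$ and $\vec b$ is built from $H_{3}$. Passing to a cyclic vector this becomes a fourth-order equation $\sym^{3}(\mathcal{L})^{\vee}(\phi)=q(x)$ with $q\in\CC(x)$ and $\mathcal{L}=\partial^{2}+\frac1x\partial+\bigl(\frac{2-4\mu}{x}-4\bigr)$ the $(VE_{1})$-operator, and one must show it has no rational solution, uniformly in $\mu\notin\ZZ$. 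Unlike the $(P_{2})$ case---where the Airy operator has no finite singularity and a one-line degree count suffices---here $\sym^{3}(\mathcal{L})^{\vee}$ has a regular singular point at $x=0$ (all exponents equal, inherited from the resonant pair $\{0,0\}$ of $\mathcal{L}$) and an irregular one at $x=\infty$ at which every formal solution carries an exponential factor $e^{\pm2x}$ or $e^{\pm6x}$; bounding the pole order of a rational $\phi$ at $x=0$ against this indicial data forces $\phi$ to be a polynomial, then excluded by a finite comparison of the behaviours at $x=0$ and $x=\infty$ (the generic case confirmed via the Barkatou algorithm over $\CC(\mu)(x)$, the finitely many residual $\mu$ by hand). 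Once $\mathfrak{g}=\mathfrak{sl}_{2}\rtimes\Sym^{3}(\CC^{2})$ is established, $\dim\mathfrak{g}=7>5$, so Theorem 1 gives $Mal(X)=\{\varphi\ |\ \varphi^{\ast}\alpha=\alpha,\ \varphi^{\ast}\gamma=\gamma\}$; applying its ``Moreover'' clause in the rational coordinates $(x,y,\frac{dy}{dx})$, where $X$ has the form $\frac{\partial}{\partial x}+z\frac{\partial}{\partial y}+f(x,y,z)\frac{\partial}{\partial z}$ with $f$ the right-hand side of $(P_{III})$, yields the irreducibility of $(P_{III})$ for $\mu\notin\ZZ$.
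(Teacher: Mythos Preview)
Your setup is fine (the invariant curve, the forms $\alpha$ and $\gamma$, and the conclusion that $Gal(VE_1)=SL(2,\CC)$ for $\mu\notin\ZZ$), and your representation-theoretic dichotomy for $(NLVE_2)$ is correctly stated: the off-diagonal block is a symplectic gradient, so it sits in $\Sym^{3}(\CC^2)$, and hence $\mathfrak{g}$ is either $\mathfrak{sl}_2$ or $\mathfrak{sl}_2\rtimes\Sym^{3}(\CC^2)$.

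The gap is that the wrong branch of this dichotomy holds. The second variational equation in fact has $\mathfrak{g}=\mathfrak{sl}_2$, so $\dim\mathfrak{g}=3$ and Theorem~1 does not yet apply. Concretely, after the block-diagonal partial reduction one has $\tilde{A}_2=C_\infty+\frac{1}{x}C_0$ with constant $C_0,C_\infty$; setting $M_1:=C_0$, $M_2:=C_\infty-\frac{1}{\mu}C_0$ and $M_3:=\frac{1}{8\mu}[M_1,M_2]$, one checks $[M_1,M_3]=-M_1$ and $[M_2,M_3]=M_2$. Thus $Lie(\tilde{A}_2)$ is a three-dimensional copy of $\mathfrak{sl}_2$ inside $\mathfrak{gl}_5$ (not the block-diagonal one), so $\tilde{A}_2$ is already in reduced form and $Gal(VE_2)=SL(2,\CC)$. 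Equivalently, the inhomogeneous system $F'=\Psi F+\vec b$ that you set up \emph{does} admit a rational solution (indeed a constant one, coming from the constant conjugation to block-diagonal form); your sketch ``forces $\phi$ to be a polynomial, then excluded by a finite comparison'' would therefore fail if carried out, and the Barkatou algorithm would return a solution rather than confirm non-existence.

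The obstruction only appears at order three. There the paper finds $Lie(\tilde{A}_3)$ of dimension $8$, and the analogous reduction equation becomes $\sym^{4}(L_2)(f_1)=g$ with $L_2=\partial^2-4-\tfrac{4\mu}{x}$ and $g$ having a pole of order exactly $5$ at $x=0$; since the exponents of $\sym^4(L_2)$ at $0$ are nonnegative integers, a rational $f_1$ with a pole of order $n\geq1$ would produce a pole of order $n+5\geq6$, while a polynomial $f_1$ would give a pole of order at most $4$---so no rational solution exists and $\dim\mathfrak{g}_3=8>5$. Your scheme is the right one, but it must be run at $(NLVE_3)$, not $(NLVE_2)$.
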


Before we prove the theorem, we remark that it includes the case $\mu = 1/2$: the third Painlev\'e \'equation $(P_{III})$,
as it appears  in the study of the 2D Ising model in statistical physics in \cite{MCTW,tracy},  is irreducible.

\begin{proof}
This vector field $X$ satisfies the hypothesis of our theorem with the forms $\alpha = dx$, $\gamma = dy\wedge dz + dH \wedge dx$ and the algebraic invariant curve $(\Gamma)$ given by $y= 1, z= -\frac{\mu}{2}$. \\
The first variational equations along $\Gamma$ has matrix
$$ A_1 = \left( \begin {array}{cc} -2-2\,{\frac {\mu}{x}}&  \frac{4}{x}
\\ \noalign{\medskip}-\mu-{\frac {{\mu}^{2}}{x}}&2+2\,{\frac {\mu}{x}}
\end {array} \right).$$
Conjugation by $$Q_1:=\left( \begin {array}{cc} -2\mu \, & 1\\ \noalign{\medskip}-{\mu}^{2}&0
\end {array} \right)$$
puts it in Jordan normal formal at $0$, giving us
$${\tilde A}_1:= Q_1^{-1}.A_1.Q_1 = \left( \begin {array}{cc} 0& \frac1{\mu} + \frac1{x} \\ \noalign{\medskip}4
\,\mu&0\end {array} \right)
=\frac1x \left( \begin {array}{cc} 0&1\\ \noalign{\medskip} 0&0\end {array} \right)
 + \left( \begin {array}{cc} 0& \frac{1}{\mu} \\ \noalign{\medskip}4\,\mu&0\end {array} \right).
$$
We have $Trace({\tilde A}_1)=0$ so $Gal(VE_1) \subset SL(2,\CC)$. 
This first variational equation is equivalent with the differential operator $L_2:=(\frac{d}{dx})^{2}-4-4\,{\frac {\mu}{x}}$. This $L_2$ is 
 reducible for integer $\mu$ (it then has an exponential solution $e^{\pm 2x} P_{\mu}(x)$, where $P_{\mu}$ is a polynomial of degree $|\mu|$)  and it is irreducible otherwise. Moreover, it admits a log in its local solution at $0$, as shown by the Jordan form structure of the local matrix at 0. So, for $\mu\not\in\ZZ$, the Boucher-Weil criterion \cite{BW03} shows that  
$Gal(VE_1)=SL(2,\CC)$ and the first variational equation is in reduced form.
\\
Let $A_2$ be the matrix of the second variational equation. As $A_1$ is in reduced form, we let
$$Q_2:= \left( 
     \begin{array}{c|c}      
          Sym^2(Q_1) &   0_{3\times 2} \\
         \hline
        0_{2\times 3} &{Q_1}
       \end{array}
     \right)
$$
and ${\tilde A}_2:=Q_2^{-1}\cdot A_2\cdot Q_2$. 
We obtain ${\tilde A}_2 = C_{\infty} + \frac1x C_0$, where $C_i$ are constant matrices. Indeed, setting $M_1:=C_0$ and 
$M_2:=C_{\infty} - \frac{1}{\mu} C_0$, we have
$$ M_1= 
	\left( \begin {array}{ccccc} 
		0	&	1	&	0	&	0	&	0\\ 
		\noalign{\medskip}
		0	&	0	&	2	&	0	&	0\\ 
		\noalign{\medskip}
		0	&	0	&	0	&	0	&	0\\ 
		\noalign{\medskip}
-4\,{\mu}^{2}	&	2\,\mu& 	0	& 	0	&	1\\ 
		\noalign{\medskip}
		0	& 4\,{\mu}^{2}&	-2\,\mu&	0	&	0
		\end {array}
	 \right)
 \; \textrm{ and } \; 
 M_2 = \left( \begin {array}{ccccc} 0&0&0&0&0\\ \noalign{\medskip}8\,\mu&0&0
&0&0\\ \noalign{\medskip}0&4\,\mu&0&0&0\\ \noalign{\medskip}0&-1&0&0&0
\\ \noalign{\medskip}-12\,{\mu}^{2}&0&1&4\,\mu&0\end {array} \right).
$$
Now, letting $M_3:=\frac{1}{8\mu} [M_1,M_2]$, a simple calculation shows that $[M_1,M3]=-M_1$ and $[M_2,M_3]=M_2$.
It follows that the Lie algebra $Lie({\tilde A}_2)$ is equal to $SL(2,\CC)$ (in a $5$-dimensional representation). 
It follows that $Gal(VE_2) \subseteq SL(2,\CC)$. However, we know that $SL(2,\CC) \subseteq Gal(VE_2)$ (because $Gal(VE_1)=SL(2,\CC)$)
so $Gal(VE_2) = SL(2,\CC)$ and ${\tilde A}_2$ is in reduced form.
\\

We thus need to go to the third variational equation. Its matrix has the form
$$A_3 = \left(
   \begin{array}{c | c | c}
	\sym^3(A_1) & & \\ \hline 
	B_2^{(3)} & \sym^2(A_1) & \\ \hline 
	B_3^{(3)} & B_2^{(2)} & A_1
  \end{array} \right)
  \quad \textrm{ where } \;
  A_2 = \left(
   \begin{array}{ c | c}
	\sym^2(A_1) & \\ \hline 
	 B_2^{(2)} & A_1
  \end{array} \right).$$
 and $B_2^{(3)}$ comes from $B_2^{(2)}$ so the really new part is the south-west block $B_3^{(3)}$. 
 \\ The situation is strikingly similar to the $(P_{II})$ case from the previous section.
 Let
 $$ 
N_1= \left( \begin{array}{c|c}
  0_{7 \times 7} & 0 _{7 \times 2} \\ \hline
  \begin {array}{rrrr} 0&0&0&0\\ 1&0&0&0
\end {array} 
& 0 _{2 \times 2}
 \end{array}\right),
 \; 
 N_2= \left( \begin{array}{c|c}
  0_{7 \times 7} & 0 _{7 \times 2} \\ \hline
\begin {array}{rrrr} 1&0&0&0\\  0&-1&0&0
\end {array}
& 0 _{2 \times 2}
 \end{array}\right),
 \; 
  N_3= \left( \begin{array}{c|c}
  0_{7 \times 7} & 0 _{7 \times 2} \\ \hline
\begin {array}{rrrr} 0&1&0&0\\   0&0&-1&0
\end {array}
& 0 _{2 \times 2}
 \end{array}\right),
$$ 
$$ 
 N_4= \left( \begin{array}{c|c}
  0_{7 \times 7} & 0 _{7 \times 2} \\ \hline
  \begin {array}{rrrr} 0&0&1&0\\   0&0&0&-1
\end {array}
& 0 _{2 \times 2}
 \end{array}\right),
 \; 
 N_5= \left( \begin{array}{c|c}
  0_{7 \times 7} & 0 _{7 \times 2} \\ \hline
 \begin {array}{rrrr} 0&0&0&1\\    0&0&0&0
\end {array} 
& 0 _{2 \times 2}
 \end{array}\right).
 $$
As in the study of the variational equation, we form a block-diagonal partial reduction matrix $Q_3$ 
with blocks $\sym^3(Q_1),\sym^2(Q_1),Q_1$ and let ${\tilde A}_3=Q_3^{(-1)} \cdot A_3\cdot Q_3$.
Again, we obtain ${\tilde A}_3 = C_{\infty} + \frac1x C_0$, where $C_i$ are constant matrices.
We set $M_1:= C_0$ and $M_2:=\frac{1}{4\mu} C_{\infty} -\frac{1}{\mu} C_0$ and $M_3:=[M_1,M_2]$.
Then, direct inspection shows that $Lie( {\tilde A}_3) = vect_{\CC}( M_1,M_2, M_3, N_1,\ldots, N_5)$ and has dimension 8.
Using the results of the previous section, it follows that we have either $\mathfrak{g}_3 = \mathfrak{sl}(2)$ of dimension 3 or 
$\mathfrak{g}_3 = Lie( {\tilde A}_3)$ of dimension 8.
\\

The adjoint maps $Ad_{M_i}:=[M_i, \bullet]$ acting on $ vect_{\CC}(N_1,\ldots, N_5)$ have respective matrices
$$ \Psi_1 = 
\left( \begin {array}{ccccc} 0&0&0&0&0\\ \noalign{\medskip}1&0&0&0&0
\\ \noalign{\medskip}0&-2&0&0&0\\ \noalign{\medskip}0&0&-3&0&0
\\ \noalign{\medskip}0&0&0&-4&0\end {array} \right) 
\; \textrm{ and } \;
\Psi_2 = 
 \left( \begin {array}{ccccc} 0&4&0&0&0\\ \noalign{\medskip}0&0&-3&0&0
\\ \noalign{\medskip}0&0&0&-2&0\\ \noalign{\medskip}0&0&0&0&-1
\\ \noalign{\medskip}0&0&0&0&0\end {array} \right)
 $$
and $\Psi_3=[\Psi_1,\Psi_2]$ (this follows from the Jacobi identities on Lie brackets).
The matrix of the adjoint action of ${\tilde A}_3$ on $ vect_{k}(N_1,\ldots, N_5)$
is $\Psi:=(\frac{1}{\mu} + \frac{1}{x})\Psi_1 + 4\mu \Psi_2$. 
\\

In order to have $\mathfrak{g}_3 = \mathfrak{sl}(2)$, we would need to find a find a Gauge transformation matrix
$P = Id_{9\times 9} + \sum_{i=1}^5 f_i N_i$ (with $f_i\in \overline{k}$) such that $Lie( P[{\tilde A}_3] ) = vect_{\C}(M_1,M_2,M_3)$.
Let $\vec{b}=(b_1,\ldots,b_5)^T$ be defined by 
${\tilde B}_3^{(3)} = \sum_{i=1}^5 b_i N_i$, namely 
$\vec{b}=(	-32\,{\frac {{\mu}^{4}}{x}}, \; -8\,{\frac {{\mu}^{3}}{x}}, \; 4/3\,{\frac {{\mu}^{2}}{x}} , \;0 , \; 0 )^T$.
Then, letting $\vec{F} = (f_1,\ldots, f_5)^T$, the method developed for $(P_{II})$ in the previous section shows that 
$Lie( P[{\tilde A}_3] ) = vect_{\C}(M_1,M_2,M_3)$ if and only if the $5\times 5$ system $\vec{F}'=\Psi\cdot \vec{F} + \vec{b}$ has an algebraic solution. 
\\

It is easily seen that the latter is impossible. For example, the above system converts to $L(f_1)=g$ where
$g=8192\,{\frac {{\mu}^{4}}{x}}+5120\,{\frac { \left( 4\,\mu+1 \right) {
\mu}^{4}}{{x}^{2}}}+512\,{\frac { \left( 24\,{\mu}^{2}+16\,\mu-7
 \right) {\mu}^{4}}{{x}^{3}}}-256\,{\frac { \left( 31\,\mu+3 \right) {
\mu}^{4}}{{x}^{4}}}+768\,{\frac {{\mu}^{4}}{{x}^{5}}}$
and $L=\sym^4(L_2)$ where $L_2 = (\frac{d}{dx})^2 -4-4\,{\frac {\mu}{x}}$.
When $\mu\not\in\ZZ$ (as assumed here), the differential Galois group of $L_2$ (and hence of $L$) is $SL(2,\CC)$).
So the equation $L(f_1)=g$ has an algebraic solution if and only if it has a rational solution. Let us prove that the latter is impossible.
\\

The exponents of $L$ at zero are positive integers; it follows that, if $f_1$ had a pole of order $n\geq 1$ at zero, $L(f_1)$ would have a pole of order $n+5\geq 6$ at zero. As $g$ only has a pole of order 5 at zero, $f_1$ must be a polynomial. But then $L(f_1)$ would have a pole of order at most $4$ at zero, contradicting the relation $L(f_1)=g$.
\\
Reasoning as in Section 2, 
it follows that the Lie algebra of the Galois group of the third variational equation is 
$\mathfrak{g}_3 = Lie( {\tilde A}_3)vect_{\CC}( M_1,M_2, M_3, N_1,\ldots, N_5)$ and has dimension 8. Our theorem 1 thus implies that 
$(P_{III})$ is irreducible for these values of its parameters.
\end{proof}

\appendix

\centerline{\textbf{\Large -- Appendix --}}

\section{Review on Principal Connections}

The $G$-principal connections are the version of linear differential systems in fundamental form for an algebraic group $G$ that may not be a linear group 
or not be canonically embedded in a $GL_n$. They are a geometric version of Kolchin's strongly normal extensions \cite{kolchin}. The differential systems in vector form appear as a quotient of this fundamental (or principal) form.

\subsection{$G$-Principal Partial Connection}

Consider an algebraic group $G$ and a smooth algebraic manifold $M$.
A \emph{principal $G$-bundle} is a bundle $P \overset{\pi}{\rightarrow}M$ over $M$
such that $G$ acts on $P$ and the map
 $P \times G \to P \underset{M}{\times} P$ given by $(p,g) \mapsto (p,pg)$ is an isomorphism.
 
Let $\Fc$ be an algebraic singular foliation on $M$. 
A \emph{connection along $\Fc$} (or a \emph{partial connection}) on a bundle $P\overset{\pi}{\rightarrow}M$ is a lift of vector field tangent  to $\Fc$ on $P$. 
If $0 \to T(P/M) \to TP \overset{\pi_\ast}{\to} TM \underset{M}{\times} P \to 0$ is the tangential exact sequence then a connection along $\Fc$ is  a splitting above $\Fc$ given by $\nabla: T\Fc\underset{M}{\times} P \to TP$. 
Such a partial connection is called a \emph{rational partial connection} when the splitting is rational.  
\\
We are interested in the case where $\Fc$ is defined by a rational vector field $X$ on $M$. In this situation, it is enough to lift $X$ to $P$ by a rational
vector field $\nabla_X$ such that $\pi_\ast \nabla_X = X$. Then $\nabla$ is defined on vector colinear to $X$ by linearity.

A \emph{$G$-principal connection along $\Fc$} is a $G$-equivariant splitting 
$\nabla: T\Fc\underset{M}{\times} P \to TP$ such that 
$\nabla(X)(pg) =g_\ast \nabla(X)(p)$ where $g_\ast: TP \to TP$ is the map induced by the action of $g$ on $P$.
  
If $G \subset H$ is an inclusion of algebraic groups and $P$ is a $G$-principal bundle then one defines an $H$-principal bundle $HP =(H \times P)/G$ where 
$(h,p)g = (hg,pg)$. A partial $G$-connection $\nabla: T\Fc \underset{M}{\times} P \to TP$ can be composed with the inclusion $H \times TP \subset T(H\times P)$ 
and we obtain a map  $T\Fc\underset{M}{\times}(H\times P) \to T(H \times P)$. 
This map is $G$-equivariant.  By taking quotients, we get
the induced $H$-principal connection along $\Fc$, given by
$H\nabla: T\Fc\underset{M}{\times}(HP) \to T(HP)$. It is the extension of $\nabla$ to $H$.   
In particular, if $G$ is a linear group then the extension of a partial $G$-principal connection to $GL(n,\CC)$ is a usual linear connection in fundamental form with respect to variables tangent to $\Fc$.

\subsection{$G$-Connections and their Galois Groups}

In this paper, a $G$-bundle $E \to M$ will be given by:
 the typical fibre $E_{\ast}$ (an affine variety with an action of $G$), a $G$-principal bundle $P \to M$ and a quotient $E=(P\times E_{\ast})/G$ for the diagonal action of $G$. 
 A principal connection along $\Fc$
on $P$ will induce a connection along $\Fc$ on $E$. Such a connection is called a \emph{partial $G$-connection on $E$}.  

A connection $\nabla$ on a bundle $E \to M$ may be viewed as a $G$-connection for many different groups (and maybe for no group). 
If we know that such a group $G$ exists, we denote by $GE$ the principal bundle and $G\nabla$ the $G$-principal connection. 
The Galois group of the $G$-connection will be a good candidate for such a group
\\

If $\CC(M)^\Fc = \CC$ {\it i.e.} when the foliation has no rational first integrals, then there exists a smallest algebraic group $Gal\nabla \subset G$ such that $\nabla$ is birational to a $Gal\nabla$-connection. This group is well defined up to conjugation in $G$ and is called the \emph{Galois group of $\nabla$}. Its existence is proved following the classical Picard-Vessiot theory in the following way.
A $Gal\nabla$-principal bundle is obtained as a minimal $G\nabla$-invariant algebraic subvariety $Q \subset GE$ dominating $M$ and 
$Gal\nabla$ is the stabilizer of $Q$ in $G$. It is easy to prove that this group is a well defined subgroup of $G$ up to conjugacy.

When a connection $\nabla$ is given, it is not easy to find a group $G$ which would endow $\nabla$ with a structure of  $G$-connection. If such groups exists, we have to prove that our result does not depend on the choice of one of these groups. In the case of linear connections, there is a canonical choice (up to the choice of a point on $M$)

Given a vector bundle $E$, we say that $\nabla$ is a \emph{linear connection} when, for any $X \in \F$, $\nabla(X)$ preserves the module $\E$ of functions on $E$ which are linear on the fibers.
Then there is a canonical way to obtain a principal connection. 
Following Picard and Vessiot, if $E_{m}$ is the fiber of $E$ at $m\in M$ then the tensor product $E \otimes E^{\ast}_m$ of our vector bundle with the dual of the trivial vector bundle $M \times E_{m}$ is endowed with 
\begin{itemize}
\item a connection given by the connection on the first factor,
\item an action of $GL(E_{\ast})$ on the second factor, thus preserving the connection,
\item a canonical point $id \in E_m \otimes E_m^\ast$ in the fiber at $m$.
\end{itemize}   
 Then the space $max(E \otimes E^{\ast}_m)$ of tensors of maximal rank is a $GL(E_{m})$-principal bundle endowed with a principal partial connection. From the action of $GL(E_{m})$ on $E_{m}$, we see that linear connections are $GL$-connections.

The Galois group obtained from a linear connection using this principal bundle and the minimal invariant subvariety $Q$ containing $id$  is called $Gal_m \nabla$. 

\subsection{Principal Bundle and Groupoids}

Given a $G$-principal bundle over $M$, $P \overset{\pi}{\rightarrow}M$, one obtains a groupo\"{\i}d $\G$ 
by taking the quotient $\G:= (P\times P)/G$ of the cartesian product by the diagonal action of $G$ (see \cite{mackenzie} for more details, the main example is described in sections \ref{frame} and \ref{malgrange}). The identity is the subvariety quotient of the diagonal in $P \times P$. 
From a $G$-principal connection $\nabla$ on $P$, one derives a connection $\nabla  \oplus  \nabla$ on the product $P \times P$ defined in a obvious way from the decomposition $T(M \times M) \underset{M \times M}{\times} (P\times P) = TM\underset{M}{\times} P \underset{P}{\oplus} TM\underset{M}{\times} P$. This connection is the so-called \emph{flows matrix equation}.

Let $G\subset H$ be an inclusion of algebraic groups  and $HP \to M$ and $H \nabla$ be an extension of the principal connection to $H$. One gets a groupo\"{\i}d inclusion  $\G \to \H$ such that  $(H\nabla \oplus H\nabla)|_{\G} =\nabla \oplus \nabla $.

\begin{remark}

The following claims are not used in this paper. They may help the reader to understand the links between the various definitions of differential Galois groups appearing in the literature \cite{bertrand,katz,pillay,cartier}.\begin{itemize}
\item The smallest algebraic subvariety of $\G$ which contains the identity and is $\nabla  \oplus  \nabla$-invariant is the Galois groupo\"{\i}d of $\nabla$. 
\item Its restriction above $\{x\}\times M \subset M \times M$ is the Picard-Vessiot extension pointed at $x \in M$.
\item Its restriction over the diagonal $M \subset M\times M$ is a $\D_{M}$-group bundle called the intrinsic Galois group of $\nabla$ in the
sense of Pillay \cite{pillay}. 
\end{itemize}
\end{remark}

\section{Variational Equations}

Various types of variational equations appear in the literature. 
For instance three of them appear in \cite{MRS}. 
 More precisely,  there are various way to obtain a linear system from the variational 
equation seen as an equation on germs of curves. 
In this paper, for the theoretical result, we consider the \emph{frame variational equation} (see below) as the 
principal connection coming from the variational equation. However, for practical calculations, one generally linearizes the variational equation.  

In this appendix, we give the definitions and the proofs needed to compare these different approaches. Some of these results can be found in \cite[propositions 8 to 12]{MRS}.

\subsection{Arc Bundles and the Variational Equation}  

This variational equation does not appear in the article of Morale-Ramis-Sim\'o  \cite{MRS}. 
It is used by several authors (e.g. \cite{BW03}) 
 as a perturbative variational equation.

The set of all parameterized curves on $M$ is denoted by $ CM = \{ c: \widehat{(\CC,0)} \to M \}$.
 It has a natural structure of pro-algebraic
variety. Let $\CC[M]$ be the coordinate ring of $M$ and $\CC[\delta]$ be the $\CC$-vector space of linear ordinary differential 
operators with constant coefficients. The coordinate ring of $CM$ is $Sym(\CC[M]\otimes \CC[\delta]) /L$ where 
\begin{itemize}
\item the tensor product is a tensor product of $\CC$-vector spaces,
\item $Sym( V )$ is the $\CC$-algebra generated by the vector space $V$, 
\item $Sym(\CC[M]\otimes \CC[\delta])$ has a structure of $\delta$-differential algebra {\it via}
the right composition of differential operators,
\item the Leibniz ideal $L$ is the $\delta$-ideal generated by $fg\otimes 1 - (f\otimes1)(g\otimes1)$ for all $(f,g) \in \CC[M]^2$.
 \end{itemize}

Local coordinates $(x_1, \ldots, x_n)$ on $M$ induce local coordinates on $CM$ {\it via} the Taylor expansion of curves $c$ at $0$:
$$
c(\epsilon) = \left( \sum c_1^{(k)} \frac{\epsilon^k}{k!}, \ldots,\sum c_n^{(k)} \frac{\epsilon^k}{k!}   \right).
$$ 
Let  $x_i^{(k)}: CM \to \CC$ be defined by $x_i^{(k)}(c) = c_i^{(k)}$. This function is the element $x_i\otimes \delta^k$ in 
$\CC[CM]$ and we have the following facts.

\begin{enumerate}
\item $\CC[CM]$ is the $\delta$-algebra generated by $\CC[M]$. The action of $\delta: \CC[CM] \to \CC[CM]$ can be written in
local coordinates and gives the total derivative operator $\sum_{i,k} x_i^{(k+1)} \frac{\partial}{\partial x_i^{(k)}}$.
\item Morphisms (resp. derivations) of $\CC[M]$ act on $\CC[CM]$ as morphisms (resp. derivations) {\it via} the first factor (it can be easily checked that the Leibniz ideal is preserved)
\item The vector space $\CC[\delta]$ is filtered by  the spaces $\CC[\delta]^{\leq k}$ of operators of order less than $k$. This gives a
filtration of $\CC[CM]$ by  $\CC$-algebras of finite type. 
\item These algebras are coordinate ring of the space of $k$-jet of parameterized curves $C_kM =\{j_k c \ |\ c \in CM\}$.
\item The action of $\delta$ has degree $+1$ with respect to the filtration.
\item  Prolongations of morphisms and derivations of $\CC[M]$ on $\CC[CM]$ have degree $0$.
\end{enumerate}

Set theoretically, the prolongations are obtained in the following way.
Any holomorphic map $\varphi: U \to V$ between open subset of $M$ can be prolonged on open sets $C_kU$ of $C_kM$ of curves through 
points in $U$ by $C_k\varphi: C_kU \to C_kV; j_kc \mapsto j_k(\varphi \circ c)$. One easily checks that 
$C_k(\varphi_1 \circ \varphi_2) = C_k\varphi_1 \circ C_k\varphi_2$.
This equality can be used to prolong holomorphic vector fields defined on open subsets $U \subset M$ by the infinitesimal generator of the local 1-parameter
group obtained by prolongation of the flow of $X$ {\it i.e.} $C_k(\exp(tX)) = \exp(tC_kX)$. 

When $X$ is a rational vector field on $M$, its prolongation $C_kX$ on $C_kM$ is also rational. 
Let $X = \sum a_i(x)\frac{\partial}{\partial x_i}$ be a vector field on $M$ in local coordinates. One gets 
$C_kX = \sum_{i,\ell \leq k} \delta^\ell(a_i)\frac{\partial}{\partial x_i^\ell}.$ 

If $\Fc$ is the foliation by integral curves of $X$ on $M$ then $C_{k}X$ defines a rational connection along $\Fc$ on $C_kM$: 
for  a  vector $V$ tangent to $\Fc$ at $x\in M$ with $X(x)\not = 0$ or $\infty$, one defines $\nabla_V(j_kc) = \frac{V}{X(x)} R_kX (j_kc)$. It is the 
\emph{order $k$ variational connection/equation} of X.

Usually, the variational equation is studied along a given integral curve of $X$: if $\Cc$ is an invariant curve and if 
$C_kM_\Cc$ is the subspace of $C_kM$ of curves through points in $\Cc$ the vector field $C_kX$ preserves $C_kM_\Cc$. 
Its restriction to $C_kM_\Cc$ is called \emph{the order $k$ variational connection/equation along $\Cc$}.

\subsection{Frame Bundles and the Frame Variational Equation}
\label{frame}
This variational equation is the one used in the theoretical part of \cite[\S 3.4]{MRS} as well as in \cite{casale-MR}.

The set of all formal frames on $M$ is denoted by $ RM = \{ r: \widehat{(\CC^n,0)} \to M | \det(Jac(r))\not = 0 \}$.
Like the arc spaces, it has a natural structure of pro-algebraic
variety. Let $\CC[\partial_1, \ldots, \partial_n]$ be the $\CC$-vector space of linear partial differential 
operators with constant coefficients. The coordinate ring of $RM$ is $\left(Sym(\CC[M]\otimes \CC[\partial_1,\ldots,\partial_n]) /L\right) [1/Jac]$ where 
\begin{itemize}
  \item the tensor product is a tensor product of $\CC$-vector spaces;
  \item $Sym( V )$ is the $\CC$-algebra generated by the vector space $V$;
  \item $Sym(\CC[M]\otimes \CC[\partial_1,\ldots, \partial_n])$ has a structure of $\CC[\partial_1,\ldots,\partial_n]$-differntial algebra {\it via} 
          the right composition of differential operators;
  \item the Leibniz ideal $L$ is the $\CC[\partial_1,\ldots, \partial_n]$-ideal generated by $fg\otimes 1 - (f\otimes1)(g\otimes1)$ for all $(f,g) \in \CC[M]^2$;
  \item the quotient is then localized by $Jac$ the sheaf of ideals (not differential !) generated by $\det\left(  [x_i \otimes \partial_j]_{i,j}\right)$
          for a transcendental basis $(x_1, \ldots, x_n)$ of $\CC(M)$ on a Zariski open subset of $M$ where such a basis is defined. 
 \end{itemize}
Local coordinates $(x_1, \ldots, x_n)$ on $M$ induce local coordinates on $RM$ {\it via} the Taylor expansion of maps $r$ at $0$:
$$
r(\epsilon_1\ldots, \epsilon_n) = \left( \sum r_1^{\alpha} \frac{\epsilon^\alpha}{\alpha!}, \ldots,\sum r_n^{\alpha} \frac{\epsilon^\alpha}{\alpha!}   \right).
$$ 
Let $x_i^{\alpha}: RM \to \CC$ be defined by $x_i^{\alpha}(r) = r_i^{\alpha}$. 
This function is the element $x_i\otimes \partial^\alpha$ in 
$\CC[RM]$.

\begin{enumerate}
\item The action of $\partial_j: \CC[RM] \to \CC[RM]$ can be written in
local coordinates and gives the total derivative operator $\sum_{i,\alpha} x_i^{\alpha + 1_j} \frac{\partial}{\partial x_i^{\alpha}}$.
\item We leave to the reader the translation of the properties from section B.1 in this multivariate situation.
\end{enumerate}
All the remarks we have made about arc spaces extend {\it mutatis mutantis} to the frame bundle. There is one important difference: 
$RM$ is a principal bundle over $M$. Let us describe this structure here.

The pro-algebraic group $$\Gamma = \{ \gamma: \widehat{(\CC^n,0)} \overset{\sim}{\rightarrow} \widehat{(\CC^n,0)}; \text{ formal invertible}\}$$ is the projective limit of groups $$\Gamma_k = \{ j_k \gamma | \gamma:(\CC^n,0) \overset{\sim}{\rightarrow} (\CC^n,0); \text{ holomorphic invertible}\}.$$ 
It acts on $RM$ and the map $ RM \times \Gamma  \to RM \underset{M}{\times} RM$ sending $(r,\gamma)$ to $(r,r\circ \gamma)$ is an isomorphism.
The action of $\gamma \in \Gamma$ on $RM$ is denoted by $S\gamma: RM \to RM$ as it acts as a change of
source coordinates of frames. At the coordinate ring level, this action is given by the action of formal change of coordinates on 
$\CC[\partial_1,\ldots,\partial_n]$ followed by the evaluation at $0$ in order to get an operator with constant coefficients.  
   This action has degree $0$ with respect to the filtration induced by the order of differential operators. 
   For any $k$, this means that the bundle of order $k$ frames $R_kM$ is a principal bundle over $M$ for the group $\Gamma_k$.

When $X$ is a rational vector field on $M$, its prolongation $R_kX$ on $R_kM$ is also rational. 
Let $X = \sum a_i(x)\frac{\partial}{\partial x_i}$ be a vector field on $M$ in local coordinates. One gets 
$R_kX = \sum_{i, |\alpha|\leq k} \partial^\alpha(a_i)\frac{\partial}{\partial x_i^\alpha}.$ 

If $\Fc$ is the foliation by integral curves of $X$ on $M$ then $R_{k}X$ defines a rational connection along $\Fc$ on $R_kM$. Moreover
the prolongation is defined by an action of the first factor on a tensor product whereas $\Gamma_k$ acts on the other factor.
These two actions commute, meaning that $R_kX$ is a $\Gamma_k$-principal connection along $\Fc$. It is the 
\emph{order $k$ frame variational connection/equation} of X.

As for variational equations, one can restrict this connection above an integral curve $\Cc$ of $X$: one gets
the \emph{the order $k$ frame variational connection/equation along $\Cc$}. This connection is a principal connection of a bundle
on $\Cc$.
After choosing a point $m\in \Cc$ where $X$ is defined, we obtains a Galois group $Gal_m(R_kX|_\Cc) \subset \Gamma_k$.

From a frame $r: \widehat{(\CC^n,0)} \to M$, one derives many parameterized curves: $CM$ is a $\Gamma$-bundle. More precisely: 
if $V_k$ denotes the vector space of $k$-jets of maps $\widehat{(\CC,0)} \to \widehat{(\CC^n,0)}$ then $C_{k}M = (R_{k}M \times V_{k})/\Gamma$.
The order $k$ variational connection is a $\Gamma_{k}$-connection.

\subsection{The Linearized Variational Equations}

The variational equations are usually given in the linearized form described in Section 1.2. In Morales-Ramis-Sim\'o , another linear variational equation
is introduced, using a faithfull linear representation of $\Gamma$. Let us recall these constructions and their relations with the frame variational
equations above.

\subsubsection{The Morales-Ramis-Sim\'o Linearization}
The theoretically easier linearization of variational equation is done through linearization of frame variational equations.
This is the approach followed by Morales-Ramis-Sim\'o. It is based on the fact that  
 $\Gamma_k$ is a linear group. Let $V_k$ be the set of order $k$ jets of map form
$(\CC^n,0) \to (\CC^n,0)$ without invertibility condition. Using coordinates on $(\CC^n,0)$ one can check that $V_k$ is a vector space 
(the addition depend on the choice of coordinates) and, using Faa di Bruno formulas, one can check that $(j_ks, j_k\gamma) \mapsto j_k(s\circ \gamma) $ defines a 
faithfull representation of $\Gamma_k$ on $V_k$. Then, from this inclusion $\Gamma_k \subset GL(V_k)$, one gets an extension of the principal variational equation
to the \emph{jet-linearized principal variational connection}.

\subsubsection{The Geometric Explanation of the Linearization}
The second linearization (see Section 1.) is done in the following way.
The coordinate ring of the arc space has a natural degree from the filtration. It is defined on generators of the algebra by 
$d^\circ(f\otimes\delta^i) = i$. The Leibniz rule implies that this the Leibniz ideal $L$ (see B.2) is generated by homogeneous elements and then the degree is well defined on $\CC[CM]$.

This degree gives a decomposition  $\CC[CM] = \oplus_{k} \E_{k}$ in subspaces of homogeneous functions of degree $k$ called jet differentials of degree $k$ \cite{GG}. It is a straightforward to verify the following properties:
\begin{itemize}
  \item $\E_{k}$ is a locally free $\CC[M]$-module of finite rank;
  \item if $\varphi: U \to V$ is a biholomorphism on open sets of $M$ then $C\varphi$, sending $\O_{V} \underset{\CC[M]}{\otimes} \CC[CM]$ to $\O_{U} \underset{\CC[M]}{\otimes} \CC[CM]$ preserves $\O_{M}\otimes \E_{k} $;
  \item if $X$ is an holomorphic vector field on a open set $U$ of $M$ then $\E_{k}$ is $CX$-invariant.
\end{itemize}
Now $X$ is a rational vector field on $M$ and  $E_{k}$ is the dual vector bundle of $\E_{k}$. 
From these properties, we find that $E_{k}$, endowed with the action of $X$ through $CX$, is a 
linear $\Fc$- connection. It is the \emph{degree-linearized order $k$ variational equation}.

The choice of an invariant curve $\Cc$ and a point $c \in \Cc$, such that $X_c$ is defined and not zero, will give the Galois group of the 
degree-linearized variational equation along $\Cc$ at $c$ denoted by $Gal_{c}(LV_{k}\Cc)$ (even though it depends on $X$).
 
The right composition with $\partial$ gives an inclusion $\E_{k} \to \E_{k+1}$ and thus a projection $Gal_{c}(LV_{k+1}\Cc)$ onto $Gal_{c}(LV_{k}\Cc)$.
The inductive limit of differential systems is denoted by $LV_\Cc$, it is the \emph{degree-linearized variational equation}. 
The projective limite of groups is denoted by $Gal_{c}(LV\Cc)$.

\begin{proposition}
\label{comparaison1}
  The Galois group of the degree-linearized variational equation is isomorphic to the Galois group of the frame variational equation.
\end{proposition}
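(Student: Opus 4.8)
The plan is to realize the degree-linearized order $k$ variational equation as the extension, along a \emph{faithful} linear representation $\rho_{k}$ of the structure group $\Gamma_{k}$, of the frame variational $\Gamma_{k}$-principal connection $R_{k}X$, and then to read off the equality of Galois groups from the compatibility of the Galois construction with extensions of the structure group established in Appendix A.

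First, fix a point $c\in\Cc$ at which $X$ is defined and nonzero, and make the description $C_{k}M=(R_{k}M\times V_{k})/\Gamma_{k}$ of Section B.2 explicit. Zariski-locally on $M$ the choice of a frame $r\in R_{k}M$ trivializes $C_{k}M$ as $M\times V_{k}$, a pair $(r,s)$ being sent to the arc $r\circ s$; by the Faa di Bruno formulas the arc-jet coordinates $x_{i}^{(p)}(r\circ s)$, $1\le p\le k$, are polynomials in the frame coordinates of $r$ and in the components of the Taylor coefficients $s^{(1)},\dots,s^{(p)}$ of $s$. Assigning $s^{(a)}$ the weight $a$, the same formulas show that $x_{i}^{(p)}(r\circ s)$ is weight-$p$ homogeneous in $s$, so the degree grading of $\CC[CM]$ used in Section B.3.2 corresponds, in this trivialization, to the weight grading of $\CC[V_{k}]$; and applying them to a change of frame $r\mapsto r\circ\gamma$, which replaces $s$ by $\gamma^{-1}\circ s$, shows that $(\gamma^{-1}\circ s)^{(p)}$ is again weight-$p$ homogeneous in $s$ and depends only on $j_{p}\gamma$. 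Hence the weight grading of $\CC[V_{k}]$ is $\Gamma_{k}$-stable, the degree-$k$ bundle $E_{k}$ (the dual of $\E_{k}$) is an associated vector bundle $(R_{k}M\times W)/\Gamma_{k}$ for a finite-dimensional $\Gamma_{k}$-module $W$, the action $\rho_{k}\colon\Gamma_{k}\to GL(W)$ being the one read off from these formulas, and the variational connection $CX$ on $E_{k}$ is exactly the extension of the $\Gamma_{k}$-principal connection $R_{k}X$ along $\rho_{k}$.

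Next I would check that $\rho_{k}$ is faithful: a change of frame by $\gamma$ acts on the coordinate functions $x_{i}^{(k)}$ by $x_{i}^{(k)}\mapsto(\gamma^{-1}\circ s)^{(k)}_{i}$, and the Faa di Bruno expansion of the right-hand side determines, one degree at a time, the linear, quadratic, \dots, and $k$-th order parts of $\gamma^{-1}$, hence $j_{k}\gamma$ itself; so $\rho_{k}(\gamma)=\mathrm{id}$ forces $\gamma=\mathrm{id}$. (Equivalently, $\rho_{k}$ contains the faithful Morales-Ramis-Sim\'o representation of Section B.3.1.) Restricting everything above $\Cc$, the degree-linearized order $k$ variational equation is thus the extension to $GL(W)$, along the faithful $\rho_{k}$, of the $\Gamma_{k}$-principal frame variational connection $R_{k}X|_{\Cc}$, and by the compatibility of the Galois construction with such extensions established in Appendix A (the minimal invariant subvariety defining the Galois group of a $G$-connection maps onto the one defining that of any extension $H\supseteq G$, with stabilizer the image of the original) we obtain
$$Gal_{c}(LV_{k}\Cc)=\rho_{k}\bigl(Gal_{c}(R_{k}X|_{\Cc})\bigr)\cong Gal_{c}(R_{k}X|_{\Cc}).$$
Finally, the inclusions $\E_{k}\hookrightarrow\E_{k+1}$ and the truncation projections $\Gamma_{k+1}\twoheadrightarrow\Gamma_{k}$ are compatible with the family $(\rho_{k})_{k}$, so passing to the projective limit over $k$ gives the asserted isomorphism between $Gal_{c}(LV\Cc)$ and the Galois group $Gal_{c}(RX|_{\Cc})$ of the frame variational equation.

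The step I expect to be the main obstacle is the first one: checking carefully, through the Faa di Bruno formulas, both that the degree grading of the algebra of jet differentials matches the weight grading of the frame-trivialized arc space and that this grading is $\Gamma_{k}$-stable, so that $E_{k}$ with its variational connection genuinely \emph{is} the associated bundle of $R_{k}X$ along $\rho_{k}$, and not merely a vector bundle that happens to carry a compatible $\Gamma_{k}$-structure. Once that identification and the faithfulness of $\rho_{k}$ are in hand, the comparison of Galois groups follows formally from the principal-bundle formalism of Appendix A.
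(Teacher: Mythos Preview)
Your proposal is correct and takes essentially the same approach as the paper: both recognize the degree-linearized variational equation as the extension of the frame variational $\Gamma_{k}$-principal connection along a faithful linear representation of the structure group, and deduce equality of Galois groups from the principal-bundle formalism of Appendix~A. The paper phrases the faithful inclusion as $Aut_{k}(M)_{c,c}\hookrightarrow GL(E_{k}(c))$ (groupoid language) rather than your $\rho_{k}\colon\Gamma_{k}\hookrightarrow GL(W)$, and handles the projective limit by introducing the filtration-preserving subgroups $T_{k}\subset GL(E_{k}(c))$ where you work directly on the $\Gamma_{k}$ side, but these are presentational rather than substantive differences.
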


Theproof will be given in the proofs section.

\subsection{The Covariational Equations}

This variational equation is the one used in \cite[p 861]{MRS} to linearize the variational equation.

The set of all formal functions on $M$ is denoted by $ FM = \{ f: \widehat{(M,m)} \to \widehat{(\CC,0)}\}$. Its structural ring is $\CC[FM] = Sym(\D^{\geq 1}_{M})$, the $\CC[M]$-algebra generated by the module of differential operators on $M$ generated as an operators algebra by derivations. From this definition, $FM$ is the vector bundle over $M$ dual of $\D^{\geq 1}_{M}$. It is a projective limit of $F_{k}M$ the bundle of $k$-jets of functions (the dual of operator of order less than $k$).

A vector field $X$ on $M$ acts on $\D^{\geq 1}_{M}$ by the commutator $P \mapsto [X,P]$ and this action preserves the order. This gives a linear $\Fc$-connection on each $F_{k}M$. This is the linearized variational equation of \cite{MRS}. 
In this paper, it will be called the \emph{covariational equation}. The following comparison result is proved in Appendix C.1 below:

\begin{proposition}
\label{comparaison2}
  The covariational equation of order $k$ and the variational equation of order $k$ have the same Galois group.
\end{proposition}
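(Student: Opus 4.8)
The plan is to realize \emph{both} the variational equation and the covariational equation of order $k$ along $\Cc$ as linear connections associated, through \emph{faithful} linear representations of the group $\Gamma_k$, to the \emph{single} $\Gamma_k$-principal connection furnished by the order $k$ frame variational equation $R_kX|_\Cc$ of Section \ref{frame}. The Galois group of a connection associated to a $\Gamma_k$-principal connection via a faithful representation $\Gamma_k\hookrightarrow GL(V)$ is canonically isomorphic to the Galois group $Gal_m(R_kX|_\Cc)\subset\Gamma_k$ of the principal connection itself (the minimal invariant subvariety upstairs maps to the minimal invariant subvariety in the associated bundle, and a faithful representation is injective on stabilizers); hence both Galois groups will be identified with $Gal_m(R_kX|_\Cc)$, and so with one another. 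For the variational side this identification is exactly Proposition \ref{comparaison1}, using the isomorphism $C_kM=(R_kM\times V_k)/\Gamma_k$ recalled in Section \ref{frame} and the faithfulness of the $\Gamma_k$-representation on $V_k$ from B.3.1. So the real task is to carry out the analogous description for the covariational equation.

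First I would recall that, by Section \ref{frame}, $R_kM|_\Cc\to\Cc$ is a principal bundle under $\Gamma_k$ and $R_kX|_\Cc$ a $\Gamma_k$-principal connection, with Galois group $Gal_m(R_kX|_\Cc)\subset\Gamma_k$ after fixing a base point $m\in\Cc$. Let $W_k$ be the vector space of $k$-jets at $0$ of formal functions $\widehat{(\CC^n,0)}\to\widehat{(\CC,0)}$, that is $\mathfrak{m}_0/\mathfrak{m}_0^{k+1}$; the group $\Gamma_k$ acts on $W_k$ by $\gamma\cdot\varphi=\varphi\circ\gamma^{-1}$, and this representation is faithful, since a $k$-jet of diffeomorphism is recovered from the way it transforms the linear coordinate functions and their products. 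I would then check that
$$F_kM=(R_kM\times W_k)/\Gamma_k,$$
the identification sending $[(r,\varphi)]$ to the jet of $\varphi\circ r^{-1}$ (a formal frame is formally invertible because $\det(\mathrm{Jac})\neq0$), and that under this identification the prolonged flow $R_k(\exp tX)\colon r\mapsto(\exp tX)\circ r$ on $R_kM$ induces on $F_kM$ exactly the transport $f\mapsto f\circ\exp(-tX)$ of function jets. Consequently the connection associated to $R_kX|_\Cc$ through $\Gamma_k\hookrightarrow GL(W_k)$ is a linear $\Fc$-connection on $F_kM|_\Cc$ whose horizontal transport is this flow-transport.

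It then remains to match this associated connection with the covariational equation as \emph{defined} in B.4, namely the dual of the $[X,\cdot]$-connection on the bundle of differential operators $(\mathcal{D}^{\geq 1}_M)^{\leq k}$. For this I would use the perfect pairing $(f,P)\mapsto (Pf)(m)$ between $F_kM|_m=\mathfrak{m}_m/\mathfrak{m}_m^{k+1}$ and $(\mathcal{D}^{\geq 1}_M)^{\leq k}|_m$, together with the Leibniz-type identity $X(Pf)=[X,P]f+P(Xf)$: this shows that, up to the usual sign conventions, the transport $f\mapsto f\circ\exp(-tX)$ on function jets and the transport $P\mapsto(\exp tX)_\ast P$ (whose infinitesimal generator is $[X,\cdot]$) on operator jets are adjoint to each other for this pairing. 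Hence the flow-transport connection on $F_kM$ is precisely the dual of the $[X,\cdot]$-connection, i.e. the covariational equation. This step is entirely parallel to the proof of Proposition \ref{comparaison1}, with the bundle of differential operators playing the role of the bundle of jet differentials. Combining everything: the Galois group of the covariational equation of order $k$ is isomorphic, through the faithful representation $W_k$, to $Gal_m(R_kX|_\Cc)$, which by Proposition \ref{comparaison1} is also the Galois group of the variational equation of order $k$; the two therefore coincide. The main obstacle is exactly this last matching step — verifying that the intrinsic, operator-theoretic definition of the covariational connection agrees with the associated-bundle connection on $R_kM$ — which requires careful bookkeeping of the duality between jets of functions and jets of differential operators, of the Leibniz rule, and of the conventions governing the $\Gamma_k$-action and the prolonged flow.
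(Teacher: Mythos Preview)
Your proposal is correct and lands on the same identification as the paper, but the route differs in one organizational choice. The paper introduces an intermediate \emph{coframe bundle} $R^{-1}M=\{f:\widehat{(M,m)}\to\widehat{(\CC^n,0)}\ \text{invertible}\}$, observes that $r\mapsto r^{-1}$ is an isomorphism of $\Gamma$-principal bundles conjugating the frame and coframe variational equations, and then writes $F_kM=(R^{-1}_kM\times F_k)/\Gamma_k$ as an associated bundle to the \emph{coframe} bundle, parallel to $C_kM=(R_kM\times C_k)/\Gamma_k$. You instead bypass the coframe bundle entirely and realize $F_kM$ directly as an associated bundle to $R_kM$ via the representation $\gamma\cdot\varphi=\varphi\circ\gamma^{-1}$ and the map $[(r,\varphi)]\mapsto\varphi\circ r^{-1}$; the inversion that the paper performs at the level of principal bundles is absorbed into your choice of representation. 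Both arguments are equivalent; the paper's version makes the geometric symmetry between frames and coframes more visible, while yours is more economical and, notably, is more explicit than the paper about the one point the paper leaves as ``compatible with the various variational equations'': you actually check, via the pairing with $\D_M^{\geq1}$ and the identity $X(Pf)=[X,P]f+P(Xf)$, that the associated-bundle connection coincides with the covariational connection defined by the commutator. That verification is the only place where care is needed, and your outline handles it correctly.
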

%


\subsection{Normal Variational and Normal Covariational Equations.}

When the vector field $X$ preserves a foliation $\Gc$ on $M$ then its prolongation $C_{k}X$ on the space $C_{k}M$ of $k$-jets of parameterized curves on $M$ preserves the subspace $C_{k}\Gc$ of curves contained in leaves of $\Gc$.
The subspace $C_{k}\Gc$ is an algebraic subvariety of $C_{k}M$ and the restriction of $C_{k}X$ on $C_{k}\Gc$ is the order $k$ variational equation tangent to $\Gc$. 
When $\Gc$ is generically transversal to the trajectories of $X$, this equation is called the \emph{order $k$ normal variational equation}. We don't know how this equation depends on the choice of such a foliation $\Gc$. However, in our situation of a vector field given by a differential equation, there is a canonical transversal foliation given by the levels of the independent variable.

Let $B$ be the curve with local coordinate $x$ and $\pi: M \dasharrow B$ be the phase space of a differential equation with independent variable $x$. The foliation $\Gc$ is given by the level subsets of $\pi$. Using local coordinate $x_{1}, \ldots, x_{n}$ on $M$ such that $x_{1}=x$, the subvariety $C_{k}\Gc \subset C_{k}M$ is described by the equations $x_{1}^\ell =0, 1\leq \ell\leq k$. The variational equation in local coordinate is the system (\ref{variational system}) page \pageref{variational system}. 
By setting $x_{1}^\ell =0, 1\leq \ell\leq k$ into this system, one gets the differential system for the normal variational equation. 

The linearization of the normal variational equation is done by the linearization of the variational equation. 
Let $I \subset \CC[C_{k}M]$ be the ideal defining the subvariety $C_{k}\Gc$. 
Then $\Ec_{k}\cap I \subset \Ec_{k} \subset \CC[C_{k}M] $ are finite rank linear spaces invariant under the action of $C_{k}X$. The induced action on the quotient $\Ec_{k} / (\Ec_{k}\cap I)$ is the \emph{linearized order $k$ normal variational equation}.\\

The normal covariational equation is more intrinsic. 
Let $F^X_{k} M \subset F_{k}M$ be the space of $k$-jets of first integrals $f$ of X, $f:\widehat{(M,x)} \to \widehat{(\CC,0)}$ such that $X.f =0$. It is a linear subspace defined by its annihilator $\D_{M}^{\geq 1}\cdot X \subset \D_{M}^{\geq 1}$. 
The commutator $P \to [X,P]$ preserves $\D_{M}^{\geq 1}\cdot X $. So,  it defines a linear connection on $F^X_{k} M$: 
this is the \emph{normal covariational equation}.

\section{The Proofs}

We recall the definitions and results of Casale in \cite{casale-irred} using the frame bundle $RM$ of $M$. It has a central place in the theory. 
In this section, it is used to present the Malgrange pseudogroup
and in the previous one it was used to have the variational equation in fundamental form. 

As it is a principal bundle, it has an associated groupo\"{\i}d: $Aut (M) = (RM \times RM)/\Gamma$. 
The $\Gamma$-orbit of a couple of frames $(r,s)$ is the set of all $(r\circ \gamma, s \circ \gamma)$ for $\gamma \in \Gamma$. 
It is characterized by the formal map $r \circ s^{-1}: \widehat{(M,s(0))} \to \widehat{(M,r(0))}$. The quotient $Aut (M)$ is the space of formal selfmaps on $M$ with its natural structure of groupo\"{\i}d. For an $m\in M$,  
we define $Aut (M)_{m,M}$ to be the space of maps with source at $m$ and target anywhere on $M$. The choice of a frame $r: \widehat{(\CC^n,0)} \to \widehat{(M,m)}$ gives a isomorphism between $Aut(M)_{m,M}$ and $RM$.

\subsection{Proofs of the Comparison Propositions}

\begin{proofof}{proposition \ref{comparaison1}}
We will first compare these variational equations for a fixed order, then study their projective limits.

In order to compare all the variational equations, we will need to look more carefully at the frame bundle. The proof is then just another way to write the properties of $\Ec_{k}$. Its second property says that we have a group inclusion $Aut_{k}(M)_{c,c} \to GL(E_{k}(c))$ and a compatible inclusion of principal bundles
  $$
    Aut_k(M)_{c,\Cc} \to E_{k}(\Cc)\otimes (E_{k}(c))^*.
  $$
This inclusion is compatible with the action of the vector field $X$. This means that the fundamental form of the order $k$ degree-linearized variational equation ({\it i.e.} $C_{k}X$ action on  $max\left(E_{k}(\Cc)\otimes (E_{k}(c))^*\right)$) is an extension of the frame variational equation. Thus, their Galois groups are the same.

The comparison of limit groups is not direct because the family $(GL(E_{k}(c))_{k}$ is not a projective system. 
The module $\E_{k}$ is filtered by 
$$
  \E_{0}\circ \delta^k \subset \E_{1} \circ \delta^{k-1} \subset \ldots \subset \E_{k}.
$$
Let $T_{k} \subset GL(E_{k}(c))$ denote the subgroup preserving this filtration. Now: 
\begin{itemize}
  \item there is a natural projection $T_{k} \to T_{k-1}$,
  \item the Galois group of the order $k$ degree-linearized variational equation is a subgroup of $T_{k}$,
  \item and the inclusion $Aut_{k}(M)_{c,c} \to T_{k}$ is compatible with the projections.
\end{itemize}
This proves the proposition.
\end{proofof}

\begin{proofof}{proposition \ref{comparaison2}}
There is a direct way to see that the variational equation and the covariational equation will have the same Galois group. Instead of using the Picard-Vessiot principal bundle for the covariational equation, one can build a better principal bundle. 
Consider the bundle of coframes  $R^{-1}M = \{ f: \widehat{(M,m)} \to \widehat{(\CC^n,0)} \text{ invertible}\}$ whose coordinate ring is $Sym(\D^{\geq 1}_{M}\otimes \CC^n)[1/jac]$. 
This is a $\Gamma$-principal bundle. The action of $X$ by the commutator defines a $\Gamma$-principal connection. This connection is called the \emph{coframe variational equation}. 
The map $R \to R^{-1}$ sending a frame $r$ on its inverse $r^{-1}$ is an isomorphism of principal bundles (up to changing the side of the group action) conjugating the frame and the coframe variational equations.

Now let $F_{k}$ be the vector space of $k$-jets of formal maps $\widehat{(\CC^n,0)} \to \widehat{(\CC,0)}$ and $C_{k}$ be the vector space of $k$-jets of formal maps   $\widehat{(\CC,0)} \to \widehat{(\CC^n,0)}$. 
One has $F_{k}M = (R^{-1}_{k}M \times F_{k})/\Gamma_{k}$ and $C_{k}M = (R_{k}M \times C_{k})/\Gamma_{k}$ . 
Moreover these two isomorphisms are compatible with the various variational equations. So, the Galois group of the covariational equation equals the one of the variational equation. 
\end{proofof}

\subsection{The Malgrange Pseudogroup} 
\label{malgrange}
The \emph{Malgrange pseudogroup of a vector field $X$ on $M$} is a subgroupo\"{\i}d of $Aut(M)$. 
We choose here to call it a pseudogroup as its elements are formal diffeomorphisms between formal neighbourhood of points of $M$ satisfying the definition of a pseudogroup, see \cite{casale-MR}. 

It is defined by means of differential invariants of $X$ {\it i.e.} rational functions $H \in \CC(RM)$ 
such that $RX \cdot H = 0$. Let $Inv(X) \subset \CC(RM)$ be the subfield of differential invariants of $X$. 
Let $W$ be a model for $Inv$ and $\pi: RM \dasharrow W$ be the dominant map from the inclusion $Inv \subset \CC(RM)$. Let $Mal(X)$ be $(RM \underset{W}{\times} RM)/\Gamma \subset AutM$. 
To define properly this fiber product, one needs to restrict $\pi: (RM)^o \to W$ on its domain of definition. Then,  $RM \underset{W}{\times} RM$ is defined to be the Zariski closure of  $(RM)^o \underset{W}{\times}(RM)^o$ in $RM \times RM$. By construction, any Taylor expansion of the flow of $X$ belongs to $Mal(X)$. 

Malgrange shows in  \cite{malgrange} that there exists a Zariski open subset $M^{o}$ of $M$ such that the restriction on $Mal(X)$ to $Aut(M^o)$ is a subgroupo\"{\i}d. This result was extended by Casale in \cite{casale-MR} and allows to view 
the Malgrange pseudogroup as a set-theoretical subgroupo\"{\i}d of $Aut(M)$. \\

From the Cartan classification of pseudogroups, \cite{cartan}, one gets the following theorem for rank two differential system, see the appendix of \cite{casale-P1} for a proof. 

\begin{theorem}[\cite{cartan,casale-P1}] \label{theoreme-casale-MR}
Let $M$ be a smooth irreducible algebraic $3$-fold over $\CC$ and $X$ be a rational vector field on $M$ such that there exist a closed rational $1$-form $\alpha$ with 
$\alpha(X) = 1$ and a closed rational $2$-form $\gamma$ with $\iota_X\gamma = 0$.
One of the following statement holds.
 \begin{itemize}
       \item On  a covering $\widetilde{M} \overset{\pi}{\rightarrow} M$ of a Zariski open subset of $M$, there exists a rational 1-form $\omega$ 
such that $\omega \wedge d\omega =0$ and $\omega(\pi^\ast X) = 0$.   
       Then $$Mal(X) \subset \{\varphi \ | \ \varphi^\ast \alpha = \alpha, (\widetilde{\varphi}^\ast\omega) \wedge \omega =0  \}$$ where $\widetilde{\varphi}$ stand for any lift of $\varphi$ to $\widetilde{M}$. The vector field is said to be 
\emph{transversally imprimitive}.

       \item There exists a vector of rational 1-forms $\Theta = \left( \begin{array}{c} \theta_1 \\ \theta_2 \end{array}\right)$ such that $\theta_1 \wedge \theta_2= \gamma$ and a trace free matrix of 1-forms $\Omega$ such that
       $$
       d\Theta = \Omega \wedge \Theta \ \ \text{ and } \ \  d\Omega = -\Omega \wedge \Omega.
       $$
        One has
       $Mal(X) \subset \{\varphi \ | \ \varphi^\ast \alpha = \alpha, \varphi^\ast\Theta = D\Theta \text{ and } dD = [D, \Omega]  \}$. The 
vector field is said to be \emph{transversally affine}.

       \item $Mal(X) =\{\varphi \ | \ \varphi^\ast \alpha = \alpha, \varphi^\ast\gamma = \gamma \}$.
    \end{itemize}

    \end{theorem}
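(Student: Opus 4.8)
The plan is to reduce the statement to \'E. Cartan's classification of Lie pseudogroups of the plane \cite{cartan}, applied to the \emph{transverse} pseudogroup of $X$; this is the argument carried out in the appendix of \cite{casale-P1}.

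First I would record the inclusion that always holds. The conditions ``$\varphi^\ast\alpha = \alpha$'' and ``$\varphi^\ast\gamma = \gamma$'' are rational differential equations on $\varphi$, hence cut out an algebraic sub-pseudogroup $\mathcal{G}_0$ of $Aut(M)$; the flow of $X$ lies in $\mathcal{G}_0$ because $L_X\alpha = d(\iota_X\alpha)+\iota_X d\alpha = 0$ and $L_X\gamma = d(\iota_X\gamma)+\iota_X d\gamma = 0$. Since $Mal(X)$ is the smallest algebraic pseudogroup whose solutions contain the germs of the flow of $X$, this already gives $Mal(X)\subseteq\{\varphi\mid\varphi^\ast\alpha=\alpha,\ \varphi^\ast\gamma=\gamma\}$, which is the $\subseteq$ half of the third bullet. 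Moreover $\gamma$ has rank $2$ and $X$ spans its kernel, so every $\varphi\in\mathcal{G}_0$ preserves $\ker\gamma$, hence the foliation $\Fc_X$. Thus $Mal(X)$ preserves $\Fc_X$ and descends, locally on a Zariski open set of $M$, to a pseudogroup $\overline{\mathcal{G}}$ of the leaf space $S$ (a surface), preserving the area form $\overline{\gamma}$ that $\gamma$ induces on $S$. The form $\alpha$, which trivializes the conormal of $\Fc_X$, rigidifies the reconstruction: an element of $\mathcal{G}_0$ is determined by its transverse germ, so it suffices to control $\overline{\mathcal{G}}$.

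The heart of the proof is then Cartan's classification. On the locus where $X$ has no rational first integral, $\overline{\mathcal{G}}$ is a transitive Lie pseudogroup of biholomorphisms of the surface $S$ preserving $\overline{\gamma}$; Cartan's list of sub-pseudogroups of the area-preserving pseudogroup shows that, up to the transverse analogue of a change of coordinates, $\overline{\mathcal{G}}$ is one of: (i) the full area-preserving pseudogroup; (ii) the area-preserving maps preserving in addition a foliation of $S$; (iii) the area-preserving maps preserving in addition a flat $SL_2$-affine structure on $S$. I would then invoke the (transverse) Galois correspondence of the Malgrange pseudogroup to promote the invariant tensors produced by Cartan, a priori only analytic, to rational data on $M$, or on a finite covering $\widetilde{M}$ of a Zariski open set of $M$; the covering is forced precisely in case (ii), because the $\overline{\mathcal{G}}$-invariant foliation of $S$ need only be invariant up to the holonomy of $\Fc_X$. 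The invariant foliation of case (ii) produces a rational $1$-form $\omega$ on $\widetilde{M}$ with $\omega\wedge d\omega = 0$ (Frobenius integrability) and $\omega(\pi^\ast X) = 0$ (the pulled-back foliation is a union of leaves of $\Fc_X$); the invariant affine structure of case (iii) produces a transverse coframe $\Theta = (\theta_1,\theta_2)^{T}$ with $\theta_1\wedge\theta_2 = \gamma$ and a trace-free connection matrix $\Omega$ (trace-free because the structure group is $SL_2$) satisfying $d\Theta = \Omega\wedge\Theta$ and $d\Omega = -\Omega\wedge\Omega$ (flatness of the affine structure).

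Finally I would read off the three conclusions. In case (i), lifting ``$\overline{\mathcal{G}}$ is the full area-preserving pseudogroup'' back through $\alpha$ gives $Mal(X)\supseteq\{\varphi\mid\varphi^\ast\alpha=\alpha,\ \varphi^\ast\gamma=\gamma\}$; together with the second paragraph this is the third bullet. In case (ii), since $Mal(X)$ preserves the foliation defined by $\omega$, any lift $\widetilde{\varphi}$ of $\varphi\in Mal(X)$ satisfies $\widetilde{\varphi}^\ast\omega\wedge\omega = 0$, which is the first bullet (transversally imprimitive). In case (iii), $Mal(X)$ preserves the affine structure, so each $\varphi$ acts by $\varphi^\ast\Theta = D\Theta$ with $D$ an $SL_2$-valued function and $dD = [D,\Omega]$ encoding compatibility with $\Omega$, which is the second bullet (transversally affine). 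The main obstacle is exactly this middle step: turning Cartan's local-analytic classification into the stated algebraic dichotomy on $M$. Two points need care --- the transitivity hypothesis (one restricts to a suitable Zariski open set, and treats separately the case where $X$ admits rational first integrals, a degenerate instance of (ii)), and rationality (Cartan yields only analytic invariant tensors, and one needs the Galois correspondence to see they descend to rational objects on $M$ or on a finite cover, which is why $\widetilde{M}$ is unavoidable). Once this is in place, tracking which transverse structure yields which closed form or flat connection, and checking the displayed identities, is routine.
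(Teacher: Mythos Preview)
The paper does not actually prove this theorem: it is quoted from \cite{casale-P1} (building on Cartan's classification \cite{cartan}), with the sentence ``see the appendix of \cite{casale-P1} for a proof'' immediately preceding the statement. So there is no in-paper proof to compare against; the theorem functions as a cited input to the proof of Theorem~\ref{main-theorem}.

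That said, your sketch is faithful to the strategy indicated by the citations: reduce to the transverse pseudogroup on a $2$-dimensional leaf space, invoke Cartan's classification of area-preserving Lie pseudogroups of the plane, and then use the Galois correspondence for the Malgrange pseudogroup to upgrade the local-analytic invariant structures to rational ones on $M$ (or a finite cover). You have also correctly flagged the two genuine subtleties --- transitivity on a Zariski open set and the rationality/descent step requiring $\widetilde{M}$ --- which are precisely the points where \cite{casale-P1} does the real work. As a roadmap this is accurate; just be aware that the paper treats this result as a black box rather than reproving it.
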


In order to compute dimensions of Malgrange pseudogroups and of Galois groups of variational equations, it will be easier to work with the Lie algebra of the Malgrange pseudogroup. Roughly speaking, $\mathfrak{mal}(X)$ is the sheaf of Lie algebra of vector fields whose flows belongs to $Mal(X)$. The reader is invited to read \cite{malgrange} for a formal definition.

\subsection{Proof of our Main Theorem \ref{main-theorem}.}

\begin{proof}
The assumptions made on $X$, $\alpha$ and $\gamma$ ensure that the first variational equation is reducible to a block-diagonal matrix with a first block in $SL_{2}(\CC)$ and a second equal to $[1]$.

Using theorems above, the proof of this theorem is reduced to the following  two lemmas.

\begin{lemma}
If there exists  a finite map $\pi: \widetilde{M} \to M$ and an integrable $1$-form on $\widetilde{M}$ 
vanishing on $\pi^\ast X$  then the Lie algebra of the Galois group of the first variational equation along any solution is solvable.
\end{lemma}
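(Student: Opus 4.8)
The plan is to use the integrable form to produce a one–dimensional invariant subspace inside the first variational equation along any $X$–invariant algebraic curve $\Cc$ (with $X_\Cc\not\equiv 0$), so that the corresponding differential Galois group is triangularisable and its Lie algebra is therefore solvable.

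First I would record the geometry on $\widetilde M$. Let $\omega$ be the given rational $1$–form, so $\omega\wedge d\omega=0$ and $\iota_{\pi^\ast X}\omega=0$; dividing $\omega$ by the greatest common factor of its coefficients I may assume its zero locus has codimension $\ge 2$. On the open set $\{\omega\neq0\}$ integrability yields $d\omega=\omega\wedge\eta$ for a rational $1$–form $\eta$, whence
$$\mathcal{L}_{\pi^\ast X}\omega=\iota_{\pi^\ast X}\,d\omega+d\,\iota_{\pi^\ast X}\omega=\iota_{\pi^\ast X}(\omega\wedge\eta)=-\,\eta(\pi^\ast X)\,\omega .$$
Thus the rational line field spanned by $\omega$ is preserved by the flow of $\pi^\ast X$; equivalently the codimension–one singular foliation $\Gc=\{\omega=0\}$ is invariant under that flow, and since $\iota_{\pi^\ast X}\omega=0$ its leaves contain the trajectories of $\pi^\ast X$. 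Quotienting the flag $\langle\pi^\ast X\rangle\subset T\Gc\subset T\widetilde M$ by $\langle\pi^\ast X\rangle$ produces a subbundle $\mathcal{S}:=T\Gc/\langle\pi^\ast X\rangle$ of rank one of the normal bundle $N=T\widetilde M/\langle\pi^\ast X\rangle$ of the $\pi^\ast X$–foliation, invariant under the connection on $N$ induced by $\mathcal{L}_{\pi^\ast X}$, i.e. under the transversal part of the frame variational equation.

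Next I would transfer this to the curve and conclude. Let $\widetilde\Cc$ be an irreducible component of $\pi^{-1}(\Cc)$; then $\pi|_{\widetilde\Cc}\colon\widetilde\Cc\to\Cc$ is finite, $\pi^\ast X$ is tangent to $\widetilde\Cc$ with $(\pi^\ast X)_{\widetilde\Cc}\not\equiv 0$, and the first normal variational equation of $\pi^\ast X$ along $\widetilde\Cc$ is the base change along $\CC(\Cc)\subset\CC(\widetilde\Cc)$ of the first normal variational equation of $X$ along $\Cc$. A finite base–field extension does not change the identity component of a differential Galois group, so these two groups have the same Lie algebra, and it suffices to bound the one over $\CC(\widetilde\Cc)$. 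Restricting $\mathcal{S}$ to the generic point of $\widetilde\Cc$ gives a one–dimensional submodule, defined over $\CC(\widetilde\Cc)$, of the differential module of $(NVE_{1})$. On the other hand $\pi^\ast\gamma$ is closed with $\iota_{\pi^\ast X}\pi^\ast\gamma=0$, hence descends to a horizontal volume form on $N|_{\widetilde\Cc}$, so the Galois group $G$ of $(NVE_{1})$ lies in $SL(N)\cong SL_{2}(\CC)$. A subgroup of $SL_{2}(\CC)$ fixing a line is contained, in a suitable basis, in the group of upper triangular matrices, whose Lie algebra is solvable; hence $\mathrm{Lie}(G)$ is solvable. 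Finally, by the reduction recalled at the start of the proof of the main theorem, $(VE_{1})$ is equivalent to a block–diagonal system with blocks $(NVE_{1})$ and $[1]$, so $\mathrm{Lie}(Gal(VE_{1}))=\mathrm{Lie}(Gal(NVE_{1}))$ is solvable, which is the assertion.

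The step I expect to be delicate — the main obstacle — is "restrict $\mathcal{S}$ to the generic point of $\widetilde\Cc$": although $\{\omega=0\}$ has codimension $\ge 2$ in $\widetilde M$, the curve $\widetilde\Cc$ may nevertheless lie in it (for instance be a component of it), in which case $\omega$, and hence $\mathcal{S}$, restricts to $0$ along $\widetilde\Cc$ and gives no invariant line. To handle this I would use that the Lie algebra of the differential Galois group of the variational equation along a curve is unaffected by a proper birational modification of $\widetilde M$ — it can be read off the intrinsic Galois group in the sense of Pillay \cite{pillay} — and pass to a resolution of $\{\omega=0\}$ carrying a curve that dominates $\Cc$ and meets the locus where $\Gc$ is regular; over that curve the invariant line of the preceding paragraph is genuinely defined and the argument applies verbatim. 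The remaining verifications (the local equation $d\omega=\omega\wedge\eta$, the invariance $\mathcal{L}_{\pi^\ast X}(T\Gc)\subset T\Gc$, and the invariance of the identity component of the Galois group under a finite base change) are routine.
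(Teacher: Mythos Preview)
Your overall strategy --- use the integrable form $\omega$ to cut out an $X$-invariant codimension-one foliation $\Gc$ and thereby a rank-one subbundle of the normal bundle along $\widetilde\Cc$ --- is natural and, when $\omega$ does not vanish identically along $\widetilde\Cc$, gives a clean proof.  You have also correctly located the single genuine obstacle.

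The gap is in your proposed fix for the degenerate case.  The claim that ``the Lie algebra of the differential Galois group of the variational equation along a curve is unaffected by a proper birational modification of $\widetilde M$'' is not correct as stated, and invoking Pillay's intrinsic Galois group does not rescue it.  Pillay's group is intrinsic to a \emph{given} linear connection; but if you blow up $\widetilde M$ along $\widetilde\Cc$ and replace $\widetilde\Cc$ by a curve $\widehat\Cc$ lying in the exceptional divisor $E$, the first normal variational equation of the lifted field along $\widehat\Cc$ is a \emph{different} linear system: the normal bundle $N_{\widehat\Cc/\widehat M}$ is not the pullback of $N_{\widetilde\Cc/\widetilde M}$ (the differential of the blow-down drops rank on it).  Worse, since $E$ is itself invariant under the lifted flow, the first normal variational equation along any such $\widehat\Cc\subset E$ \emph{automatically} has an invariant line $N_{\widehat\Cc/E}$, regardless of whether the original one does.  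So solvability upstairs carries no information about solvability downstairs, and the implication you need is simply not there.

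The paper circumvents the degeneration by a quite different and more elementary device, in the spirit of Ziglin and Audin.  It first descends to $M$ by replacing $\omega$ with the symmetric product $\overline\omega=\prod_i\omega_i\in\Sym^\ell\Omega^1_M$ of its Galois conjugates, a rational symmetric $\ell$-form on $M$ vanishing on $X$.  Then, even if $\overline\omega$ vanishes along $\Cc$, one takes its \emph{lowest-order homogeneous part} $\overline\omega_k$ in the normal directions: a short argument in flow-box coordinates shows that the vanishing order is constant on a Zariski open subset of $\Cc$ and that $\mathcal L_{C_1X}\overline\omega_k\wedge\overline\omega_k=0$.  This yields a nonzero $C_1X$-invariant homogeneous $\ell$-web in each fibre of the normal bundle; since $SL_2(\CC)$ preserves no such web on $\CC^2$, the Galois group is a proper (hence virtually solvable) subgroup of $SL_2$.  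Replacing your resolution step by this leading-term argument would close the gap.
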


\begin{proof}
This lemma is proved in the spirit of Ziglin and Morales-Ramis like Audin does in \cite{audin}. 

First we have to descend from the covering to $M$. Remark that $\omega$ is a $1$-form on $M$ whose coefficients are algebraic functions. 
Letting $\omega_{i}$, $i=1,\ldots, \ell$ be the conjugates of $\omega$, the product $\overline{\omega} = \prod \omega_{i} \in Sym^\ell \Omega^1(M)$ is a well defined rational symmetric form on $M$. 
For any holomorphic function $f$ on open subsets of $M$, $f\overline{\omega}$ satisfies all hypotheses so that one can assume that the rational symmetric form $\overline{\omega}$ is holomorphic at the generic point of $\Cc$. 

At generic $p\in \Cc$, the section $\overline{\omega}: M \dasharrow Sym^\ell T^\ast M$ vanishes at the order $k$ thus one can write $\overline{\omega} = \overline{\omega}_k(p) + \ldots$ where $\overline{\omega}_k(p)$ 
is lowest order homogeneous part of $\overline{\omega}$. 
It is a well defined symmetric form $\ell$ on the tangent space $T_pM$, i.e $\overline{\omega}_{k}(p): T_{p}M \to Sym^{\ell}T^{\ast}(T_{p}M)$; note that 
$\overline{\omega}_{k}(p) (\lambda v) (\mu w) = \lambda^k \mu^\ell \overline{\omega}_{k}(p) (v)$ for any $v \in TpM$ and any $w \in T_{v}(T_{p}M)$. We may say that $\overline{\omega}$ is a symmetric $\ell$-form on the space $T_{p}M$, homogeneous of degree $k$.

\begin{sublemma}
The vanishing order of $\overline{\omega}$ is constant on a Zariski open subset of $\Cc$.
\end{sublemma}

\begin{proof}
Let $p\in \Cc$ be such that $X(p) \not = 0$. One can choose rectifying coordinates $x_{1}, \ldots, x_{n}$ such that $x(p) = 0$, $X = \frac{\partial}{\partial x_{1}}$ and $\displaystyle \overline{\omega} = \sum_{\alpha \in \times\NN^{n-1} \atop |\alpha|=\ell}  w_{\alpha}(x) \prod_{i=2}^n dx_{i}^{\alpha_{i}}$. 
Since $\Lc_{X} \overline{\omega} \wedge \overline{\omega} = 0$, one has that, for any $c\in \CC$ small enough, $w_{\alpha}(x_{1}+c, \ldots, x_{n}) = f_{c}(x)w_{\alpha}(x_{1}, \ldots, x_{n})$ where $f_{c}$ is a holomorphic function depending on $c$ not on $\alpha$. Now $f_{0}=1$ thus $f_{c}(0)\not = 0$ for $c$ small enough and the vanishing order of $\omega$ at $0$ equals the one at $(c,0,\ldots,0)$.
\end{proof}

This sublemma enables us to define a rational section $\overline{\omega}_{k}: T_{\Cc}M \dasharrow Sym^{\ell}V^{\ast}(T_{\Cc}M)$ where $V^{\ast}(T_{\Cc}M) = T^{\ast}(T_{\Cc}M) /T^\ast \Cc$.

Remember that from $X$, we get a vector field $C_{1}X$ on $T_{\Cc}M$ called the first order variational equation along $\Cc$.

\begin{sublemma}
$\Lc_{C_{1}X}\overline{\omega}_{k} \wedge \overline{\omega}_{k} =0$
\end{sublemma}

\begin{proof}
Here again, we will prove it in local analytic coordinates. Let $p\in \Cc$ be such that $X(p) \not = 0$. One can choose rectifying 
coordinates $x_{1}, \ldots, x_{n}$ such that $x(p) = 0$, $X = \frac{\partial}{\partial x_{1}}$ and 
$\displaystyle \overline{\omega} = \sum_{\alpha \in \times\NN^{n-1} \atop |\alpha|=\ell}  w_{\alpha}(x) \prod_{i=2}^n dx_{i}^{\alpha_{i}}$. 
For any $c\in \CC$ small enough,  $w_{\alpha}(x_{1}+c, \ldots, x_{n}) = f_{c}(x)w_{\alpha}(x_{1}, \ldots, x_{n})$ 
so the zero set of $\overline{\omega}$ in $T_{\Cc}M$ is a subvariety  invariant under translations colinear to $X$. 
One can get local equations for this zero set in the form 
$\eta = \sum_{\alpha \in \times\NN^{n-1} \atop |\alpha|=\ell}  n_{\alpha}(x_{2},\ldots,x_{n}) \prod_{i=2}^n dx_{i}^{\alpha_{i}}$ 
and there exists a holomorphic $h$ such that $\overline{\omega} = h \eta$. Now, by taking the lowest order homogeneous parts, 
one gets $\overline{\omega}_{k} = h_{k_{1}} \eta_{k_{2}}$. Since $\eta$ is $x_{1}$-independent so is $\eta_{k_{2}}$. 
In local coordinate induced on $T_{\Cc}M$, $C_{1}X = \frac{\partial}{\partial x_{1}}$ then $\Lc_{C_{1}X} \eta_{k_{2}} = 0$ and a direct computation proves that
$\Lc_{C_{1}X}\overline{\omega}_{k} \wedge \overline{\omega}_{k} =0$.
\end{proof}

\begin{sublemma}
$Gal(C_{1}X)$ is virtually solvable.
\end{sublemma}

\begin{proof}
The rational form $\overline{\omega}_{k}$ defines  in each fiber of $T_\Cc M$ a homogeneous $\ell$-web. This fiberwise rational web is $C_{1}X$-invariant. This implies that the action of the Galois group on a fiber $T_p M$ must preserve this web. In other words, the Galois group at $p$ preserves the set of symmetric forms on $T_{p}M$ which are rational multiples of $\overline{\omega}_{k}(p)$. 

The form $\eta$ given in the previous sublemma shows that the web is a pull-back of a web defined on the normal bundle of $\Cc$ in $M$. The group $Gal(C_{1}X)$ is included in a block diagonal group with a block $(1)$ and a $2 \times 2$ block given by a subgroup of $SL(2,\CC)$.  As $SL(2,\CC)$ does not preserve a web on $\CC^2$, the $2 \times 2$ block is a proper subgroup of $SL(2,\CC)$. This proves the sublemma.

\end{proof} \end{proof}

\begin{lemma}
If $X$ is tranversally affine then the Galois group ot the formal variational equation along any solution has dimension smaller than $5$.
\end{lemma}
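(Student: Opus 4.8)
The plan is to exploit the rigidity of a transversally affine pseudogroup: I will show that the germs of its elements fixing a point of $\Cc$ form a group of dimension at most $4$, and that the Galois group of the formal variational equation along $\Cc$ embeds into that group; the bound follows at once.

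First I would unwind the hypothesis by means of Theorem~\ref{theoreme-casale-MR}. Transversal affineness gives $Mal(X)\subset\{\varphi\mid\varphi^\ast\alpha=\alpha,\ \varphi^\ast\Theta=D\Theta,\ dD=[D,\Omega]\}$ for a vector $\Theta=(\theta_1,\theta_2)^T$ of rational $1$-forms with $\theta_1\wedge\theta_2=\gamma$ and $d\Theta=\Omega\wedge\Theta$, and a trace-free matrix $\Omega$ of $1$-forms with $d\Omega=-\Omega\wedge\Omega$. Contracting $\theta_1\wedge\theta_2=\gamma$ with $X$ and using $\iota_X\gamma=0$, $\gamma\not\equiv 0$ gives $\theta_1(X)=\theta_2(X)=0$ and $\alpha\wedge\gamma\not\equiv0$, so that $(\alpha,\theta_1,\theta_2)$ is a rational coframe on a Zariski-dense open subset of $M$, dual to $X$ in its $\alpha$-component. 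I would also record that every $\varphi\in Mal(X)$ satisfies $\varphi^\ast\gamma=\gamma$ (because $\Lc_X\alpha=\Lc_X\gamma=0$ and $Mal(X)$ is the Zariski closure of the jets of the flow of $X$) and that $\varphi^\ast\gamma=(\det D)\,\gamma$, so $\det D\equiv1$; this last point only sharpens the final estimate and is not strictly needed.

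Next I would pass to adapted coordinates near a generic point $p$ of $\Cc$. Since $\alpha$ is closed with $\alpha(X)=1$, pick a coordinate $x$ with $\alpha=dx$ and $X=\partial/\partial x$. The identities $d\Theta=\Omega\wedge\Theta$ and $d\Omega=-\Omega\wedge\Omega$ say exactly that the affine Cartan connection $(\Theta,\Omega)$ transverse to $\Fc_X$ is flat, so --- formally along $\Cc$, or analytically near $p$ over the Picard--Vessiot extension of $(VE_1)$, which carries the monodromy of $\Omega$ --- there are affine transverse coordinates $(u,v)$ in which $\Theta=(du,dv)$ and $\Omega=0$; then $(x,u,v)$ are coordinates in which $X=\partial/\partial x$, $\Cc=\{u=v=0\}$ after translation, and the pseudogroup of the previous paragraph becomes the standard special-affine pseudogroup
$$\varphi(x,u,v)=\bigl(x+c,\ A\,(u,v)^T+b\bigr),\qquad c\in\CC,\ A\in SL_2(\CC)\text{ constant},\ b\in\CC^2 .$$
Then I would invoke Casale's comparison theorem \cite{casale-MR}: the Galois group of the $k$-th variational equation $(VE_k)$ of $X$ along $\Cc$ at $p$ embeds into the group of $k$-jets at $p$ of elements of $Mal(X)$ fixing a point of $\Cc$, so, taking the projective limit, the Galois group of the formal variational equation along $\Cc$ embeds into the subgroup of the displayed pseudogroup consisting of germs fixing a point of $\Cc$. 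Such a germ has $c=0$ and $b=0$ (the $\alpha$-condition rigidifies the $x$-direction, and fixing a point of $\{u=v=0\}$ kills $b$), hence is $(x,u,v)\mapsto(x,A\,(u,v)^T)$ with $A\in SL_2(\CC)$; this group is $SL_2(\CC)$, of dimension $3$ --- or $GL_2(\CC)$, of dimension $4$, if one drops the normalization $\det D\equiv1$ --- in any case of dimension $<5$, uniformly in $\Cc$.

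The step I expect to be the main obstacle is the passage to affine coordinates: the Cartan connection $(\Theta,\Omega)$ has nontrivial monodromy \emph{precisely} because $Gal(VE_1)$ is large (here $SL_2(\CC)$), so the flattening is only available over the Picard--Vessiot extension of the first variational equation, and one has to check that the dimension count survives this enlargement of the base field. One must also ensure that $\Cc$ meets the domain of definition of $\alpha$, $\Theta$, $\Omega$ --- automatic at a generic point of $\Cc$, the polar and indeterminacy loci of these rational forms being proper subvarieties --- and extract the comparison between the Galois group of the (frame) variational equation and jets of $Mal(X)$ from the frame-bundle formalism of Appendix~B.
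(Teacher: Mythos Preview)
Your strategy matches the paper's: invoke the comparison theorem from \cite{casale-MR} to embed the Galois group of the formal variational equation into the isotropy $Mal(X)^0_p$ of the Malgrange pseudogroup at a point $p\in\Cc$, then bound the latter using local coordinates adapted to the transversally affine structure, where the pseudogroup becomes the special affine group in the transverse directions crossed with translation in $x$.

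There is, however, a genuine gap in your handling of the singular locus. You assert that a generic point of $\Cc$ lies in the domain of definition of $\Theta,\Omega$, on the grounds that their polar loci are proper subvarieties of $M$. But $\Cc$ is a curve in a $3$-fold, and a curve can perfectly well be contained in a divisor: nothing prevents $\Theta$ or $\Omega$ from having a pole along \emph{all} of $\Cc$. In the Painlev\'e applications $\Cc$ is precisely the distinguished algebraic solution, so this is not a far-fetched worry. If that happens, your adapted coordinates $(x,u,v)$ do not exist near any point of $\Cc$, the translation putting $\Cc$ at $\{u=v=0\}$ is meaningless, and your sharp bound $\dim\leq 3$ (or $4$) collapses.

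The paper circumvents this by decoupling the point where the Galois group lives from the point where the dimension is computed. It first invokes a semicontinuity result (Lemma~3.8 of \cite{casale-MR}): for $p\in\Cc$, the dimension of $\mathfrak{mal}(X)_p$ is bounded above by $\dim\mathfrak{mal}(X)_q$ for $q$ \emph{generic in $M$}. At such a generic $q$ the forms $\alpha,\Theta,\Omega$ are holomorphic and one flattens the connection by a purely local analytic gauge $F$ with $dF+F\Omega=0$ (no Picard--Vessiot extension is needed --- this disposes of the obstacle you flagged as the main one), obtaining $\dim\mathfrak{mal}(X)_q\leq 6$. Hence $\dim\mathfrak{mal}(X)_p\leq 6$ as well, and since $X\in\mathfrak{mal}(X)_p\setminus\mathfrak{mal}(X)^0_p$ one gets $\dim\mathfrak{mal}(X)^0_p\leq 5$. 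Note that the resulting bound is weaker than yours: the semicontinuity transports only the dimension, not the group structure, so one cannot conclude that the isotropy at $p\in\Cc$ is still $SL_2(\CC)$; only the bound $\leq 5$ survives, which is exactly what the main theorem requires.
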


\begin{proof}
We will see that this lemma is a consequence of theorems and lemmas from \cite{casale-MR}. 
From theorem 2.4  of \cite{casale-MR}, the Galois group of the formal variational equation along $\Cc$ is a subgroup of $Mal(X)_{p} = \{\varphi: (M,p) \to (M,p) | \varphi \in Mal(X)\}$ for a generic $p\in \Cc$.  Its Lie algebra is included in $$\mathfrak{mal}(X)^0_{p} = \{Y \text{ vector field on } (M,p) | Y(p)=0 , Y \in \mathfrak{mal}(X)\}.$$

From Lemma 3.8 of \cite{casale-MR}, the dimension of $\mathfrak{mal}(X)_{p} = \{Y \text{ vector field on } (M,p) | Y \in \mathfrak{mal}(X)\}$ for $p\in \Cc$ is smaller that the dimension of the same Lie algebra for generic $p \in M$.

Assume $X$ is transversally affine and choose a point $p\in M$ such that the 1-form $\alpha$ and the forms $\Theta^{0}$,  
$\Theta^{1}$ from the definition are holomorphic and $\alpha \wedge \theta^0_{1}\wedge \theta^0_{2} \not =0$. 
Then one can choose local analytic coordinates such that $\alpha = dx_{1}$, and $\begin{bmatrix}d x_{2} \\ dx_{3} \end{bmatrix} = F \Theta^0$ with $dF + F\Theta^1 = 0$. 
In these coordinates, $\varphi \in Mal $ satisfies $\varphi^\ast \alpha = \alpha$, $\varphi^\ast\Theta^0 = D\Theta^0$  and $dD = [D, \Theta^1]$ if and only if 
$\varphi (x_{1}, x_{2}, x_{3}) = (x_{1}+ c_{0}, c_{1}x_{2}+c_{2}x_{3}+c_{3}, c_{4}x_{1}+ c_{5}x_{2} +c_{6})$
with $c\in \CC^7$ such that $\det \begin{bmatrix} c_{1} & c_{2} \\ c_{4} & c_{5} \end{bmatrix}=1$.
The infinitesimal version of these calculations shows that the dimension of $\mathfrak{mal}(X)_{p}$ is smaller that $6$. The Lie algebra $\mathfrak{mal}(X)^0_{p}$ is strictly smaller that  $\mathfrak{mal}(X)_{p}$ as it does not contains $X = \frac{\partial}{\partial x_{1}}$ so the dimension of the Galois group of the formal variational equation is smaller than $5$. 
\end{proof}

Combining these two lemmas, we see that theorem \ref{main-theorem} now follows from theorem \ref{theoreme-casale-MR}.
\end{proof}

 \end{document}